\newtheorem{thm}{Theorem}[section] 
\newtheorem{lem}[thm]{Lemma}  
\newtheorem{cor}[thm]{Corollary} 
\newtheorem{prop}[thm]{Proposition}  
\newtheorem{hyp}{Hypothesis}
\newtheoremstyle{named}{}{}{\itshape}{}{\bfseries}{.}{.5em}{\thmnote{#3's }#1}
\theoremstyle{named}
\theoremstyle{definition}
\newtheorem{rmk}{Remark}
\title{Stable Periodic Solutions to Lambda-Omega Lattice Dynamical Systems}
\author{Jason J. Bramburger\\ Division of Applied Mathematics\\ Brown University \\ Providence, Rhode Island 02912\\ USA}
\date{} 
\begin{document} 

\maketitle

\abstract{In this manuscript we consider the stability of periodic solutions to Lambda-Omega lattice dynamical systems. In particular, we show that an appropriate ansatz casts the lattice dynamical system as an infinite-dimensional fast-slow differential equation. In a neighbourhood of the periodic solution an invariant slow manifold is proven to exist, and that this slow manifold is uniformly exponentially attracting. The dynamics of solutions on the slow manifold become significantly more complicated and require a more delicate treatment. We present sufficient conditions to guarantee convergence on the slow manifold which is algebraic, as opposed to exponential, in the slow-time variable. Of particular interest to our work in this manuscript is the stability of a rotating wave solution, recently found to exist in the Lambda-Omega systems studied herein.}

\section{Introduction} 

At their most general a lattice dynamical system (LDS) takes the form
\begin{equation} \label{GeneralLattice}
	\dot{u}_{\xi} = g_{\xi}(\{u_{\zeta}\}_{\zeta\in\Lambda}),\quad \xi \in \Lambda. \\
\end{equation}
Here $\Lambda$ is a discrete subset of $\mathbb{R}^n$, commonly referred to as a lattice, and the $u_\xi = u_\xi(t)$ are time-dependent functions indexed by the lattice. We consider the variables $u_\xi$ for each $\xi \in \Lambda$ to be coordinates in the state vector $u = \{u_\xi\}_{\xi\in\Lambda}$ and $g_\xi$ a function on these coordinates which governs the flow of each $u_\xi$. Lattice models of the form (\ref{GeneralLattice}) have been shown to be well-suited in such areas as chemical reaction theory $\cite{Erneux,Erneux2}$, quantum mechanics $\cite{Kevrekidis1,Kevrekidis2,Temam}$, models of neural networks $\cite{Ermentrout,ErmentroutKopell2}$, optics $\cite{Firth}$ and material science \cite{Cahn,Cook}. In many of these applications one considers the elements $u_\xi$ to be interacting particles whose behaviour influences, and is influenced by, a finite subset of elements of the state vector $u = \{u_\xi\}_{\xi\in\Lambda}$. Despite spatially continuous models such as partial differential equations (PDEs) being the traditional means by which such situations are characterized mathematically, it appears that the discrete nature of the lattice model is better suited to reflect the discrete nature of the physical setting which the models are attempting to describe.  

In many interesting mathematical investigations of LDSs one typically considers the index set $\Lambda$ to be countably infinite, thus making (\ref{GeneralLattice}) an infinite system of coupled ordinary differential equations. This infinite-dimensional setting of course necessitates more abstract and technical analytical tools to investigate the behaviour of solutions to systems of the form (\ref{GeneralLattice}). In particular, the problem of determining local asymptotic stability of a known solution to (\ref{GeneralLattice}) greatly increases in complexity when moving from the finite-dimensional to the infinite-dimensional setting. In the case when $\Lambda = \mathbb{Z}$ some authors have circumvented this difficulty through the use of comparison principles to obtain stability of traveling waves solutions to LDSs \cite{Chen,Guo,Hsu,Ma,Zhang,ZinnerStable}. The problem with this method is that it requires a number of assumptions on the model and can only capture a specific class of initial conditions that converge back to the given solution. This was exactly the point made in \cite{Hoffman} where the authors examine the local asymptotic stability of traveling waves with $\Lambda = \mathbb{Z}^2$ in a more traditional dynamical systems context, which in turn is used to inform our present analysis.   

In this work we aim to continue the discussion of local asymptotic stability of solutions to LDSs on higher-dimensional lattices by investigating the stability of periodic solutions to the so-called Lambda-Omega system
\begin{equation} \label{IntroLDS} 
	\dot{z}_{i,j} = \alpha(z_{i+1,j} + z_{i-1,j} + z_{i,j+1} + z_{i,j-1} - 4z_{i,j}) + z_{i,j}[\lambda(|z_{i,j}|) + {\rm i}\omega(|z_{i,j}|,\alpha)], \quad (i,j) \in \mathbb{Z}^2,	
\end{equation} 
 which we write in terms of the time-dependent complex variables $z_{i,j} = z_{i,j}(t)$. Here $\alpha \geq 0$ is often referred to as the {\em coupling coefficient} and it describes the strength of interaction between neighbouring elements of the lattice. The inclusion of the terms $z_{i\pm1,j}$ and $z_{i,j\pm1}$ on the righthand side of (\ref{IntroLDS}) represents a local coupling over the two-dimensional lattice $\mathbb{Z}^2$, referred to as nearest-neighbour coupling. Such a coupling can be derived as a leading order approximation of the the typical five point discretization of the Laplacian differential operator, leading one to view system (\ref{IntroLDS}) as a spatially discretized reaction-diffusion PDE (see \cite[Section~1.1]{Bramburger2} for full details). When $\alpha \geq 0$ is taken to be a small parameter in the system, one may alternatively view (\ref{IntroLDS}) as an infinite system of weakly coupled oscillators, since when $\alpha = 0$ the $z_{i,j}$ act independently of each other. The specifics of the functions $\lambda$ and $\omega$ will be detailed later in this manuscript, but the important characteristic to consider at this point is that they only depend on the modulus of the complex variable $z_{i,j}$, endowing (\ref{IntroLDS}) with a rotating symmetry invariance. In this way, the choices for $\lambda$ and $\omega$ are meant to mimic the behaviour of the normal form of a Hopf bifurcation in the ordinary differential equation setting. 
 
Since their introduction by Howard and Kopell, Lambda-Omega PDEs have long been studied as an archetype for oscillatory behaviour in reaction-diffusion systems \cite{Kopell}. In particular, Lambda-Omega PDEs are presented as generalizations of the complex Ginzburg-Landau equation, which is well-known to be the truncated normal form of a reaction-diffusion equation undergoing a Hopf bifurcation \cite{Cohen}. To date Ginzburg-Landau PDEs have been shown to manifest themselves as the dominant leading order perturbation in a wide class of partial differential equations, including for example the Swift-Hohenberg equation, therefore testifying to the universality of these equations \cite{Melbourne}. Hence, Lambda-Omega systems should not be viewed as a specific model of a physical phenomenon, but as a paradigm for understanding periodic behaviour. Furthermore, investigations of solutions on the regular structure of the square lattice provide the mathematical community with significant insight into the dynamics of more general systems posed on more general lattices.  

From a mathematical point of view, many investigations have demonstrated that Lambda-Omega PDEs exhibit rotating wave solutions \cite{Cohen,Troy,Greenberg,Kopell2}, thus prompting the recent investigation into rotating wave solutions to LDSs of the type (\ref{IntroLDS}) \cite{Bramburger1,Bramburger2}. Hence, this work provides a natural follow-up investigation to \cite{Bramburger1,Bramburger2} by providing a series of sufficient conditions which can determine the stability of rotating wave solutions to (\ref{IntroLDS}). In this manuscript we dedicate a significant portion of the section on applications to discussing how these results should apply to rotating waves. The reason for this is that rotating waves have been an intense area of mathematical investigation for decades now, and this manuscript aims at continue that investigation within the framework of lattice dynamical systems. In the scientific literature, the study of spiral waves, which are a particular example of a rotating waves, dates back at least to the work of Winfree who observed spiral wave concentration patterns in chemical reactions \cite{Winfree}. Since Winfree's work spiral waves have been observed in a number of excitable systems ranging from retinal spreading depression, to fertilizing {\em Xenopus} oocyte calcium waves, and ventricular fibrillation $\cite{Beaumont,Gorelova,Huang,Cardiac,KeenerSneyd,Lechleiter,Cortical}$. With spiral waves abounding in nature, it follows that they remain an active area of investigation throughout the physical sciences.  

The importance of our focus on the stability of rotating wave solutions to (\ref{IntroLDS}) is that even in the PDE setting little is known about the stability of rotating waves. Therefore any insight into the stability of rotating waves should be of value to a wide variety of researchers due to their prevalence in chemical and biological systems. Numerical evidence and heuristic arguments in \cite{Hagan} appear to indicate that reaction-diffusion PDE should be able to exhibit asymptotically stable single-armed rotating waves, but this remains to be verified rigorously. Here we add to this mounting evidence by providing numerical simulations and heuristic arguments that indicate that rotating wave solutions should in fact satisfy the sufficient conditions for local asymptotic stability laid out in this manuscript. Moreover, the approach taken in this manuscript could influence future investigations of nonlinear stability of rotating waves in the spatially continuous setting of PDEs since we provide a method of extending linear stability to nonlinear stability while demonstrating how we lose some decay of the solutions due to the nonlinear terms.          

In this work we show that after introducing an appropriate ansatz into system (\ref{IntroLDS}) it can be interpreted as an infinite-dimensional fast-slow dynamical system when $0 \leq \alpha \ll 1$. The fast-slow nature of the resulting dynamical system requires the understanding of the asymptotic behaviour of solutions on two different time-scales: one fast and one slow. We show that an invariant manifold in the form of an infinite-dimensional torus persists for sufficiently small $\alpha \geq 0$, and that this invariant manifold is locally asymptotically stable with an exponential rate of decay. Solutions on the invariant manifold evolve in the slow time variable, leading to the nomenclature that the invariant manifold is a slow manifold. Upon reducing to the slow manifold we are able to extend previous methods presented in \cite{Bramburger3} to investigate the stability of solutions on the manifold. We find that on the slow manifold solutions decay back to equilibrium at an algebraic (as opposed to exponential) rate provided they start sufficiently close to that equilibrium. The notion of closeness is one that requires extra attention in this infinite-dimensional setting since it depends on the Banach space in which one is measuring distance. Hence, we pay special attention to describing the Banach spaces in which initial conditions belong to, as well as how solutions decay with respect to a variety of different norms.    

Although the analysis of this manuscript would be significantly eased by considered the Lambda-Omega system (\ref{IntroLDS}) on a finite-dimensional truncation of the integer lattice $\mathbb{Z}^2$, the goal of this manuscript is to discuss stability in the most generality possible. That is, it is entirely possible that one may provide similar asymptotic stability results on a finite lattice, but this work would most likely fail to discuss how stability changes as the size of the lattice increases. A common feature of increasing the size of the lattice, and in turn increasing the dimensionality of the phase space, is that the spectral gap of the linearization about an equilibrium shrinks to zero as the number of variables tends to infinity. In the context of PDEs this is most easily seen by inspecting the spectrum of the heat equation in appropriate Hilbert spaces on an infinite domain (all of the real nonpositive numbers) versus a finite domain (a discrete decreasing sequence of nonpositive numbers). This is a common feature of investigating PDEs on truncated domains \cite{SandstedeScheel} which leads one to believe that studying a finite-dimensional truncation of (\ref{IntroLDS}) would be insufficient to gain an understanding of the stability in general. Moreover, it is entirely possible that the truncation from infinite to finite could in turn stabilize otherwise unstable solutions. Hence, this investigation aims to understand the stability of periodic solutions of (\ref{IntroLDS}) in the absence of boundary conditions by posing the problem on an infinite lattice.  

The election to study the infinite-dimensional lattice dynamical system (\ref{IntroLDS}) is further motivated by the idea that studying LDSs obtained by discretizing PDEs leads to a greater understanding of the potentially more complicated PDE itself. One such example of this is that the atomization of continuous space to discrete space leading to (\ref{IntroLDS}) leads to a bounded coupling operator which displays similar semigroup properties to the unbounded Laplacian operator of a Lambda-Omega PDE. It is in this way that working with LDSs can slightly ease the analysis while also inform the behaviour of a related PDE.   

This paper is organized as follows. We begin with a discussion of the relevant Banach spaces which are used throughout, as well as present a series of results pertaining to a family of semi-norms which will be integral to this work. In Section~\ref{sec:Main} we detail the assumptions made on system (\ref{IntroLDS}) and provide the major results of this work. Prior to proving our main results of this work we provide a number of applications of our main theorems in Section~\ref{sec:Applications}. In particular, we discuss how these results should apply to the rotating wave solution found in \cite{Bramburger1,Bramburger2}. Following this discussion of the applications of this work, the entirety of Section~\ref{sec:InvMan} is dedicated to proving the existence and local asymptotic stability of the invariant slow manifold discussed above. Then, in Section~\ref{sec:Thm2Proof} we prove the local asymptotic stability of solutions on the slow manifold. Finally, this paper concludes with Section~\ref{sec:Discussion} which provides concluding remarks on the work undertaken in this paper, as well as details avenues for future work.

\section{Spatial Settings} \label{sec:SpatialSettings} 

Prior to presenting the main hypotheses and results of this work, we provide the following discussion regarding the appropriate spatial settings for solutions of (\ref{LambdaOmegaLDS}). To begin, the Banach spaces which will be of primary interest throughout this work will be sequence spaces indexed by a countably infinite index set $\mathbb{Z}^2$. In particular, our attention will be focussed on the spaces $\ell^p(\mathbb{Z}^2)$ with $p \in [1,\infty]$. They are defined as follows:
\begin{equation}\label{ellp}
	\ell^p(\mathbb{Z}^2) = \bigg\{x = \{x_{i,j}\}_{(i,j) \in \mathbb{Z}^2}:\ \sum_{(i,j) \in \mathbb{Z}^2} |x_{i,j}|^p < \infty\bigg\},
\end{equation}  
for all $p \in [1,\infty)$ and 
\begin{equation}
	\ell^\infty(\mathbb{Z}^2) = \bigg\{x = \{x_{i,j}\}_{(i,j) \in \mathbb{Z}^2}:\ \sup_{n \in \mathbb{Z}^2} |x_{i,j}| < \infty\bigg\}.
\end{equation}  
It is well-known that $\ell^p(\mathbb{Z}^2)$ is complete (and therefore a Banach space) under the norm
\begin{equation}
	\|x\|_p = \bigg(\sum_{(i,j) \in \mathbb{Z}^2} |x_{i,j}|^p\bigg)^\frac{1}{p},
\end{equation}
and similarly the norm associated to $\ell^\infty(\mathbb{Z}^2)$ is given by
\begin{equation} \label{InfNorm}
	\|x\|_\infty = \sup_{(i,j) \in \mathbb{Z}^2} |x_{i,j}|.
\end{equation}
Since the index set will always be $\mathbb{Z}^2$, for the ease of notation we will write $\ell^p(\mathbb{Z}^2)$ as $\ell^p$. It should be noted that $\ell^1 \subset \ell^p$ for every $p > 1$, and hence posing solutions to (\ref{LambdaOmegaLDS}) in $\ell^1$ will allow for the discussion of the behaviour of solutions with respect to all $\ell^p$ norms. Due to the subscript indices of elements in $\ell^p$, throughout this manuscript we will write initial conditions as $x^0 = \{x^0_{i,j}\}_{(i,j)\in\mathbb{Z}^2}$. This avoids the confusion created by using the traditional notation of $x_0$, where the meaning of the subscript could be ambiguous to the reader since $x_0$ could represent an element of $\ell^p$ or a single element of the sequence of an element in $\ell^p$.  

For convenience, throughout this manuscript we will introduce the shorthand
\begin{equation}\label{SumShorthand}
	\sum_{i',j'} (x_{i',j'} - x_{i,j}) := (x_{i+1,j} - x_{i,j}) + (x_{i-1,j} - x_{i,j}) + (x_{i,j+1} - x_{i,j}) + (x_{i,j-1} - x_{i,j}).
\end{equation}
Then, using this shorthand we will consider the real-valued functions on $\ell^p$ given as
\begin{equation}\label{Qp}
	Q_p(x) := \bigg(\sum_{(i,j)\in\mathbb{Z}^2}\sum_{(i',j')} |x_{i',j'} - x_{i,j}|^p\bigg)^\frac{1}{p},
\end{equation}
for all $x \in \ell^p$, $p \in [1,\infty)$. Similarly, define $Q_\infty$ as
\[
	Q_\infty(x) := \sup_{(i,j)\in\mathbb{Z}^2} \sum_{(i',j')} |x_{i',j'} - x_{i,j}|, 
\] 
for all $x \in \ell^\infty$. The terms $(x_{i',j'} - x_{i,j})$ can be interpreted as discrete directional derivatives along the horizontal and vertical directions of the lattice. Hence, the functions $Q_p$ can be understood as the discrete analogue of the norm of the gradient of a function in the Lesbesgue measure spaces. We now provide the following lemma to show that the functions $Q_p$ are indeed well-defined.

\begin{lem} \label{lem:NormBnds} 
	For every $1 \leq p \leq p' \leq \infty$ and $x \in \ell^p$ we have:
	\[
		Q_{p'}(x) \leq Q_p(x) \leq 8\|x\|_p.
	\]
\end{lem}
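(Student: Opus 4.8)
The plan is to handle the two inequalities by separate mechanisms, since they are of genuinely different character: the left-hand bound $Q_{p'}(x)\le Q_p(x)$ is an abstract monotonicity (``nesting'') statement for $\ell^p$-norms, whereas the right-hand bound $Q_p(x)\le 8\|x\|_p$ is a direct estimate built from the triangle inequality and the translation invariance of the $\ell^p$-norms.

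For the monotonicity I would first package the four directional differences at each site into a single nonnegative quantity, writing $S_{i,j}:=\sum_{(i',j')}|x_{i',j'}-x_{i,j}|$ for the total nearest-neighbour variation at $(i,j)$. With this notation the definition of $Q_\infty$ reads $Q_\infty(x)=\sup_{(i,j)}S_{i,j}=\|S\|_\infty$, and each finite $Q_p(x)$ is an $\ell^p$-aggregate of the sequence $\{S_{i,j}\}_{(i,j)\in\mathbb{Z}^2}$. The claim $Q_{p'}(x)\le Q_p(x)$ for $1\le p\le p'\le\infty$ is then the standard nesting $\|S\|_{p'}\le\|S\|_p$, which I would prove by the usual normalization: assuming $\|S\|_p=1$ forces each entry $S_{i,j}\le1$, hence $S_{i,j}^{p'}\le S_{i,j}^{p}$ for $p'\ge p$; summing and taking the $p'$-th root gives $\|S\|_{p'}\le1=\|S\|_p$, with the endpoint $p'=\infty$ covered by $\sup_{(i,j)}S_{i,j}\le\|S\|_p$. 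A useful by-product is that finiteness of $Q_p(x)$ automatically propagates to $Q_{p'}(x)$ for every larger exponent.

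For the upper bound I would estimate termwise, $|x_{i',j'}-x_{i,j}|\le|x_{i',j'}|+|x_{i,j}|$, so that $S_{i,j}$ is dominated by the four neighbouring moduli plus four copies of the central modulus $|x_{i,j}|$. Applying Minkowski's inequality in $\ell^p$ and using that each coordinate shift $x\mapsto\{x_{i\pm1,j}\}$, $x\mapsto\{x_{i,j\pm1}\}$ is an isometry of $\ell^p$, the four shifted sequences contribute $4\|x\|_p$ and the four central copies a further $4\|x\|_p$, combining to the constant $4+4=8$. A single nonzero entry shows the bound is sharp at $p=1$.

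The step I expect to be the main obstacle is not deep but delicate, and it lives at the endpoint $p'=\infty$: because $Q_\infty$ is defined as a supremum over sites of the local ($\ell^1$) variation $S_{i,j}$, the nesting inequality only passes cleanly through the endpoint if every $Q_p$ is consistently read as the $\ell^p$-norm of the single per-site variation sequence $\{S_{i,j}\}$, rather than as a norm of the undifferentiated array of directional differences. Keeping this aggregation consistent, and tracking how many times each lattice site enters the difference terms so as to land on the uniform constant $8$ for all $p$ at once, are the only points requiring genuine care; everything else is routine.
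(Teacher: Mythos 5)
Your proof of the right-hand bound $Q_p(x)\le 8\|x\|_p$ is essentially the paper's own argument: termwise triangle inequality, then Minkowski's inequality together with translation invariance of the $\ell^p$-norm, giving the constant $4+4=8$. The only caveat is that, as written, you bound the $\ell^p$-norm of the per-site sums $S_{i,j}$ rather than $Q_p(x)$ itself, so to conclude you also need the (true, but never stated in your proposal) inequality $Q_p(x)\le\|S\|_p$.

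The left-hand inequality is where there is a genuine gap: your argument rests on reading each finite $Q_p(x)$ as $\|S\|_p$, the $\ell^p$-norm of the sequence $S_{i,j}=\sum_{(i',j')}|x_{i',j'}-x_{i,j}|$. That is not the paper's definition. By (\ref{Qp}), the $p$-th power falls on each individual difference, $Q_p(x)^p=\sum_{(i,j)\in\mathbb{Z}^2}\sum_{(i',j')}|x_{i',j'}-x_{i,j}|^p$, and the two quantities satisfy only $Q_p(x)\le\|S\|_p\le 4Q_p(x)$ (the upper bound is Lemma~\ref{lem:AltGradient}), with both inequalities strict in general: for the sequence $x$ with $x_{0,0}=1$ and all other entries zero, $Q_2(x)=\sqrt{8}$ while $\|S\|_2=\sqrt{20}$. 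Consequently the nesting $\|S\|_{p'}\le\|S\|_p$, which you prove correctly, yields only $Q_{p'}(x)\le 4\,Q_p(x)$, not the stated inequality. For finite $p\le p'$ the paper's (correct) route is to run the same normalization argument directly on the doubly-indexed array of differences, viewed as an element of $\ell^p(\mathbb{Z}^2\times\{1,2,3,4\})$, with no per-site aggregation at all.

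Finally, the endpoint issue you flagged is real, but it cannot be fixed by ``consistently reading'' $Q_p$ as $\|S\|_p$ --- that proves a statement about different quantities from the ones defined and used in the paper. In fact, with the paper's definitions the endpoint case of the lemma is false: for the same $x$ as above, $Q_p(x)=8^{1/p}$ for every finite $p$ while $Q_\infty(x)=4$ (a supremum of per-site sums, not of individual differences), so $Q_\infty(x)\le Q_p(x)$ fails whenever $p>3/2$. The inequality $Q_{p'}\le Q_p$ is provable only for $p'<\infty$, or when $p=1$; what survives for all $1\le p\le p'\le\infty$ is the weaker bound $Q_{p'}\le 4Q_p$ that your method actually delivers. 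Note that the paper's own one-line proof of the left-hand inequality glosses over exactly this discrepancy.
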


\begin{proof}
	The proof of the left inequality follows in exactly the same way as showing that $\|x\|_{p'} \leq \|x\|_p$, and therefore we are left to prove the rightmost inequality. Begin by noting that a simple application of the triangle inequality gives
	\[
		 |x_{i',j'} - x_{i,j}| \leq |x_{i',j'}| + |x_{i,j}|,	
	\]	
	for every $(i,j)\in\mathbb{Z}^2$ and a nearest-neighbour $(i',j')$. Then, for $1\leq p < \infty$, the triangle inequality on the $\ell^p$ spaces imply that 
	\[
	\begin{split}
		Q_p(x) &\leq \bigg(\sum_{(i,j)\in\mathbb{Z}^2} |x_{i+1,j}|^p\bigg)^\frac{1}{p} + \bigg(\sum_{(i,j)\in\mathbb{Z}^2} |x_{i-1,j}|^p\bigg)^\frac{1}{p} + \bigg(\sum_{(i,j)\in\mathbb{Z}^2} |x_{i,j+1}|^p\bigg)^\frac{1}{p} \\ 
		&+ \bigg(\sum_{(i,j)\in\mathbb{Z}^2} |x_{i,j-1}|^p\bigg)^\frac{1}{p} + 4\bigg(\sum_{(i,j)\in\mathbb{Z}^2} |x_{i,j}|^p\bigg)^\frac{1}{p} \\
		&= 8\|x\|_p,   
	\end{split}
	\]
	as claimed. The case when $p = \infty$ follows in a similar way, and is omitted.
\end{proof} 

Throughout this work we will make use of the following simple bound.

\begin{lem}\label{lem:AltGradient} 
	For every $p \in [1,\infty)$ and $x \in \ell^p$ we have 
	\[
		\bigg(\sum_{(i,j)\in\mathbb{Z}^2}\bigg(\sum_{(i',j')} |x_{i',j'} - x_{i,j}|\bigg)^p\bigg)^\frac{1}{p} \leq 4Q_p(x).
	\]
\end{lem} 

\begin{proof}
	Begin by fixing some $p \in [1,\infty)$ and let $q \in [1,\infty)$ be its H\"older conjugate. Then, for all $x \in \ell^p$ H\"older's inequality gives
	\[
		\sum_{(i',j')} |x_{i',j'} - x_{i,j}| \leq 4^\frac{1}{q}\bigg(\sum_{(i',j')} |x_{i',j'} - x_{i,j}|^p\bigg)^\frac{1}{p} \leq 4\bigg(\sum_{(i',j')} |x_{i',j'} - x_{i,j}|^p\bigg)^\frac{1}{p},	
	\]
	uniformly in $(i,j)\in\mathbb{Z}^2$, since $q \geq 1$. The stated inequality now follows since
	\[
		\bigg(\sum_{(i,j)\in\mathbb{Z}^2}\bigg(\sum_{(i',j')} |x_{i',j'} - x_{i,j}|\bigg)^p\bigg)^\frac{1}{p} \leq 4\bigg(\sum_{(i,j)\in\mathbb{Z}^2}\sum_{(i',j')} |x_{i',j'} - x_{i,j}|^p\bigg)^\frac{1}{p} = 4Q_p(x).	
	\]
\end{proof} 

We note that the functions $Q_p$ should be interpreted as semi-norms since they annihilate constant sequences\footnote{In fact, one can easily prove that $Q_p(x) = 0$ if, and only if, $x$ is a constant sequence.}. However, since the constant sequences only belong to $\ell^\infty$, this will not pose a problem to our analysis since we primarily focus on elements in $\ell^1$, which does not include the constant sequences. Furthermore, we will see that it is indeed an understanding of the behaviour of solutions with respect to the $Q_p$ semi-norms that influences our understanding of the behaviour of solutions with respect to the $\ell^p$ norms. Hence, this work aims to convince the reader that a complete discussion of stability in (\ref{LambdaOmegaLDS}) requires the introduction of the $Q_p$ semi-norms. Having now done so, we are now able to present the hypotheses and main results of this work.

\section{Hypotheses and Main Results}\label{sec:Main} 

To begin, using the shorthand (\ref{SumShorthand}) we find that (\ref{IntroLDS}) can be written compactly as
\begin{equation}\label{LambdaOmegaLDS} 
	\dot{z}_{i,j} = \alpha\sum_{i',j'}(z_{i',j'} - z_{i,j}) + z_{i,j}[\lambda(|z_{i,j}|) + {\rm i}\omega(|z_{i,j}|,\alpha)], \quad (i,j) \in \mathbb{Z}^2.	
\end{equation}
We make the following hypothesis on the functions $\lambda$ and $\omega$ in the differential equation (\ref{LambdaOmegaLDS}).

\begin{hyp} \label{hyp:LambdaOmega} 
	The functions $\lambda$ and $\omega$ in $(\ref{LambdaOmegaLDS})$ satisfy the following:
	\begin{itemize}
	\item[{\rm(1)}] $\lambda : [0,\infty) \to \mathbb{R}$ is continuously differentiable and there exists some $a> 0$, with the property that $\lambda(a) = 0$ and $\lambda'(a) < 0$.
	\item[{\rm(2)}] $\omega = \omega(R,\alpha)$ is of the form
	\begin{equation}
		\omega(R,\alpha) = \omega_0(\alpha) + \alpha \omega_1(R,\alpha),
	\end{equation} 
	for some function $\omega_0:\mathbb{R} \to \mathbb{R}$ and $\omega_1(R,\alpha): [0,\infty) \times \mathbb{R} \to \mathbb{R}$ twice continuously differentiable with $\omega_1(a,\alpha) = 0$ for all $\alpha \in \mathbb{R}$. 
\end{itemize}
\end{hyp} 

These conditions are based upon the assumptions laid out in the first demonstration of the existence of rotating waves in the continuous spatial setting \cite{Cohen,Greenberg}. Furthermore, conditions of this form were used in \cite{Bramburger2} to demonstrate the existence of rotating wave solutions to equation (\ref{LambdaOmegaLDS}), which serve as a major motivation for the analysis herein. We note that the assumptions to demonstrate the existence of rotating wave solutions to (\ref{LambdaOmegaLDS}) only required that $\lambda'(a) \neq 0$, but now to demonstrate asymptotic stability of solutions to (\ref{LambdaOmegaLDS}) we require $\lambda'(a) < 0$. To understand why this is the case, begin by setting $\alpha = 0$ in (\ref{LambdaOmegaLDS}) to arrive at the infinite system of uncoupled equations
\[
	\dot{z}_{i,j} = z_{i,j}[\lambda(|z_{i,j}|) + {\rm i}\omega_0(|z_{i,j}|,0)],	
\]
for all $(i,j)\in\mathbb{Z}^2$. Decomposing each $z_{i,j}$ into polar variables using the ansatz 
\begin{equation}
	z_{i,j}(t) = r_{i,j}(t)e^{{\rm i}\theta_{i,j}(t)}
\end{equation}
results in the set of ordinary differential equations
\begin{equation} \label{PolarUncoupled}
	\begin{split}
	\begin{aligned}
		&\dot{r}_{i,j} = r_{i,j}\lambda(r_{i,j}), \\
		&\dot{\theta}_{i,j} = \omega_0(0),
	\end{aligned}
	\end{split}
\end{equation}
for each $(i,j)\in\mathbb{Z}^2$. Taking $r_{i,j} = a$ leads to a periodic solution of the form
\begin{equation} \label{UncoupledPeriodicSoln}
	z_{i,j}(t) = ae^{{\rm i}(\omega_0(0)t + \theta_{i,j}^0)},
\end{equation}
where $\theta_{i,j}^0 \in S^1$ is an initial phase value for each $(i,j) \in \mathbb{Z}^2$. Hence, assuming $\lambda'(a) \neq 0$, each periodic solution of the form (\ref{UncoupledPeriodicSoln}) to (\ref{PolarUncoupled}) falls into one of two categories: locally attracting when $\lambda'(a) < 0$ and locally repelling when $\lambda'(a) > 0$. Since we are interested in the stability of solutions to (\ref{LambdaOmegaLDS}) with $\alpha$ in a connected neighbourhood to the right of $\alpha = 0$, we therefore must focus on the case $\lambda'(a) < 0$. 

Turning now to the function $\omega$, we first note that many applications simply work with $\omega$ as a constant function (or at least independent of its first argument). We will see in the coming sections that our nonlinear stability result relies on having $\omega$ be independent of its first argument, but we take this time to comment on what should be expected of nonlinear $\omega$ functions. When extending to non-constant $\omega$ functions we find that similar work exploring rotating waves in Lambda-Omega systems on finite lattices have considered functions $\omega$ to be slight perturbations off of a constant function \cite{ErmentroutLambdaOmega}. We see that indeed this is the case when $\alpha \geq 0$ is taken to be a small parameter in the system, which therefore measures the deviation of $\omega$ from a constant function. Furthermore, the condition $\omega_1(a,\alpha) = 0$ implies that 
\[
	\omega_1(R,\alpha) = \mathcal{O}(|R-a|),
\]   
simply using Taylor's Theorem. This condition is weaker than that which was assumed by Cohen et. al. in their proof of existence of spiral waves in the spatially continuous reaction-diffusion setting \cite{Cohen}. Particularly, they assumed the H\"older regularity condition 
\[
	\omega_1(R,\alpha) = \mathcal{O}(|R-a|^{1 + \mu}), 	
\]  
for some $\mu > 0$. Hence, we will see that our present investigation allows for a slightly larger class of functions $\omega$ to be considered for the first major result detailing the existence of an exponentially attracting invariant manifold, but we have to reduce ourselves to the constant $\omega$ case to present our nonlinear stability result.

To analyze the full system (\ref{LambdaOmegaLDS}) we follow in a similar way in which we inspected the uncoupled system (\ref{PolarUncoupled}) above and introduce the polar decomposition 
\begin{equation} \label{PolarAnsatz}
	z_{i,j} = r_{i,j}e^{{\rm i}(\omega_0(\alpha) t + \theta_{i,j})}, 
\end{equation}	
with $r_{i,j} =r_{i,j}(t)$ and $\theta_{i,j} = \theta_{i,j}(t)$ for each $(i,j)\in\mathbb{Z}^2$. Then the LDS ($\ref{LambdaOmegaLDS}$) can now be written in polar form as
\begin{equation} \label{FullPolarLattice}
	\begin{split}
		&\dot{r}_{i,j} = \alpha\sum_{i',j'} (r_{i',j'}\cos(\theta_{i',j'} - \theta_{i,j}) - r_{i,j}) + r_{i,j}\lambda(r_{i,j}), \\
		&\dot{\theta}_{i,j} =  \alpha\sum_{i',j'} \frac{r_{i',j'}}{r_{i,j}}\sin(\theta_{i',j'} - \theta_{i,j}) + \alpha\omega_1(r_{i,j},\alpha), \ \ \ (i,j) \in \mathbb{Z}^2.
	\end{split}
\end{equation}  
Note that for $0 \leq \alpha \ll 1$ the phase components become singularly perturbed, hence giving that in the small $\alpha > 0$ limit this polar decomposition is of the form of a fast-slow system of ordinary differential equations. We now make the following assumption.

\begin{hyp}\label{hyp:PolarSoln} 
	There exists $\alpha^* > 0$ such that for all $\alpha \in [0,\alpha^*]$ there exists a steady-state solution, denoted $\{\bar{r}_{i,j}(\alpha),\bar{\theta}_{i,j}(\alpha)\}_{(i,j)\in\mathbb{Z}^2}$, to (\ref{FullPolarLattice}). That is, 
	\[
		\begin{split}
			&0 = \alpha\sum_{i',j'} (\bar{r}_{i',j'}(\alpha)\cos(\theta_{i',j'} - \theta_{i,j}) - \bar{r}_{i,j}(\alpha)) +\bar{r}_{i,j}(\alpha)\lambda(\bar{r}_{i,j}(\alpha)), \\
			&0 =  \sum_{i',j'} \frac{\bar{r}_{i',j'}(\alpha)}{\bar{r}_{i,j}(\alpha)}\sin(\bar{\theta}_{i',j'}(\alpha) - \bar{\theta}_{i,j}(\alpha)) + \omega_1(\bar{r}_{i,j}(\alpha),\alpha),
		\end{split}	
	\]
	for all $\alpha \in [0,\alpha^*]$. Furthermore, $\bar{r}_{i,j}(0) = a$ and there exists a constant $C_r > 0$ such that  
	\begin{equation}\label{RotWaveLip}
		|\bar{r}_{i,j}(\alpha) - a| \leq C_r\alpha	
	\end{equation}
\end{hyp} 

\begin{rmk}
	Without loss of generality we will restrict $\alpha^* > 0$ such that $|\bar{r}_{i,j}(\alpha) - a| \leq \frac{a}{2}$ for all $(i,j)\in\mathbb{Z}^2$ using (\ref{RotWaveLip}). This will allow our analysis to be bounded away from the singularity at $r_{i,j}(\alpha) = 0$ in the differential equation for $\theta_{i,j}$ in (\ref{FullPolarLattice}). 
\end{rmk}

One sees that Hypothesis~\ref{hyp:PolarSoln} results in a periodic solution $\{z_{i,j}(t;\alpha)\}_{(i,j)\in\mathbb{Z}^2}$ of the form 
\[
	z_{i,j}(t;\alpha) = \bar{r}_{i,j}(\alpha)e^{{\rm i}(\omega_0(\alpha) t + \bar{\theta}_{i,j}(\alpha))},
\] 
for all $(i,j)\in\mathbb{Z}^2$. Here we see that each element of the lattice is oscillating with a frequency of $2\pi/\omega_0(\alpha)$, but potentially differs through the amplitude of its oscillation, $\bar{r}_{i,j}(\alpha)$, and/or a phase-lag, $\bar{\theta}_{i,j}(\alpha)$. Of course there is a trivial choice for a solution satisfying Hypothesis~\ref{hyp:PolarSoln} given by $\bar{r}_{i,j}(\alpha) = a$ and $\bar{\theta}_{i,j}(\alpha) = 0$ for all $(i,j) \in \mathbb{Z}^2$ and $\alpha \geq 0$. Aside from this trivial solution, it was shown in \cite{Bramburger1,Bramburger2} that there also exists a rotating wave solution satisfying Hypothesis~\ref{hyp:PolarSoln} for sufficiently small $\alpha \geq 0$. On top of these two solutions, the methods employed to obtain the rotating wave solution can be easily extended to obtain various other steady-states of (\ref{FullPolarLattice}) which satisfy Hypothesis~\ref{hyp:PolarSoln}, leading to oscillatory solutions of (\ref{LambdaOmegaLDS}). 

To properly state our results we centre system (\ref{FullPolarLattice}) at the steady-state $(\bar{r}(\alpha),\bar{\theta}(\alpha))$.  Let us introduce the changes of variable given by 
\begin{equation}
	r_{i,j} = \bar{r}_{i,j}(\alpha) + s_{i,j}, \ \ \ \ \ \theta_{i,j} = \bar{\theta}_{i,j}(\alpha) + \psi_{i,j}, 
\end{equation}
for all $(i,j)\in\mathbb{Z}^2$. Letting $s = \{s_{i,j}\}_{(i,j)\in\mathbb{Z}^2}$ and $\psi = \{\psi_{i,j}\}_{(i,j)\in\mathbb{Z}^2}$, we define the function 
\[
	F(s,\psi,\alpha) = \{F_{i,j}(s,\psi,\alpha)\}_{(i,j)\in\mathbb{Z}^2}
\] 
to describe the radial part as
\begin{equation}\label{Fdefn}
	\begin{split}
	F_{i,j}(s,\psi,\alpha) =\alpha&\sum_{i',j'} [(\bar{r}_{i',j'}(\alpha) + s_{i',j'})\cos(\bar{\theta}_{i',j'}(\alpha) + \psi_{i',j'} - \bar{\theta}_{i,j}(\alpha) - \psi_{i,j}) - (\bar{r}_{i,j}(\alpha)+ s_{i,j})\bigg] \\
	&+ (\bar{r}_{i,j}(\alpha) + s_{i,j})\lambda(\bar{r}_{i,j}(\alpha) + s_{i,j}),
	\end{split}
\end{equation}
so that we have $F(0,0,\alpha) = 0$ for all $\alpha \in [0,\alpha^*]$. Furthermore, we define the function 
\[
	G(s,\psi,\alpha) = \{G_{i,j}(s,\psi,\alpha)\}_{(i,j)\in\mathbb{Z}^2}
\] 
to describe the phase components as
\[
	G_{i,j}(s,\psi,\alpha) = \sum_{i',j'}\bigg[\bigg(\frac{\bar{r}_{i',j'}(\alpha) + s_{i',j'}}{\bar{r}_{i,j}(\alpha) + s_{i,j}}\bigg)\sin(\bar{\theta}_{i',j'} + \psi_{i',j'} - \bar{\theta}_{i,j} - \psi_{i,j})\bigg] + \omega_1(\bar{r}_{i,j}(\alpha) + s_{i,j},\alpha),
\] 
so that $G(0,0,\alpha) = 0$ for all $\alpha \in [0,\alpha^*]$. This therefore leads to the system which we will focus on throughout this work
\begin{equation} \label{FullODE}
	\begin{split}
		&\dot{s} = F(s,\psi,\alpha), \\
		&\dot{\psi} = \alpha G(s,\psi,\alpha).
	\end{split}
\end{equation}

Written in the form (\ref{FullODE}) it becomes easier to identify that when $0 < \alpha \ll 1$ our system resembles a fast-slow dynamical system, with the major caveat that we are working in infinite dimensions. Our approach to describing the behaviour of solutions to (\ref{FullODE}) will be motivated by the study of finite-dimensional fast-slow dynamical systems, but now the infinite-dimensionality of the problem requires one to be more careful since the notion of distance is dependent on the underlying infinite dimensional phase space in which the solutions exist in. The first major result of this work describes the persistence of an infinite-dimensional invariant manifold in (\ref{FullODE}), for which the proof is left to Section~\ref{sec:InvMan}.

\begin{thm}\label{thm:InvMan} 
	Assume Hypothesis~\ref{hyp:LambdaOmega} and \ref{hyp:PolarSoln}. Then, there exists $\alpha_0 \in (0,\alpha^*]$ such that the following is true: there exists a function 
	\[
		\sigma:\ell^1 \times [0,\alpha_0] \to \ell^1
	\] 
	such that the set 
	\[
		\{(s,\psi):\ s = \sigma(\psi,\alpha), \psi \in \ell^1\}
	\]	
	is an invariant manifold of system (\ref{FullODE}) for all $\alpha \in [0,\alpha_0]$. The function $\sigma$ satisfies the following properties:
	\begin{equation}\label{ManifoldProperties}
		\begin{split}
			\sigma(0,\alpha) &= 0, \\
			\sup_{\psi\in\ell^1} \|\sigma(\psi,\alpha)\|_\infty &\leq \sqrt{\alpha}, \\
			\|\sigma(\psi,\alpha) - \sigma(\tilde{\psi},\alpha)\|_p &\leq \sqrt{\alpha} Q_p(\psi - \tilde{\psi}),
		\end{split}
	\end{equation}
	for all $p \in [1,\infty]$, $\psi,\tilde{\psi} \in \ell^1$, and $\alpha \in [0,\alpha_0]$. This invariant manifold is asymptotically exponentially stable. That is, there exists $\delta^*,\beta > 0$ such that for all $\delta \in (0,\delta^*]$, if 
	\[
		\|s(0) - \sigma(\psi(0),\alpha)\|_1\leq \delta,
	\] 
	then 
	\[
		\|s(t)-\sigma(\psi(t),\alpha)\|_1 \leq 2\delta e^{-\beta t}
	\] 
	for all $t\geq 0$ and $\alpha \in [0,\alpha_0]$. 
\end{thm}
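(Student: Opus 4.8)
The plan is to prove the existence of the invariant manifold and its stability using the graph transform (Hadamard) method, adapted to the infinite-dimensional fast-slow setting. The central observation is that in system (\ref{FullODE}) the radial equation $\dot{s} = F(s,\psi,\alpha)$ has a linear part dominated by the term $(\bar{r}_{i,j}(\alpha) + s_{i,j})\lambda(\bar{r}_{i,j}(\alpha) + s_{i,j})$, whose linearization at $s = 0$ is approximately $a\lambda'(a) < 0$ by Hypothesis~\ref{hyp:LambdaOmega}(1); the coupling terms carry a factor of $\alpha$ and are therefore subordinate for small $\alpha$. Thus the $s$-direction is genuinely fast and contracting at a uniform exponential rate, while the $\psi$-direction evolves slowly because $\dot{\psi} = \alpha G$. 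The geometric picture is that of a normally hyperbolic attracting slow manifold, and I would set up a fixed-point argument in a space of Lipschitz graphs $s = \sigma(\psi,\alpha)$ to produce it.

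First I would linearize $F$ at the origin and split $F(s,\psi,\alpha) = A(\alpha)s + N(s,\psi,\alpha)$, where $A(\alpha)$ is the diagonal linear operator with entries near $\bar{r}_{i,j}\lambda'(\bar{r}_{i,j})$ and $N$ collects the $\alpha$-order coupling together with the higher-order radial terms. The key estimates are: (i) the semigroup $e^{A(\alpha)t}$ contracts on $\ell^1$ like $e^{-\beta t}$ for a uniform $\beta > 0$ (using $\lambda'(a)<0$ and continuity, after shrinking $\alpha_0$), and (ii) the nonlinearity $N$ together with the slow vector field $\alpha G$ has Lipschitz constants that are $\mathcal{O}(\alpha)$ in the relevant semi-norms. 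For (ii) the crucial structural point is that every difference appearing in $F$ and $G$ is of the coupling type $\sum_{i',j'}(\cdot_{i',j'} - \cdot_{i,j})$, so the Lipschitz dependence on $\psi$ is controlled not by $\|\psi\|_p$ but by the semi-norm $Q_p(\psi)$ introduced in Section~\ref{sec:SpatialSettings}; Lemmas~\ref{lem:NormBnds} and \ref{lem:AltGradient} are exactly the tools that convert these nearest-neighbour differences into bounds involving $Q_p$, and this is what yields the sharp Lipschitz estimate $\|\sigma(\psi,\alpha)-\sigma(\tilde\psi,\alpha)\|_p \leq \sqrt{\alpha}\,Q_p(\psi-\tilde\psi)$ with the $\sqrt{\alpha}$ prefactor. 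I would then define the graph transform $\mathcal{G}$ acting on the complete metric space of graphs $\sigma$ satisfying $\sigma(0,\alpha)=0$, $\sup_\psi\|\sigma\|_\infty \leq \sqrt{\alpha}$, and the stated $\sqrt\alpha$-Lipschitz bound, and show $\mathcal{G}$ is a contraction for $\alpha \in [0,\alpha_0]$; its fixed point is the desired $\sigma$, and invariance follows by construction. The bound $\sup_\psi\|\sigma\|_\infty \leq \sqrt\alpha$ comes from balancing the $\mathcal{O}(\alpha)$ forcing against the $\mathcal{O}(1)$ contraction rate.

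For the stability statement I would write a solution as $(s(t),\psi(t))$ with $v(t) := s(t) - \sigma(\psi(t),\alpha)$ measuring the distance to the manifold in the $\ell^1$ norm, and derive a differential inequality for $\|v(t)\|_1$. Differentiating and using invariance (so that $\sigma$ is flow-invariant) gives $\dot{v} = A(\alpha)v + [\text{terms that are Lipschitz in } v \text{ with small constant}]$; the fast contraction from $e^{A(\alpha)t}$ beats the perturbation, yielding $\|v(t)\|_1 \leq \|v(0)\|_1 e^{-\beta t}$ up to the factor of $2$ absorbed from the initial transient and the Lipschitz cross-terms, provided $\delta^* > 0$ is taken small enough to keep the trajectory in the neighbourhood where the estimates hold. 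A standard Gronwall or comparison argument on $\|v(t)\|_1$ closes this.

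The main obstacle I expect is technical rather than conceptual: making all of the perturbation estimates genuinely uniform over the \emph{infinite} lattice and over $\psi \in \ell^1$, while respecting the distinction between the norm $\|\cdot\|_p$ and the semi-norm $Q_p$. Because $Q_p$ annihilates constants and the phase variable $\psi$ enters only through nearest-neighbour differences, naive $\ell^p$-Lipschitz bounds would be both wasteful and, for the sharp $\sqrt\alpha$ constant, insufficient; one must track the coupling structure carefully so that the contributions of the trigonometric coupling terms in $F$ and $G$ are bounded by $Q_p(\psi - \tilde\psi)$ rather than by $\|\psi - \tilde\psi\|_p$. Verifying that the graph transform preserves the precise Lipschitz-in-$Q_p$ estimate (and not merely some Lipschitz estimate) is where the delicate use of Lemmas~\ref{lem:NormBnds} and \ref{lem:AltGradient}, together with the Lipschitz control (\ref{RotWaveLip}) on $\bar{r}_{i,j}(\alpha)$, will have to be done with care.
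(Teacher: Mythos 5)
Your plan is sound and identifies all of the load-bearing estimates: the uniform contraction rate coming from $a\lambda'(a)<0$ in Hypothesis~\ref{hyp:LambdaOmega}, the $\mathcal{O}(\alpha)$ size of the coupling terms, the fixed-point space of graphs with $\sigma(0,\alpha)=0$, $\sup_\psi\|\sigma(\psi)\|_\infty\le\sqrt{\alpha}$ and the $\sqrt{\alpha}$-Lipschitz bound in the $Q_p$ semi-norms, and the role of Lemmas~\ref{lem:NormBnds} and \ref{lem:AltGradient} in converting nearest-neighbour differences into $Q_p$ bounds. However, you construct the manifold by a genuinely different method than the paper. You propose the Hadamard graph transform (push a Lipschitz graph forward under the flow and contract), whereas the paper uses a Lyapunov--Perron integral operator: for each candidate graph $\sigma$ it solves the slow equation $\dot\psi=\alpha G(\sigma(\psi),\psi,\alpha)$, substitutes the resulting flow $\psi^*(u;\psi,\sigma)$ into the variation-of-constants formula for the fast variable, and integrates backward over $(-\infty,0]$, giving the explicit operator $T_1$ in (\ref{T1Mapping}); the manifold is the fixed point of $T_1$ on exactly the space $\tilde{X}(\sqrt{\alpha},\sqrt{\alpha})$ you describe, with the contraction measured only in the weak sup-$\ell^\infty$ norm (\ref{XNorm}) while the $Q_p$-Lipschitz property is merely propagated, not contracted. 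The Lyapunov--Perron route buys two things your route must pay for separately: (i) it never requires inverting the slow time-$T$ map $\psi\mapsto\Phi^\psi_T(\sigma(\psi),\psi)$ on $\ell^1$, which the graph transform needs in order for the image of a graph to again be a graph (doable for small $\alpha$ via a Lipschitz inverse function theorem, but an extra infinite-dimensional step); and (ii) since $T_1$ is explicit, verifying that the sharp $\sqrt{\alpha}$-Lipschitz bound in every $Q_p$, $p\in[1,\infty]$, is preserved reduces to the flow estimate (\ref{PhaseBnd1}) of Lemma~\ref{lem:InvManPhase} together with direct integral bounds, rather than tracking the estimate through a composed flow map. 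For stability, you propose a Gronwall inequality for $v(t)=s(t)-\sigma(\psi(t),\alpha)$, while the paper runs a second contraction argument with the operator $T_2$ in (\ref{T2Mapping}) on the exponentially weighted space $Y(\delta)$; these are of equivalent strength here, and yours is the more elementary of the two.

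One caution on the stability step: you ``differentiate and use invariance'' to obtain $\dot v = A(\alpha)v+\cdots$, but the $\sigma$ produced by either construction is only Lipschitz in $\psi$, and invariance constrains the derivative of $t\mapsto\sigma(\psi(t),\alpha)$ only along the \emph{on-manifold} vector field $\alpha G(\sigma(\psi),\psi,\alpha)$, whereas your $\psi(t)$ follows the perturbed field $\alpha G(\sigma(\psi)+v,\psi,\alpha)$. The paper's derivation of (\ref{rhoEqn}) glosses over the same point, but its subsequent analysis works entirely with the mild (integral) formulation, where this is harmless; if you use Gronwall, apply it to the integral inequality for $\|v(t)\|_1$ rather than to a pointwise differential equation, which sidesteps the issue.
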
 

Theorem~\ref{thm:InvMan} allows one to reduce the dynamics of (\ref{FullODE}) to the invariant manifold to understand the behaviour of the phase components, $\psi$. When put back into the single complex variable $z_{i,j}$, this invariant manifold represents an infinite dimensional invariant torus given by
\[
	\psi \mapsto \{[(\bar{r}_{i,j}(\alpha) + \sigma_{i,j}(\psi,\alpha)]e^{{\rm i}(\bar{\theta}_{i,j}(\alpha) + \psi)}]_{i,j}\}_{(i,j)\in\mathbb{Z}^2},	 
\]
where we write $\sigma(\psi) = \{\sigma_{i,j}(\psi)\}_{(i,j)\in\mathbb{Z}^2}$. Then, to extend Theorem~\ref{thm:InvMan} by examining the stability of solutions on the invariant manifold, we must first define the linear operator $L_\alpha$ acting on the sequences $x = \{x_{i,j}\}_{(i,j)\in\mathbb{Z}^2}$ by 
\begin{equation}\label{LinearPhase}
	[L_\alpha x]_{i,j} = \sum_{i',j'} \cos(\bar{\theta}_{i',j'}(\alpha) - \bar{\theta}_{i,j}(\alpha))(x_{i',j'} - x_{i,j}),	
\end{equation}
for all $(i,j)\in\mathbb{Z}^2$. From the fact that each index $(i,j)\in\mathbb{Z}^2$ has exactly four nearest-neighbours, it is a straightforward exercise to find that $L_\alpha:\ell^p \to \ell^p$ is a bounded linear operator for all $p \in [1,\infty]$ and $\alpha \in [0,\alpha^*]$, with uniformly bounded operator norm. Moreover, one may use the methods of \cite[Proposition~6.2]{Bramburger1} to show that $L_\alpha:\ell^\infty\to \ell^\infty$ is not a Fredholm operator, which implies that forward time exponential dichotomies of the $\ell^\infty$ norm of the solutions to the linear ordinary differential equation
\begin{equation}\label{GraphODE}
	\dot{x} = L_{\alpha}x
\end{equation}
cannot be obtained for arbitrary initial conditions $x^0 \in \ell^\infty$. When every $\cos(\bar{\theta}_{i',j'}(\alpha) - \bar{\theta}_{i,j}(\alpha))$ term is nonnegative the result \cite[Theorem~2.1]{Bauer} then extends this result to give that the spectrum of $L_\alpha:\ell^p \to \ell^p$ for every $p \in [1,\infty]$ must therefore have nontrivial intersection with the imaginary axis of the complex plane, implying that forward time exponential dichotomies of the solutions to (\ref{GraphODE}) cannot be obtained for initial conditions in any of the $\ell^p$ sequence spaces. The following hypothesis gives that although exponential dichotomies cannot be obtained, we still assume that solutions to (\ref{GraphODE}) exhibit an algebraic decay in the $\ell^p$ norms.

\begin{hyp}\label{hyp:LinearPhase} 
	Let $L_\alpha$ act on the sequences indexed by $\mathbb{Z}^2$ as in (\ref{LinearPhase}). There exists constants $C_L,\eta > 0$ such that for all $x^0 \in \ell^1$, $\alpha \in [0,\alpha^*]$ and $t \geq 0$ we have 
	\begin{equation}\label{LAlphaDecay}
		\begin{split}
			&\|e^{L_\alpha t} x^0\|_p \leq C_L(1 + t)^{-1 + \frac{1}{p}}\|x^0\|_1, \\
			&Q_p(e^{L_\alpha t} x^0) \leq C_L(1 + t)^{-2 + \frac{1}{p}}\|x^0\|_1,
		\end{split}
	\end{equation}
	where $e^{L_\alpha t}$ is the semi-group with infinitesimal generator given by $L_\alpha$.
\end{hyp} 

The decay rates stated in Hypothesis~\ref{hyp:LinearPhase} are not arbitrary and in Section~\ref{sec:Applications} we show that they hold for a wide range of linear operators of the form $L_\alpha$. Moreover, it was shown in \cite{Bramburger3} that when 
\[
	\cos(\bar{\theta}_{i',j'}(\alpha) - \bar{\theta}_{i,j}(\alpha)) \geq 0
\] 
for all $(i,j)\in\mathbb{Z}^2$, the linear operator (\ref{LinearPhase}) can be interpreted as a graph Laplacian operator associated to an infinite graph with vertex set lying in one-to-one correspondence with the indices of the lattice. Then, it is shown that the geometry of this underlying graph can be used to obtain similar decay rates to those in (\ref{LAlphaDecay}), but that the $\ell^p$ and $Q_p$ decay rates differ by a fixed constant $\eta > 0$. Determining the value $\eta$ comes from \cite[Theorem~5.4.12]{SaloffCoste}, which is quite technical since the investigation is conducted in complete generality. For now, the reader should note that the $Q_p$ semi-norms generalize the norms of the gradient of a function and hence one would expect that $\eta = 1$ as it is in the continuous spatial setting. We will see that this is exactly the case for a number of applications in Section~\ref{sec:Applications}. 

\begin{hyp}\label{hyp:pStar} 
	There exists a constant $C_\mathrm{sol} > 0$ and $p^* \geq 1$ such that for all $\alpha \in [0,\alpha^*]$ we have 
	\begin{equation}\label{pStar}
		\sum_{(i,j)\in\mathbb{Z}^2}\sum_{i',j'}\bigg|\frac{\bar{r}_{i',j'}(\alpha)}{\bar{r}_{i,j}(\alpha)} - 1\bigg|^{p^*} \leq C_\mathrm{sol}.
	\end{equation}
	We will denote $q^* \geq 1$ to be the H\"older conjugate of $p^*$ in that $\frac{1}{p^*} + \frac{1}{q^*} = 1$. 
\end{hyp} 

Hypothesis~\ref{hyp:pStar} is a localization statement for the radial variables. It states that the nearest-neighbours should asymptotically become constant at a fast enough rate. That rate is measured by the constant $p^* \geq 1$. Again in Section~\ref{sec:Applications} we will see that in many applications this hypothesis can be met easily since many steady-state solutions satisfying Hypothesis~\ref{hyp:PolarSoln} have $\bar{r}_{i,j}(\alpha)$ being independent of $(i,j)$ for each $\alpha$, which allows one to take $p^* = 1$. The following theorem shows exactly how the value of $p^*$ enters our analysis since we have that the nonlinear stability of the phase components does not fully mimic the linear decay rates stated in (\ref{LAlphaDecay}). These hypotheses now lead to the major result of this manuscript which details the nonlinear stability of these periodic solutions of the system (\ref{LambdaOmegaLDS}).

\begin{thm}\label{thm:PhaseDecay} 
	Assume Hypotheses~\ref{hyp:LambdaOmega}-\ref{hyp:pStar} and that $\omega_1(R,\alpha) = 0$ for all $R,\alpha \geq 0$.. Then there exists $\alpha_1 \in (0,\alpha^*]$ and a constant $\varepsilon^* > 0$ such that for all $\varepsilon \in [0,\varepsilon^*]$ and $s^0,\psi^0\in\ell^1$ with the property that
	\begin{equation}
		\|s^0 - \sigma(\psi^0,\alpha)\|_1\leq \varepsilon, \quad \|\psi^0\|_1 \leq \varepsilon,	
	\end{equation}
	there exists a unique solution of (\ref{FullODE}) for all $t\geq 0$ and $\alpha \in [0,\alpha_1]$, denoted $(s(t),\psi(t))$, satisfying the following properties:
	\begin{enumerate}
		\item $s(0) = s^0$ and $\psi(0) = \psi^0$.
		\item $(s(t),\psi(t)) \in \ell^1\times \ell^1$ for all $t \geq 0$.
		\item There exists a $\beta, C_\psi > 0$ such that 
			\[
				\|s(t)-\sigma(\psi(t),\alpha)\|_1 \leq 2\varepsilon e^{-\beta t},
			\]
			and
			\[
				\begin{split}
					\|\psi(t)\|_p &\leq \varepsilon C_\psi(1 + \alpha t)^{-1 + \frac{1}{p}}, \\
					Q_p(\psi(t)) &\leq \varepsilon C_\psi(1 + \alpha t)^{-\min\{2 - \frac{1}{p},2 - \frac{1}{q^*}\}},
				\end{split}
			\]
			for all $t\geq 0$, $\alpha \in [0,\alpha_1]$, and $p \in [1,\infty]$.
	\end{enumerate}	
\end{thm}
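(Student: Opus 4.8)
The plan is to reduce the full system to the invariant slow manifold from Theorem~\ref{thm:InvMan}, then analyze the resulting closed equation for $\psi$ via a variation-of-constants (Duhamel) formula built on the semigroup $e^{L_\alpha t}$, bootstrapping the algebraic decay rates from Hypothesis~\ref{hyp:LinearPhase} through the nonlinearity. Let me think about how the pieces fit together.

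First, I would establish the reduced equation. By Theorem~\ref{thm:InvMan} the set $\{s = \sigma(\psi,\alpha)\}$ is invariant and exponentially attracting in the $\ell^1$ norm: if $\|s^0 - \sigma(\psi^0,\alpha)\|_1 \le \varepsilon$ then $\|s(t) - \sigma(\psi(t),\alpha)\|_1 \le 2\varepsilon e^{-\beta t}$, which immediately gives property (3)'s first estimate. So the substantive content is the decay of $\psi(t)$ itself. On the manifold, substituting $s = \sigma(\psi,\alpha)$ into the $\dot\psi = \alpha G(s,\psi,\alpha)$ equation yields a closed slow equation $\dot\psi = \alpha G(\sigma(\psi,\alpha),\psi,\alpha)$. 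The key structural step is to split $G$ into its linear principal part and a genuinely higher-order remainder. Using $\omega_1 \equiv 0$ (the hypothesis forcing constant $\omega$) and the product-to-difference structure of the $\sin$ terms, I expect $\alpha G$ to decompose as $\alpha G(\sigma(\psi,\alpha),\psi,\alpha) = L_\alpha \psi + \alpha N(\psi,\alpha)$ after rescaling, where $L_\alpha$ is exactly the operator in \eqref{LinearPhase} (the linearization of the $\sin$ coupling about $\bar\theta$, weighted by $\cos(\bar\theta_{i',j'} - \bar\theta_{i,j})$ and the ratios $\bar r_{i',j'}/\bar r_{i,j}$, which by Hypothesis~\ref{hyp:pStar} are close to those in $L_\alpha$), and $N$ collects all the quadratic-and-higher terms in $\psi$ together with the coupling through $\sigma$.

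Next, I would set up the slow time $\tau = \alpha t$ so that the linear part becomes $\frac{d\psi}{d\tau} = L_\alpha \psi + N(\psi,\alpha)$ with the decay of $e^{L_\alpha \tau}$ given directly by \eqref{LAlphaDecay}, and write the Duhamel formula
\[
\psi(\tau) = e^{L_\alpha \tau}\psi^0 + \int_0^\tau e^{L_\alpha(\tau - \tau')} N(\psi(\tau'),\alpha)\, d\tau'.
\]
The strategy is a fixed-point/continuation argument in a weighted space: define the norm
\[
\|\psi\|_* = \sup_{\tau \ge 0}\Big[(1+\tau)^{1-\frac1p}\|\psi(\tau)\|_p + (1+\tau)^{\min\{2-\frac1p,\,2-\frac1{q^*}\}} Q_p(\psi(\tau))\Big]
\]
(taking $p=1$, or sups over $p$, as the controlling norm), and show the Duhamel map is a contraction on a small ball. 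The homogeneous term $e^{L_\alpha\tau}\psi^0$ has exactly the right decay by Hypothesis~\ref{hyp:LinearPhase} since $\|\psi^0\|_1 \le \varepsilon$. The work is in the convolution integral: I must estimate $\|N(\psi,\alpha)\|_1$ and $Q_1(N(\psi,\alpha))$ in terms of $\|\psi\|_p$ and $Q_p(\psi)$. Here the nonlinearity is at least quadratic in the phase differences $\psi_{i',j'} - \psi_{i,j}$, so its size is controlled by products like $Q_{p_1}(\psi)\,Q_{p_2}(\psi)$; using Lemma~\ref{lem:AltGradient} and the semi-norm comparison $Q_{p'} \le Q_p \le 8\|\cdot\|_p$ of Lemma~\ref{lem:NormBnds} plus Hölder splitting with exponents governed by $p^*,q^*$, the gradient-squared structure should yield faster-than-$\|\psi\|$ decay. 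This is precisely where $p^*$ and $q^*$ enter and where the $Q_p$ exponent gets capped at $\min\{2-\frac1p, 2-\frac1{q^*}\}$ rather than the pure linear rate $2-\frac1p$: the radial coupling ratios in Hypothesis~\ref{hyp:pStar} can only be absorbed at the cost of a $q^*$ Hölder factor.

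The main obstacle I anticipate is the standard quadratic-decay clash in the convolution: for the integral $\int_0^\tau (1+\tau-\tau')^{-\kappa}(1+\tau')^{-2\mu}\,d\tau'$ to reproduce the decay $(1+\tau)^{-\mu}$ of the linear term, one needs the product of decay rates from the semigroup kernel and the quadratic nonlinearity to be integrable and to not lose the borderline rate; in two spatial dimensions the linear rate $1-\frac1p$ is exactly the critical Gagliardo–Nirenberg-type exponent, so the nonlinear self-interaction is marginally integrable and must be handled with care to avoid a logarithmic loss. The way through is that the nonlinearity involves \emph{gradient} (i.e. $Q_p$) factors, which decay one power faster than the $\ell^p$ norm, giving the needed extra integrability; closing the $Q_p$ estimate (whose own decay exponent is the capped one) consistently with the $\ell^p$ estimate inside a single $\|\cdot\|_*$ contraction is the delicate bookkeeping. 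I would handle this by first proving a local-in-time existence and $\ell^1$-boundedness statement (so that $\psi$ stays in the domain where $\sigma$, hence $N$, is defined and smooth), then running the weighted contraction to get global existence and the stated decay simultaneously, shrinking $\varepsilon^*$ and $\alpha_1$ as needed so that the Lipschitz constant of the Duhamel map is below $\tfrac12$.
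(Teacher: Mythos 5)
Your proposal reproduces most of the skeleton of the paper's argument: a Duhamel formula driven by an algebraically decaying semigroup, the observation that the radial ratios $\bar r_{i',j'}(\alpha)/\bar r_{i,j}(\alpha)$ can only be absorbed through a H\"older factor involving $Q_{q^*}$ --- which is precisely where the capped exponent $\min\{2-\frac1p,2-\frac1{q^*}\}$ originates (compare Lemma~\ref{lem:AsymBnd} and Proposition~\ref{prop:LinearPhaseDecay}) --- and a weighted-norm bootstrap closed on gradient semi-norms, whose quadratic structure supplies the extra integrability needed at the critical rate (compare Lemmas~\ref{lem:G1}, \ref{lem:G2}, \ref{lem:QPhaseBnd} and \ref{lem:UFixedPt}). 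The organizational difference that you keep $e^{L_\alpha t}$ and push $(\tilde L_\alpha-L_\alpha)\psi$ into the nonlinearity, whereas the paper first proves decay for the radially weighted semigroup $e^{\tilde L_\alpha t}$ and then runs Duhamel with it, is immaterial.

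However, there is a genuine gap at your very first step: you substitute $s=\sigma(\psi,\alpha)$ and analyze the closed on-manifold equation $\dot\psi=\alpha G(\sigma(\psi,\alpha),\psi,\alpha)$. The theorem concerns initial data with $\|s^0-\sigma(\psi^0,\alpha)\|_1\le\varepsilon$, i.e.\ data that are near but generally \emph{not on} the manifold, and the phase of the true solution obeys the non-autonomous equation $\dot\psi=\alpha G(\sigma(\psi,\alpha)+\rho(t),\psi,\alpha)$, where $\rho=s-\sigma(\psi,\alpha)$ solves (\ref{rhoEqn}). Theorem~\ref{thm:InvMan} gives attraction to the manifold, not an asymptotic-phase or invariant-foliation statement, so you cannot identify the off-manifold phase dynamics with the on-manifold ones: the difference $G(\sigma+\rho,\psi,\alpha)-G(\sigma,\psi,\alpha)$ is a genuine forcing of size $\mathcal{O}(\|\rho\|_1)$ that must be carried through the entire fixed-point argument. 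The paper does exactly this: the slow equation (\ref{GSlow}) retains $\rho(\alpha^{-1}\tau)$, the nonlinear estimates are stated for $s=\sigma(\psi,\alpha)+\rho$ (Lemma~\ref{lem:G1}, Corollary~\ref{cor:GBnd}), and a dedicated two-time-scale convolution estimate, Lemma~\ref{lem:IntegralLemmaExp}, is needed to integrate the mixed kernel $(1+\tau-u)^{-\gamma}e^{-\beta u/\alpha}$, producing an $\mathcal{O}(\alpha\varepsilon)$ contribution that preserves the algebraic rates. As written, your argument establishes the stated decay only for solutions starting exactly on the invariant manifold; to prove the theorem you must add the $\rho$-forcing to your Duhamel map and supply an analogue of Lemma~\ref{lem:IntegralLemmaExp}, neither of which appears in your outline.
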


\begin{rmk}
	Theorem~\ref{thm:PhaseDecay} extends Theorem~\ref{thm:InvMan} by saying that if we choose an initial condition $(s^0,\psi^0) \in \ell^1\times\ell^1$ sufficiently close to the steady-state $(s,\psi) = (0,0)$ in the system (\ref{FullODE}), we obtain exponential decay onto the invariant manifold, along with algebraic decay of the phase component $\psi(t)$ with respect to the $\ell^p$ norms. Moreover, if we write
	\[
		\rho(t) := s(t) - \sigma(\psi(t),\alpha), 
	\]
	then $\rho(t)$ describes the deviation from the invariant manifold which is decaying exponentially. Theorem~\ref{thm:InvMan} gives that 
	\[
		\|\sigma(\psi(t),\alpha)\|_p \leq \sqrt{\alpha} Q_p(\psi(t)) \leq \sqrt{\alpha}\varepsilon C_\psi (1 + \alpha t)^{-\min\{2 - \frac{1}{p},2 - \frac{1}{q^*}\}},	
	\]
	which shows that the component of $s(t)$ that belongs to the invariant manifold decays at a significantly faster rate than $\psi(t)$ in each $\ell^p$ norm since we necessarily have $q^* > 1$. The reader should notice that the decay rates of $Q_p(\psi(t))$ in Theorem~\ref{thm:PhaseDecay} are all faster than their corresponding $\ell^p$ decay rate and are determined by the value $p^*$ coming from Hypothesis~\ref{hyp:pStar}. This is a consequence of our analysis, which becomes apparent in the proof of Lemma~\ref{lem:QPhaseBnd}. Finally, the reader should note the different timescales in which the decay is taking place, where all algebraic decay is with respect to the slow-time variable $\alpha t$. 
\end{rmk}

The proof of Theorem~\ref{thm:InvMan} is left to Section~\ref{sec:InvMan}, which is broken into two subsections which deal with existence and stability of the invariant manifold respectively. Then, the proof of Theorem~\ref{thm:PhaseDecay} that remains after proving Theorem~\ref{thm:InvMan} is left to Section~\ref{sec:Thm2Proof}. We first provide a pair of useful integral inequalities in Subsection~\ref{subsec:IntegralBounds} which are used throughout Section~\ref{sec:Thm2Proof}. Then, in Subsection~\ref{subsec:Linearization} we study the decay properties of a semigroup closely related to the semi-group $e^{L_\alpha t}$ given in Hypothesis~\ref{hyp:LinearPhase}. These linear estimates give way to the estimates on the nonlinear terms of the function $G$ in Subsection~\ref{subsec:Estimates}. Having then obtained estimates for both the linear and nonlinear terms associated to the function $G$, we then provide the proof of Theorem~\ref{thm:PhaseDecay} in Subsection~\ref{subsec:PhaseStability}. Prior to proving Theorems~\ref{thm:InvMan} and \ref{thm:PhaseDecay} though, we dedicate the following section to exploring some applications of these results.

\section{Applications of the Main Results}\label{sec:Applications} 

In this section we provide a number of important applications of our main results, as well as discuss the possibility of it applying to demonstrate nonlinear stability of a rotating wave. Throughout this section we will assume that $\omega$ is a function of $\alpha$ only so that we may apply both Theorem~\ref{thm:InvMan} and Theorem~\ref{thm:PhaseDecay}. Prior to providing these applications, we present the following result whose proof is presented in Subsection~\ref{subsec:Proof}.

\begin{lem}\label{lem:Hyp3} 
	Let $d_1,d_2 > 0$ be arbitrary and consider the linear operator $L$ acting on the sequences $x = \{x_{i,j}\}_{(i,j)\in\mathbb{Z}^2}$ by
	\[
		[Lx]_{i,j} = d_1(x_{i+1,j} + x_{i-1,j} - 2x_{i,j}) + d_2(x_{i,j+1} + x_{i,j-1} - 2x_{i,j})
	\]
	for all $(i,j)\in\mathbb{Z}^2$. Then $L$ satisfies Hypothesis~\ref{hyp:LinearPhase}.
\end{lem}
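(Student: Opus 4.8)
The plan is to exploit the fact that $L$ is a translation-invariant (convolution) operator on $\mathbb{Z}^2$ that splits as a sum of two commuting one-dimensional operators. Writing $\Delta_1 x_{i,j} = x_{i+1,j}+x_{i-1,j}-2x_{i,j}$ and $\Delta_2 x_{i,j} = x_{i,j+1}+x_{i,j-1}-2x_{i,j}$, we have $L = d_1\Delta_1 + d_2\Delta_2$ with $\Delta_1$ and $\Delta_2$ commuting, so the norm-convergent semigroup (recall $L$ is bounded on every $\ell^p$) factorizes as
\[
	e^{Lt} = e^{d_1 t\Delta_1}\,e^{d_2 t\Delta_2}.
\]
Each one-dimensional factor is convolution with the classical discrete heat kernel $k^{(m)}_n(t) = e^{-2d_m t}I_n(2d_m t)$, where $I_n$ is the modified Bessel function of order $n$; this follows from the Fourier symbol $2d_m(\cos\xi - 1)$ together with the integral representation of $I_n$. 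Hence $e^{Lt}$ is convolution with the product kernel $K_{i,j}(t) = k^{(1)}_i(t)\,k^{(2)}_j(t)$, and Young's inequality $\|K(t)\ast x^0\|_p \le \|K(t)\|_p\|x^0\|_1$ reduces both claimed estimates to bounds on the $\ell^p$ norms of $K(t)$ and of its discrete differences.

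For the first bound I would record two endpoint estimates for each one-dimensional kernel. Mass conservation gives $\|k^{(m)}(t)\|_1 = e^{-2d_m t}\sum_n I_n(2d_m t) = 1$ via the generating function $\sum_n I_n(z) = e^z$, while the asymptotic $I_0(z)\sim e^z/\sqrt{2\pi z}$ together with small-time regularity yields $\|k^{(m)}(t)\|_\infty = k^{(m)}_0(t) \le C(1+t)^{-1/2}$. Interpolating between $\ell^1$ and $\ell^\infty$ gives $\|k^{(m)}(t)\|_p \le C(1+t)^{-\frac{1}{2}(1-\frac1p)}$. Since the product structure gives $\|K(t)\|_p = \|k^{(1)}(t)\|_p\,\|k^{(2)}(t)\|_p$, Young's inequality produces the first estimate $\|e^{Lt}x^0\|_p \le C(1+t)^{-1+\frac1p}\|x^0\|_1$, with $C$ independent of $\alpha$ (the operator here does not depend on $\alpha$ at all).

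For the $Q_p$ estimate I would first note, after reindexing the four nearest-neighbour terms, that $Q_p(y)^p = 2\|\Delta_h y\|_p^p + 2\|\Delta_v y\|_p^p$, where $(\Delta_h y)_{i,j} = y_{i+1,j}-y_{i,j}$ and $(\Delta_v y)_{i,j} = y_{i,j+1}-y_{i,j}$, so that $Q_p(y) \le 2^{1/p}\big(\|\Delta_h y\|_p + \|\Delta_v y\|_p\big)$. Because differencing commutes with convolution, $\Delta_h(e^{Lt}x^0)$ is the convolution of $x^0$ with the differenced kernel whose entries are $(\delta k^{(1)})_i(t)\,k^{(2)}_j(t)$, where $(\delta k^{(1)})_i = k^{(1)}_{i+1}-k^{(1)}_i$, and symmetrically for $\Delta_v$. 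Young's inequality then gives $\|\Delta_h(e^{Lt}x^0)\|_p \le \|\delta k^{(1)}(t)\|_p\,\|k^{(2)}(t)\|_p\,\|x^0\|_1$, so the entire $Q_p$ bound is reduced to the decay rate of the differenced one-dimensional kernel $\|\delta k(t)\|_p$.

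The main obstacle — and the place where the extra algebraic decay of $Q_p$ is generated — is this sharp estimate on $\|\delta k(t)\|_p$. I would again argue by interpolation: unimodality of $k(t)$ (that is, $k_0 > k_1 > \cdots > 0$, a standard monotonicity of $I_n$ in its order) telescopes the total variation to $\|\delta k(t)\|_1 = 2k_0(t) \le C(1+t)^{-1/2}$, while the Bessel recurrence $I_{n-1}(z)-I_{n+1}(z) = \tfrac{2n}{z}I_n(z)$ together with the Gaussian/Bessel asymptotics in the transition region $|n|\sim\sqrt{t}$ controls $\|\delta k(t)\|_\infty$, each discrete difference contributing a further factor of order $(1+t)^{-1/2}$ relative to $\|k(t)\|_\infty$. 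Interpolating these endpoint bounds and recombining through the product structure yields the algebraic $Q_p$ decay asserted in Hypothesis~\ref{hyp:LinearPhase}. I expect the genuinely delicate point to be making the modified-Bessel estimates and their first differences uniform for all $t\ge 0$ rather than merely as $t\to\infty$, so that the small-time regime and the diffusive region $|n|\sim\sqrt t$ are treated on the same footing; pinning down the precise exponent of the claimed $Q_p$ decay rests entirely on this uniform control of $I_n$ and its differences, and is the crux of the argument.
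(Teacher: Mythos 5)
Your strategy---factor $e^{Lt}$ into two commuting one-dimensional Bessel semigroups, reduce everything to kernel bounds via Young's inequality, and interpolate between $\ell^1$ and $\ell^\infty$ endpoints---is sound, and your endpoint estimates for the differenced kernel are not only correct but sharp. That sharpness is exactly the problem: the final sentence of your argument, where you assert that recombining the bounds ``yields the algebraic $Q_p$ decay asserted in Hypothesis~\ref{hyp:LinearPhase},'' fails on arithmetic. Interpolating $\|\delta k(t)\|_1 \le C(1+t)^{-1/2}$ with $\|\delta k(t)\|_\infty \le C(1+t)^{-1}$ gives $\|\delta k(t)\|_p \le C(1+t)^{-1+\frac{1}{2p}}$, and multiplying by $\|k(t)\|_p \le C(1+t)^{-\frac{1}{2}+\frac{1}{2p}}$ gives
\[
	Q_p(e^{Lt}x^0) \le C(1+t)^{-\frac{3}{2}+\frac{1}{p}}\|x^0\|_1,
\]
which falls short of the exponent $-2+\frac{1}{p}$ demanded in (\ref{LAlphaDecay}) by half a power of $t$. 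Moreover, the shortfall cannot be repaired, because your $\ell^1$ endpoint is an exact identity rather than an upper bound: unimodality gives $\sum_n |k_{n+1}(t)-k_n(t)| = 2k_0(t) \asymp (1+t)^{-1/2}$. Taking $x^0$ to be the sequence equal to $1$ at the origin and $0$ elsewhere, so that $e^{Lt}x^0 = K(t)$, one gets
\[
	Q_1(e^{Lt}x^0) \ge \sum_{(i,j)\in\mathbb{Z}^2}|K_{i+1,j}(t)-K_{i,j}(t)| = \Big(\sum_i|k^{(1)}_{i+1}(t)-k^{(1)}_i(t)|\Big)\Big(\sum_j k^{(2)}_j(t)\Big) = 2k^{(1)}_0(t) \asymp (1+t)^{-1/2},
\]
whereas the $p=1$ case of (\ref{LAlphaDecay}) demands $Q_1(e^{Lt}x^0) \le C_L(1+t)^{-1}\|x^0\|_1$. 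So your method, carried out honestly, proves the sharp rate $-\frac{3}{2}+\frac{1}{p}$ and simultaneously shows that the rate claimed in Lemma~\ref{lem:Hyp3} is unattainable.

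This puts your computation in direct conflict with the paper's proof, which takes a genuinely different route: it represents $e^{Lt}$ by a kernel $g_{i,j,n,m}(t)$, quotes a Gaussian upper bound from \cite{Delmotte} together with the nearest-neighbour difference estimate $|g_{i',j',n,m}(t)-g_{i,j,n,m}(t)| \le C_g(1+t)^{-1}g_{i,j,n,m}(2t)$ attributed to \cite[Theorem~5.4.12]{SaloffCoste} (the two lines of (\ref{RandomWalk})), then sums to get $Q_1 \le C(1+t)^{-1}\|x^0\|_1$ and $Q_\infty \le C(1+t)^{-2}\|x^0\|_1$ and interpolates. The conflict is resolved against the paper: in the bulk $|n|\sim\sqrt{t}$ the local CLT gives $|k_{n+1}(t)-k_n(t)| \asymp t^{-1} \asymp t^{-1/2}\,k_n(2t)$, so a \emph{first} spatial difference of the discrete heat kernel gains only $(1+t)^{-1/2}$ relative to the kernel itself; the factor $(1+t)^{-1}$ quoted in (\ref{RandomWalk}) is the correct scaling for a \emph{second} difference (equivalently for $\partial_t g$), and the H\"older-regularity consequences of the parabolic Harnack inequality give at best the Lipschitz gain $t^{-1/2}$. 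In short, your proposal does not prove the lemma, but the obstruction you would hit is real: your explicit kernel calculation is a counterexample to the stated $Q_p$ rates for this $L$, and it localizes the error in the paper's own proof to the second inequality of (\ref{RandomWalk}).
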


Lemma~\ref{lem:Hyp3} will be crucial to our applications in the following subsections, which are presented over the following subsections. We begin with the completely synchronous solution, then move to a doubly spatially periodic pattern, and then to periodic traveling waves. Upon providing these three applications of our results, we then discuss how they should be applicable to rotating waves solutions of (\ref{LambdaOmegaLDS}). The only hindrance to applying these results to rotating waves is confirming Hypotheses~\ref{hyp:pStar}. Despite this, we will use numerical simulations to demonstrate that there should be a $p^*$ for which Hypothesis~\ref{hyp:pStar} holds.

\subsection{The Trivial Solution}

As discussed in Section~\ref{sec:Main}, there exists a trivial solution to (\ref{FullPolarLattice}) given by $\bar{r}_{i,j}(\alpha) = a$ and $\bar{\theta}_{i,j}(\alpha) = 0$ for all $(i,j) \in \mathbb{Z}^2$ and $\alpha \geq 0$. This solution leads to a synchronous periodic solution to (\ref{LambdaOmegaLDS}) of the form
\[
	z_{i,j}(t) = a\mathrm{e}^{{\rm i}\omega_0(\alpha)t}
\] 
for all $(i,j)\in \mathbb{Z}^2$. It was already noted that this trivial solution satisfies Hypothesis~\ref{hyp:PolarSoln}, and the associated operator $L_\alpha$ is of the type described in Lemma~\ref{lem:Hyp3} with $d_1 = d_2 = 1$. Finally, since all $\bar{r}_{i,j}(\alpha)$ are identical, it follows that Hypothesis~\ref{hyp:pStar} holds for every $p^* \geq 1$. Hence, Hypotheses~\ref{hyp:LambdaOmega}-\ref{hyp:pStar} hold, and therefore Theorem~\ref{thm:PhaseDecay} can be applied to this trivial solution to demonstrate the stability of such a synchronous periodic solution to (\ref{LambdaOmegaLDS}). Moreover, the fact that we may take $p^* = 1$ implies that the linear decay rates of Hypothesis~\ref{hyp:LinearPhase} are inherited by the fully nonlinear system.

\subsection{Doubly Spatially Periodic Solutions}

Let us begin by fixing two integers $N,M \geq 5$ and consider the phases given by
\[
	\bar{\theta}_{i,j}(\alpha) = \frac{2\pi [i]_{N}}{N} +  \frac{2\pi [j]_{M}}{M},
\]
for all $\alpha \geq 0$, where we have used the notation $[n]_N = n \pmod{N}$. These choices of $\bar{\theta}_{i,j}$ give a sequence indexed by the lattice which is periodic in both the horizontal and the vertical directions. Taking $r_{i,j}(\alpha) = r(\alpha)$ for all $(i,j)\in\mathbb{Z}^2$ with $\bar{\theta}$ as prescribed above implies that $r(\alpha)$ must satisfy
\[
	0 = \alpha\bigg[2\cos\bigg(\frac{2\pi}{N}\bigg) + 2\cos\bigg(\frac{2\pi}{M}\bigg) - 4\bigg]r(\alpha) + r(\alpha)\lambda(r(\alpha)). 
\]
Hypothesis~\ref{hyp:LambdaOmega} implies that $\lambda'(a) < 0$ and hence we may apply the implicit function theorem to obtain such an $r(\alpha)$ which is smooth in $\alpha \geq 0$ (at least for $\alpha$ small), satisfies $r(0) = a$, and the assumptions of Hypothesis~\ref{hyp:PolarSoln}. 

Now, these choices of $(\bar{r}_{i,j}(\alpha),\bar{\theta}_{i,j}(\alpha))$ give
\[
	\sum_{i',j'} \frac{r_{i',j'}}{r_{i,j}}\sin(\theta_{i',j'} - \theta_{i,j}) = \sin\bigg(\frac{2\pi}{N}\bigg) - \sin\bigg(\frac{2\pi}{N}\bigg) + \sin\bigg(\frac{2\pi}{M}\bigg) - \sin\bigg(\frac{2\pi}{M}\bigg) = 0,	
\]
since we have assumed $\omega_1(R,\alpha) = 0$ identically. Hence, our choice of $\bar{\theta}$ and $\bar{r}(\alpha)$ give a periodic solution of (\ref{LambdaOmegaLDS}) which satisfies Hypothesis~\ref{hyp:PolarSoln}. Furthermore, the associated operator $L_\alpha$ is of the type described in Lemma~\ref{lem:Hyp3} with 
\[	
	d_1 = \cos\bigg(\frac{2\pi}{N}\bigg), \quad d_2 = \cos\bigg(\frac{2\pi}{M}\bigg)
\] 
which are both positive since $N,M \geq 5$. Again we have that Hypothesis~\ref{hyp:pStar} is trivially satisfied for all $p^* \geq 1$, and hence, we may apply the results of Theorem~\ref{thm:PhaseDecay} to find that these doubly spatially periodic solutions are nonlinearly asymptotically stable for every $N,M \geq 5$.

\subsection{Periodic Traveling Waves}

This example is very similar to the previous example. Take $N \geq 5$ and consider 
\[
	\bar{\theta}_{i,j}(\alpha) = \frac{2\pi [i]_{N}}{N},
\]
for all $\alpha \geq 0$, which comes as the formal limit of the doubly spatially periodic solutions with $M \to \infty$. It is easy to see that these phase values give an element indexed by the two-dimensional lattice which is periodic in the horizontal direction and identical in the vertical direction, representing a periodic traveling wave moving in the horizontal direction. We again can find the $\bar{r}_{i,j}(\alpha)$ to be identical, thus giving periodic solutions of (\ref{LambdaOmegaLDS}) which satisfy Hypothesis~\ref{hyp:PolarSoln}. Moreover, the results of Lemma~\ref{lem:Hyp3} apply with 
\[
	d_1 = \cos\bigg(\frac{2\pi}{N}\bigg), \quad d_2 = 1
\] 
which are both positive since $N \geq 5$. Hence, we conclude that such periodic traveling wave patterns are locally asymptotically stable for each $N \geq 5$.

\subsection{Rotating Waves}

As previously remarked, it was shown in \cite{Bramburger2} that system (\ref{LambdaOmegaLDS}) possesses a particular nontrivial solution which resembles a rotating wave from the continuous spatial context. This solution was shown to satisfy Hypothesis~\ref{hyp:PolarSoln} for a sufficiently small $\alpha^* > 0$. Throughout this section we simply denote this rotating wave solution by $\{(\bar{r}_{i,j}(\alpha),\bar{\theta}_{i,j}(\alpha))\}_{(i,j)\in\mathbb{Z}^2}$ for convenience. In the present discrete spatial context a rotating wave solution is identified by the discrete rotational identity
\begin{equation}\label{RotIdentity}
	z_{j,1-i}(t;\alpha) = \mathrm{e}^{{\rm i}\frac{\pi}{2}}\cdot z_{i,j}(t;\alpha),
\end{equation}
for every $(i,j)\in\mathbb{Z}^2$ and $\alpha \in [0,\alpha^*]$, upon returning back to the single complex variable $z_{i,j}$ via the ansatz (\ref{PolarAnsatz}). The meaning of the identity (\ref{RotIdentity}) is that a rotation of the entire lattice clockwise through an angle of $\pi/2$ about a theoretical centre cell at $i = j =\frac{1}{2}$ simply leads to a phase advance of a quarter period. Indeed, the ansatz (\ref{PolarAnsatz}) gives
\[
	\begin{split}
	\mathrm{e}^{{\rm i}\frac{\pi}{2}}\cdot z_{i,j}(t;\alpha) &= \bar{r}_{i,j}(\alpha)\mathrm{e}^{{\rm i}(\omega_0(\alpha) t + \bar{\theta}_{i,j}(\alpha) + \frac{\pi}{2})} \\
	&= \bar{r}_{i,j}(\alpha)\mathrm{e}^{{\rm i}(\omega_0(\alpha) (t + \frac{\pi}{2\omega_0(\alpha)}) + \bar{\theta}_{i,j}(\alpha))} \\
	&= z_{i,j}(t + T(\alpha)/4;\alpha),		
	\end{split}
\]
where $T(\alpha):= 2\pi/\omega_0(\alpha)$ is the period of the periodic solution. Here the theoretical centre cell at $i = j =\frac{1}{2}$ acts as the centre of rotating for the rotating wave. 

For the purpose of visualization, we provide an example of a rotating wave solution to the system
\begin{equation} \label{LambdaOmegaLDS2} 
	\dot{z}_{i,j} = \alpha\sum_{i',j'} (z_{i'j'} -z_{i,j}) + z_{i,j}(1 + {\rm i}\omega_0(\alpha) - |z_{i,j}|^2), \quad (i,j) \in \mathbb{Z}^2,	
\end{equation} 
obtained on a $100\times 100$ lattice with Neumann boundary conditions. Here we can see that we have $\lambda(R) = 1 - R^2$, and the rotating wave solution is presented in Figure~\ref{fig:RotWave}. One finds that the values of the phases, $\{\bar{\theta}_{i,j}(\alpha)\}_{(i,j)\in\mathbb{Z}^2}$, around each concentric ring about the centre four cell ring at the indices $(i,j) = (0,0),(0,1),(1,0),(1,1)$ increase from $-\pi$ up to $\pi$ monotonically. This was proven for $\alpha = 0$ in \cite{Bramburger1}, and numerical investigations on the finite lattice lead one to conjecture that this holds for all $\alpha > 0$ for which the solution exists. Most importantly, we have confirmed that Hypotheses~\ref{hyp:LambdaOmega} and \ref{hyp:PolarSoln} do indeed hold for system (\ref{LambdaOmegaLDS2}) and this rotating wave solution, giving that the results of Theorem~\ref{thm:InvMan} hold for such a rotating wave solution.

\begin{figure} 
	\centering
		\includegraphics[width = 0.45\textwidth]{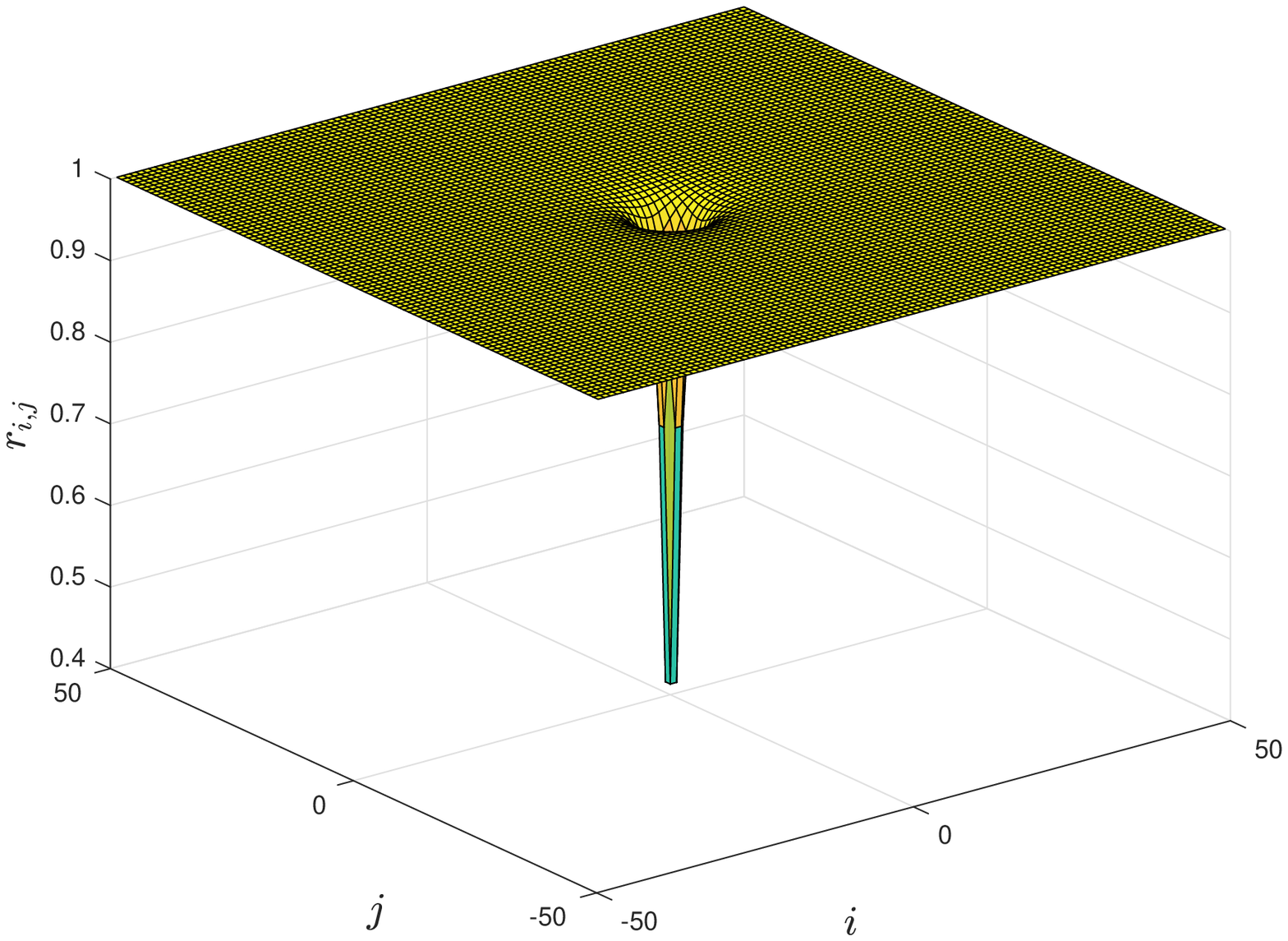}
		\includegraphics[width = 0.45\textwidth]{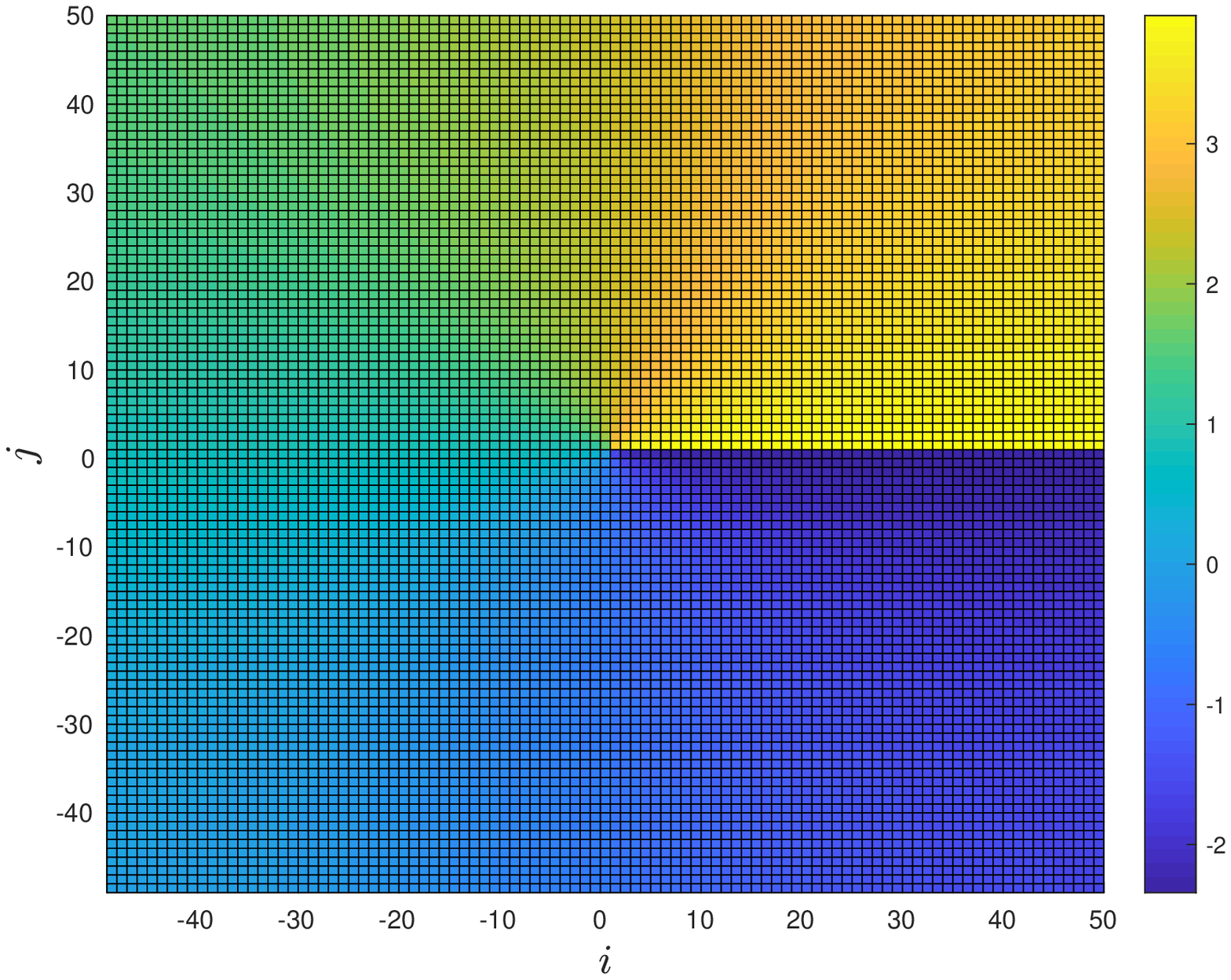}
		\caption{The radial and phase components of a rotating wave solution to (\ref{LambdaOmegaLDS2}) with $\lambda(R) = 1 - R^2$ and $\alpha = 1$ on a finite $100\times 100$ lattice with Neumann boundary conditions. The four cell ring given by the indices $(i,j) = (0,0),(0,1),(1,0),(1,1)$ acts as the centre of the rotating wave, and on the left one sees that the radial components show very little deviation from the unique positive root at $R = 1$ of $\lambda(R) = (1 - R^2)$ as one moves out from this centre of the rotating wave. Around each concentric ring about the centre four cell ring, the phase components increase from $-\pi$ up to $\pi$ monotonically, which can be observed in the contour plot of the phase components on the right.}
		\label{fig:RotWave}	
\end{figure} 

We now turn to the problem of verifying Hypothesis~\ref{hyp:LinearPhase}. The decay rates on the $\ell^p$ norms required by (\ref{LAlphaDecay}) were shown to be true for $\alpha = 0$ in \cite[Section~6.2]{Bramburger3}, and these arguments can be replicated to show that the decay rates (\ref{LAlphaDecay}) hold for sufficiently small $\alpha \geq 0$. To verify this claim, we begin by noting that the work in \cite{Bramburger2} gives that the coupling between any two of the four centre cells is exactly $\pi/2$ since we have 
\[
	\begin{split}
		\bar{\theta}_{1,1}(\alpha) &= 0, \\
		\bar{\theta}_{0,1}(\alpha) &= \frac{\pi}{2}, \\
		\bar{\theta}_{0,0}(\alpha) &= \pi, \\
		\bar{\theta}_{1,0}(\alpha) &= \frac{3\pi}{2},
	\end{split}
\] 
for all $\alpha > 0$ for which the solution exists. At $\alpha = 0$ we have that all other nearest-neighbour interactions are such that $|\bar{\theta}_{i',j'}(0) - \bar{\theta}_{i,j}(0)| < \frac{\pi}{2}$, thus giving that continuity with respect to $\alpha$ will ensure that for sufficiently small $\alpha > 0$ we have 
\[
	\cos(\bar{\theta}_{i',j'}(\alpha) - \bar{\theta}_{i,j}(\alpha)) > 0,
\]  
for all $(i,j),(i',j') \notin \{(0,0), (0,1), (1,0), (1,1)\}$. We then consider a graph with vertices given by the indices of the lattice $\mathbb{Z}^2$ together with an edge set which connects nearest-neighbours if, and only if, the quantity $\cos(\bar{\theta}_{i',j'}(\alpha) - \bar{\theta}_{i,j}(\alpha))$ is positive, and no other edges present in the graph. Visually, this graph is simply the standard integer lattice with edges connecting all nearest-neighbours less those between the four centre cells, as is shown in Figure~\ref{fig:Graph}. Our arguments above imply that for sufficiently small $\alpha \geq 0$ the geometry of this graph remains unchanged in that no new edges are created or destroyed. This understanding of the graph geometry for sufficiently small $\alpha \geq 0$ then allows to provide the following lemma which confirms Hypothesis~\ref{hyp:LinearPhase} for this application. 

\begin{figure} 
	\centering
	\includegraphics[height=0.3\textwidth]{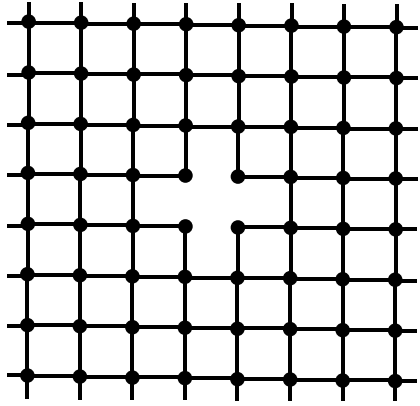}
	\caption{The graph associated to the linear operator $L_\alpha$, coming from the phase components $\{\bar{\theta}_{i,j}(\alpha)\}_{(i,j)\in\mathbb{Z}^2}$. The vertex set lies in one-to-one correspondence with the index set $\mathbb{Z}^2$ and an edge connects two vertices if, and only if, the vertices are nearest-neighbours and $\cos(\bar{\theta}_{i',j'}(\alpha) - \bar{\theta}_{i,j}(\alpha)) > 0$.}
	\label{fig:Graph}
\end{figure} 

\begin{lem}\label{lem:RotHyp} 
	The linear operator $L_\alpha$ associated to the rotating wave satisfies Hypothesis~\ref{hyp:LinearPhase}.
\end{lem}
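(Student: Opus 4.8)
The plan is to reduce Lemma~\ref{lem:RotHyp} to Lemma~\ref{lem:Hyp3} via the graph-Laplacian interpretation already set up in the surrounding text. The key observation is that for sufficiently small $\alpha \geq 0$ the operator $L_\alpha$ defined in (\ref{LinearPhase}) is, up to positive and uniformly bounded weights on the edges, the graph Laplacian of the fixed graph depicted in Figure~\ref{fig:Graph}: the standard integer lattice $\mathbb{Z}^2$ with the four edges interior to the centre four-cell ring removed. Since the preceding discussion establishes that $\cos(\bar{\theta}_{i',j'}(\alpha) - \bar{\theta}_{i,j}(\alpha)) > 0$ for every nearest-neighbour pair except those among the centre cells, and that this sign pattern (hence the graph geometry) is independent of $\alpha$ on a small interval $[0,\alpha^{**}]$, the operator $L_\alpha$ has the form of a weighted graph Laplacian with strictly positive, uniformly bounded edge weights on a graph that differs from the full lattice only by a finite (four-edge) perturbation near the origin.

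\textbf{Key steps, in order.} First I would fix $\alpha^{**} \in (0,\alpha^*]$ small enough that the sign conditions above hold uniformly, as guaranteed by continuity of $\bar{\theta}_{i,j}(\alpha)$ in $\alpha$ and the explicit centre-cell phase values listed before the lemma. Second, I would invoke the framework of \cite{Bramburger3} and \cite{SaloffCoste}: the decay estimates (\ref{LAlphaDecay}) for a graph Laplacian semigroup depend only on large-scale geometric invariants of the underlying graph — its volume growth and isoperimetric (Faber--Krahn / Nash-type) profile — which for this graph agree with those of $\mathbb{Z}^2$ because a finite modification near a single vertex cannot alter volume growth or the isoperimetric profile at large scales. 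Third, since $\mathbb{Z}^2$ has volume growth of order $n^2$, the on-diagonal heat-kernel decay is of order $t^{-1}$, which yields precisely the $\ell^1 \to \ell^p$ rate $(1+t)^{-1+1/p}$ and the corresponding gradient ($Q_p$) rate $(1+t)^{-2+1/p}$ appearing in (\ref{LAlphaDecay}) with $\eta = 1$, exactly as established for the unperturbed lattice operator $L$ with $d_1 = d_2 = 1$ in Lemma~\ref{lem:Hyp3}. Fourth, I would argue that the strictly positive, uniformly bounded edge weights $\cos(\bar{\theta}_{i',j'}(\alpha) - \bar{\theta}_{i,j}(\alpha))$ only change these estimates by constants that can be absorbed into $C_L$, uniformly over $\alpha \in [0,\alpha^{**}]$, since comparing a weighted Laplacian to an unweighted one on the same graph amounts to bounding the Dirichlet forms above and below by fixed multiples of each other.

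\textbf{The main obstacle} will be making the finite-modification argument rigorous at the level of the semigroup estimates rather than merely the geometry. The removal of the four centre edges changes the operator from a genuine graph Laplacian of the full lattice into that of a slightly different graph, and while the heat-kernel upper bounds of \cite{SaloffCoste} are stable under such compactly-supported perturbations, one must confirm that the specific form of the estimates in Hypothesis~\ref{hyp:LinearPhase} — in particular the sharp $Q_p$ gradient rate with $\eta = 1$ — survives. The cleanest route is to verify that the weighted graph satisfies the same Nash-type inequality and on-diagonal kernel bound as $\mathbb{Z}^2$, from which both lines of (\ref{LAlphaDecay}) follow by the general theory; the $\alpha$-uniformity is then immediate since the weights are bounded away from $0$ and $\infty$ uniformly on $[0,\alpha^{**}]$. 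A secondary subtlety is that the centre-cell edges carry weight $\cos(\pi/2) = 0$ and so are genuinely absent rather than merely small, but this is precisely why the correct comparison graph is the edge-deleted lattice of Figure~\ref{fig:Graph} and not the full lattice.
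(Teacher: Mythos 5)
Your proposal is correct and follows essentially the same route as the paper: the paper's proof simply observes that the conditions needed to obtain the heat-kernel estimates (\ref{RandomWalk}) were verified for this rotating-wave graph in \cite{Bramburger3}, and then repeats the computation of Lemma~\ref{lem:Hyp3} verbatim. Your additional detail --- the stability of volume growth and Nash-type inequalities under the finite (four-edge) modification of $\mathbb{Z}^2$, the Dirichlet-form comparison for the uniformly positive bounded weights, and the observation that the centre edges genuinely vanish since $\cos(\pi/2)=0$ --- is exactly the content that \cite{Bramburger3} supplies and that the paper cites rather than re-proves.
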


\begin{proof}
	This proof is exactly the same as the proof of Lemma~\ref{lem:Hyp3} since it was shown in \cite{Bramburger3} that the conditions to obtain (\ref{RandomWalk}) are also satisfied for this rotating wave. 
\end{proof} 

We now turn to confirming Hypothesis~\ref{hyp:pStar}. In the continuous spatial setting of rotating wave solutions to Lambda-Omega systems explored in \cite{Cohen} it was shown that as one moves away from the centre of the rotating wave the radial components converge to $a$, the root of the function $\lambda$. Hence, this would lead one to conjecture that the same happens in the lattice case as well and this can easily be seen in the numerical solution presented in Figure~\ref{fig:RotWave}. Unfortunately a complete analytic proof of this remains elusive, and therefore we restrict ourselves to numerical investigations in an effort to at least conjecture that this hypothesis does indeed hold for this rotating wave solution. We define the following quantities 
\begin{equation} \label{MpFns}
	M_p^r(\alpha) := \sum_{(i,j)\in\mathbb{Z}^2}|r_{i,j}(\alpha) - 1|^{p}, \\
\end{equation}
for $p \geq 1$. The function $M_p^r$ allows one to measure the deviation from the background state $a$ of the rotating wave solution. Most importantly, we have
\[
	\bigg|\frac{r_{i',j'}(\alpha)}{r_{i,j}(\alpha)} - 1\bigg| \leq \frac{1}{r_{i,j}(\alpha)}|r_{i',j'}(\alpha)-r_{i,j}(\alpha)| \leq \frac{2}{a}(|r_{i',j'}(\alpha) - a| + |r_{i,j}(\alpha)-a|)
\] 
since we have assumed that $r_{i,j}(\alpha) \leq \frac{a}{2}$ for all $(i,j)\in\mathbb{Z}^2$. Hence, there exists a constant $C > 0$ such that 
\[
	\sum_{(i,j)\in\mathbb{Z}^2}\sum_{i',j'}\bigg|\frac{\bar{r}_{i',j'}(\alpha)}{\bar{r}_{i,j}(\alpha)} - 1\bigg|^{p} \leq CM_p^r(\alpha),	
\]
for all $p \geq 1$ and determining a bound on $M_p^r(\alpha)$ for some $p \geq 1$ necessarily implies that Hypothesis~\ref{hyp:pStar} holds for this solution.  

Using MATLAB we are able to generate the rotating wave solution on a finite $N\times N$ lattice with Neumann boundary conditions, for increasing $N$, with $\lambda(R) = 1 - R^2$. This allows one to conjecture that there exists $p \geq 1$ such that $M_p^r(\alpha) < \infty$ for sufficiently small $\alpha \geq 0$, and hence provide heuristic evidence that Hypothesis~\ref{hyp:pStar} holds, at least for this choice of $\lambda$. In Figure~\ref{fig:Hyp4} we provide plots of the quantities $M_1^r(\alpha)$ and $M_5^r(\alpha)$ for $\alpha = 0.1,0.5,1.0$ for the rotating wave solution simulated on a lattice of size $N \times N$, with $N$ increasing by ten from $N = 10$ to $N = 200$. From Figure~\ref{fig:Hyp4} one infers that the quantity $M_1^r(\alpha)$ should not be finite in the infinite lattice limit ($N \to \infty$), and hence we cannot guarantee that $p^*$ can be taken to be $1$ in Hypothesis~\ref{hyp:pStar}. If $p^* > 1$ then we would find that the decay of the $Q_p$ semi-norms of the solutions given in Theorem~\ref{thm:PhaseDecay} are not asymptotically equivalent to the estimates in Hypothesis~\ref{hyp:LinearPhase} for large $p > 1$.  

\begin{figure} 
	\centering
	\includegraphics[height=0.37\textwidth]{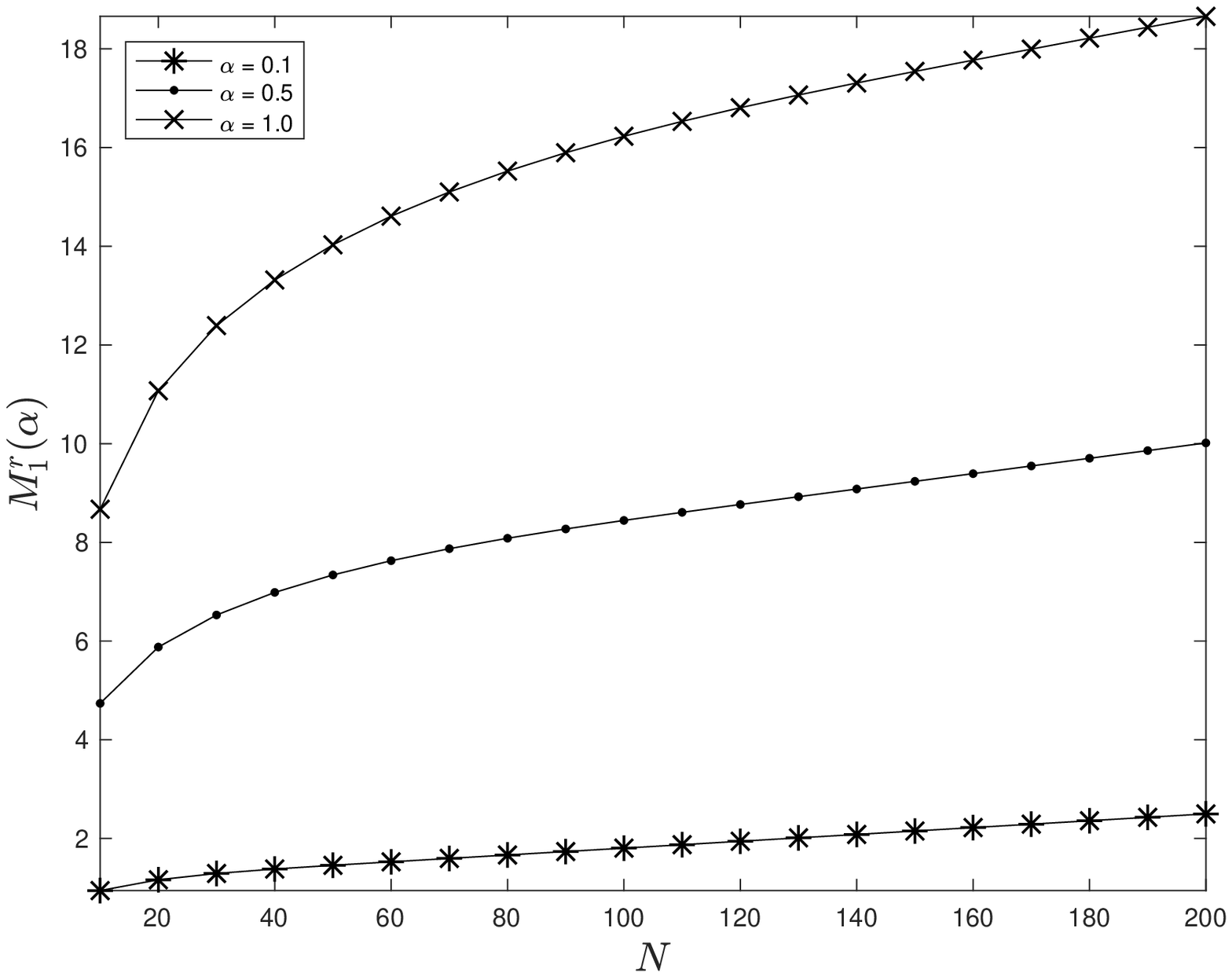}
	\includegraphics[height=0.37\textwidth]{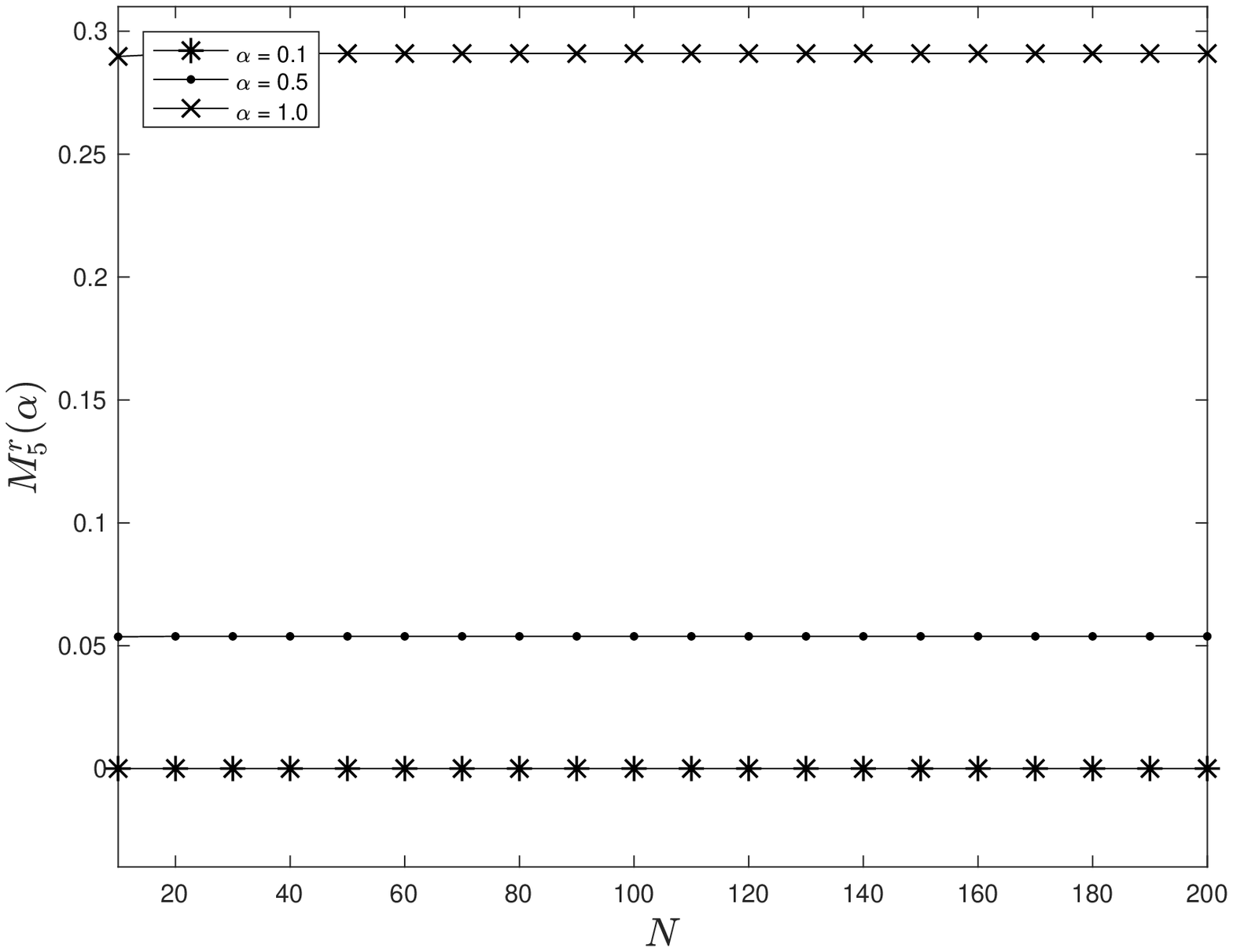} 
	\caption{Provided are plots of the quantities (left) $M_1^r(\alpha)$ and (right) $M_5^r(\alpha)$, defined in (\ref{MpFns}), for $\alpha = 0.1,0.5,1.0$ for the rotating wave solution simulated on a lattice of size $N \times N$. From these plots one conjectures that as $N \to \infty$ the quantity $M_1^r(\alpha)$ is unbounded and that $M_5^r(\alpha)$ remains bounded.}
	\label{fig:Hyp4}
\end{figure} 

Since we have conjectured that $M_1^r(\alpha)$ is unbounded, the right image in Figure~\ref{fig:Hyp4} provides an analogous plot for $M_5^r(\alpha)$. The choice for providing approximations of $M_p^r(\alpha)$ with $p =5$ is simply due to the fact that we have chosen $p > 1$ sufficiently large to see that the plots appear to level off at relatively small lattice sizes. Figure~\ref{fig:Hyp4} leads one to conjecture that upon moving to the infinite lattice limit both $M_5^r(\alpha)$ is finite, at least for the simulated values $\alpha = 0.1,0.5,1.0$. Hence, we see that these numerical investigations lead one to conjecture that Hypothesis~\ref{hyp:pStar} is true for some $p^* \in (1,5]$, but this would still require a full analytical treatment.

We conclude this section with a heuristic argument that $M_p^r(\alpha)$ should in fact be finite for sufficiently small $\alpha \geq 0$ and all $p > 1$. In \cite{ErmentroutSpiral2} the author heuristically regards a rotating wave solution in this discrete spatial context as a collection of nested rings, so that a $2N\times 2N$ array is comprised of $N$ concentric rings. The inner core is comprised of four elements (representing the centre of the rotating wave described above), the next wraps around this inner core and has length twelve, and so on. This analogy easily extends to the case when $N \to \infty$, where we now have a countable infinity of nested rings concentrically wrapped around each other, starting with the inner core of the centre four cells. Each concentric ring has $l_n = 8n-4$ elements for $n \in\{ 1,2,3,\dots\}$. When these concentric rings are uncoupled from one another we can find a rotating wave solution such that the difference between two consecutive phases in the ring is exactly $\pm2\pi/l_n$. 

In \cite{ErmentroutSpiral2} the author argues that when the concentric rings are coupled together, the solution should be similar. Indeed, in numerical simulations one can observe that as one moves away from the centre of rotation, the solution about concentric rings appear to converge to a solution for which the difference between two consecutive phases in the ring is exactly $\pm 2\pi/l_n$. If this were true, then we would have 
\begin{equation}\label{ThetaAsymptotics}
	|\bar{\theta}_{i',j'}(\alpha) - \bar{\theta}_{i,j}(\alpha)| = \mathcal{O}\bigg(\frac{1}{n}\bigg),
\end{equation}  
for all four nearest-neighbours $(i',j')$, assuming that $\bar{\theta}_{i,j}(\alpha)$ belongs to the ring of length $l_n$. Then using (\ref{FullPolarLattice}) we can expand $\bar{r}_{i,j}(\alpha)$ as a asymptotic series in $\alpha$ to see that 
\[
	\bar{r}_{i,j}(\alpha) = a + \frac{\alpha}{\lambda'(a)}\sum_{i',j'}[1 - \cos(\bar{\theta}_{i',j'}(0) - \bar{\theta}_{i,j}(0))] + \mathcal{O}(\alpha^2),
\]
for $\alpha \geq 0$ sufficiently small. The asymptotic relation (\ref{ThetaAsymptotics}) implies that 
\[
	\bar{r}_{i,j}(\alpha) = a + \mathcal{O}\bigg(\frac{\alpha}{n^2}\bigg),	
\] 
assuming that $\bar{r}_{i,j}(\alpha)$ belongs to the ring of length $l_n$. Therefore, since $l_n = \mathcal{O}(n)$, we would have that 
\[
	M_p^r(\alpha) \sim \alpha^p \sum_{n = 1}^\infty \frac{1}{n^{2p - 1}} 
\]
since we first sum over all $\bar{r}_{i,j}(\alpha)$ belonging to each ring $l_n$, giving the $1/n^{2p}$ term, and then over all rings indexed by $n \geq 1$. Hence, this heuristic argument implies that this rotating wave solution should satisfy (\ref{pStar}) for every $p^* > 1$. Although we do not have a proof of this fact, it appears to be consistent with the current state of the literature on rotating wave solutions in such a discrete spatial framework.

\subsection{Proof of Lemma~\ref{lem:Hyp3}}\label{subsec:Proof}

In this section we provide the proof of Lemma~\ref{lem:Hyp3}. Throughout this proof we will consider the infinite-dimensional linear ordinary differential equation 
\[
	\dot{x} = Lx,
\]
where $L$ is the linear operator acting on the sequences $x = \{x_{i,j}\}_{(i,j)\in\mathbb{Z}^2}$ by
\[
	[Lx]_{i,j} = d_1(x_{i+1,j} + x_{i-1,j} - 2x_{i,j}) + d_2(x_{i,j+1} + x_{i,j-1} - 2x_{i,j})
\]
for all $(i,j)\in\mathbb{Z}^2$. Throughout this proof we take $d_1,d_2 > 0$ to be fixed constants. We note that the work of \cite{Bramburger3} takes care of the $\ell^p$ norm decay, and hence we work exclusively on proving the $Q_p$ semi-norm decay. 

Now, the solution $x(t)$ to $\dot{x} = Lx$ is given explicitly by
\[
	x_{i,j}(t) = \sum_{(n,m)\in\mathbb{Z}^2} g_{i,j,n,m}(t)x_{n,m}^0
\]  
for all $(i,j)\in\mathbb{Z}^2$ and $x^0 =\{x_{i,j}^0\}_{(i,j)\in\mathbb{Z}^2} \in \ell^1$. Furthermore, there exists a uniform constants $C_g,\eta > 0$ such that 
\begin{equation}\label{RandomWalk}
	\begin{split}
		|g_{i,j,n,m}(t)| &\leq C_g(1+t)^{-1}\mathrm{e}^{-C_g\frac{(i - n)^2 + (j - m)^2}{t}}, \\
		|g_{i',j',n,m}(t) - g_{i,j,n,m}(t)| &\leq C_g(1+t)^{-1} g_{i,j,n,m}(2t),
	\end{split}
\end{equation}
for all $(i,j),(n,m)\in\mathbb{Z}^2$, which follow from the work of \cite{Delmotte} and \cite[Theorem~5.4.12]{SaloffCoste}, respectively. Then, using (\ref{RandomWalk}) we have 
\[
	\begin{split}
		Q_1(x(t)) &= \sum_{(i,j)\in\mathbb{Z}^2}\sum_{i',j'}|x_{i',j'}(t) - x_{i,j}(t)| \\
		&\leq \sum_{(i,j)\in\mathbb{Z}^2}\sum_{i',j'}\sum_{(n,m)\in\mathbb{Z}^2} |g_{i',j',n,m}(t) - g_{i,j,n,m}(t)||x^0_{n,m}| \\
		&\leq C_g(1+t)^{-1}\sum_{(i,j)\in\mathbb{Z}^2}\sum_{i',j'}\sum_{(n,m)\in\mathbb{Z}^2} |g_{i,j,n,m}(2t)||x^0_{n,m}| \\
		&\leq 4C_g(1+t)^{-1}\|\tilde{x}_{i,j}(t)\|_1,
	\end{split}
\]
where the multiple $4$ comes from the fact that each $(i,j)\in\mathbb{Z}^2$ has exactly four nearest-neighbours $(i',j')$ and $\tilde{x}(t) = \{\tilde{x}_{i,j}\}_{(i,j)\in\mathbb{Z}^2}$ represents the solution of $\dot{x} = Lx$ with initial condition $|x^0| = \{|x_{i,j}|\}_{(i,j)\in\mathbb{Z}^2} \in \ell^1$. Then, since there exists $C_L > 0$ such that 
\[
	\|\tilde{x}_{i,j}(t)\|_1 \leq C_L\|x^0\|_1
\]
from the fact that $x^0$ and $|x^0|$ have the same $\ell^1$ norm, it follows that 
\[
	Q_1(x(t)) \leq 4C_gC_L(1+t)^{-1}\|x^0\|_1. 
\]
This gives the bound on $Q_1$.

The $Q_\infty$ decay is significantly easier in that we have 
\[
	\begin{split}
	|x_{i',j'}(t) - x_{i,j}(t)| &\leq \sup_{(i,j) \in \mathbb{Z}^2}\sum_{(n,m)\in\mathbb{Z}^2} |g_{i',j',n,m}(t) - g_{i,j,n,m}(t)||x^0_{n,m}| \\
	&\leq C_g(1+t)^{-1} \sup_{(i,j) \in \mathbb{Z}^2}\sum_{i',j'}\sum_{(n,m)\in\mathbb{Z}^2}|g_{i,j,n,m}(2t)||x^0_{n,m}| \\ 
	&\leq 4C_g^2(1+2t)^{-2} \|x^0\|_1  
	\end{split}
\]
for all $(i,j) \in \mathbb{Z}^2$. This then gives that 
\[
	Q_\infty(x(t)) \leq 4C_g^2(1+t)^{-2}\|x^0\|_1, 	
\]
for all $x^0 \in \ell^1$, thus establishing the bound on $Q_\infty$. From here we can interpolate over $p \in [1,\infty]$ to arrive at the desired result. This completes the proof.

\section{Existence and Stability of an Invariant Manifold}\label{sec:InvMan} 

Much of this section extends the work of \cite[\S VII]{Hale} to our particular situation in infinite dimensions. Throughout this section we assume that Hypothesis~\ref{hyp:LambdaOmega} and \ref{hyp:PolarSoln} are true, but for convenience we will not explicitly state that they have been assumed in the statements of our results. Subsection~\ref{subsec:ManExistence} first deals with the existence of the invariant manifold and then Subsection~\ref{subsec:ManStability} proves the stability.

\subsection{Existence of the Invariant Manifold}\label{subsec:ManExistence} 

Let us define the vector space 
\begin{equation}
	X = \{\sigma:\ell^1 \to \ell^1:\ \sigma(0) = 0,\ \sup_{\psi \in \ell^1}\|\sigma(\psi)\|_\infty < \infty\},
\end{equation}
along with the associated norm on $X$ given by
\begin{equation}\label{XNorm} 
	\|\sigma\|_X := \sup_{\psi\in\ell^1} \|\sigma(\psi)\|_\infty.
\end{equation}
Notice that this norm is indeed well-defined because $\ell^1 \subsetneq \ell^\infty$ implies that $\|\sigma(\psi)\|_\infty < \infty$ for all $\psi \in \ell^1$. Our interest will lie in the following subsets of $X$:
\begin{equation}\label{XSpace}
	\begin{split}
	\tilde{X}(D,\Delta) = \{\sigma:\ell^1 \to \ell^1:\ \|\sigma\|_X \leq D,\  &\|\sigma(\psi) - \sigma(\tilde{\psi})\|_p \leq \Delta Q_p(\psi - \tilde{\psi})\ \forall \psi,\tilde{\psi} \in \ell^1\}.
	\end{split}
\end{equation}
where $D,\Delta > 0$ are constants to be specified shortly. Here the norm on $X$ will simply be used to obtain a contraction of an appropriate mapping whose fixed points are exactly invariant manifolds of (\ref{FullODE}). Hence, what matters is the boundedness and Lipschtz properties of elements of $\tilde{X}(D,\Delta)$ since they will give the desired properties (\ref{ManifoldProperties}) of the invariant manifold. We provide the following result. 

\begin{lem} \label{lem:X_Complete} 
	For every $D,\Delta > 0$, $\tilde{X}(D,\Delta)$ is complete with respect to the norm $\|\cdot\|_X$.	
\end{lem}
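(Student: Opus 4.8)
The plan is to show that $\tilde{X}(D,\Delta)$ is a closed subset of the Banach space $(X, \|\cdot\|_X)$; since a closed subset of a complete metric space is itself complete, this suffices. First I would verify that $(X,\|\cdot\|_X)$ is genuinely complete, so that closedness is the right notion to chase. Given a Cauchy sequence $\{\sigma_n\}$ in $X$, for each fixed $\psi \in \ell^1$ the values $\{\sigma_n(\psi)\}$ form a Cauchy sequence in $\ell^\infty$ (since $\|\sigma_n(\psi) - \sigma_m(\psi)\|_\infty \leq \|\sigma_n - \sigma_m\|_X$), and so they converge to some limit which I define to be $\sigma(\psi)$. The uniformity of the $\|\cdot\|_X$-Cauchy condition then upgrades this pointwise convergence to convergence in the $\|\cdot\|_X$ norm, and $\sigma(0)=0$ together with $\sup_\psi\|\sigma(\psi)\|_\infty<\infty$ pass to the limit, giving $\sigma \in X$.

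Next I would take a sequence $\{\sigma_n\} \subseteq \tilde{X}(D,\Delta)$ converging in $\|\cdot\|_X$ to some $\sigma \in X$, and show the two defining constraints of $\tilde{X}(D,\Delta)$ survive in the limit. The bound $\|\sigma_n\|_X \leq D$ passes immediately to $\|\sigma\|_X \leq D$ since the norm is continuous. The more delicate point is the Lipschitz-type estimate $\|\sigma(\psi) - \sigma(\tilde\psi)\|_p \leq \Delta\, Q_p(\psi - \tilde\psi)$ for all $p \in [1,\infty]$, because $\|\cdot\|_X$ only controls the $\ell^\infty$ norm of the outputs, whereas this inequality is stated in every $\ell^p$ norm. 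The key observation resolving this tension is that $\|\cdot\|_X$-convergence means $\sup_\psi\|\sigma_n(\psi)-\sigma(\psi)\|_\infty \to 0$, hence in particular $\sigma_n(\psi) \to \sigma(\psi)$ in $\ell^\infty$, i.e.\ coordinatewise, for each fixed $\psi$.

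From here the argument runs through Fatou's lemma applied coordinatewise. For fixed $\psi,\tilde\psi$ and fixed $p \in [1,\infty)$, each coordinate satisfies $|\sigma_n(\psi)_{i,j} - \sigma_n(\tilde\psi)_{i,j}| \to |\sigma(\psi)_{i,j} - \sigma(\tilde\psi)_{i,j}|$, and applying Fatou to the sum over $(i,j) \in \mathbb{Z}^2$ gives
\[
	\|\sigma(\psi)-\sigma(\tilde\psi)\|_p^p \leq \liminf_{n\to\infty}\|\sigma_n(\psi)-\sigma_n(\tilde\psi)\|_p^p \leq \big(\Delta\, Q_p(\psi-\tilde\psi)\big)^p,
\]
which yields the desired inequality after taking $p$-th roots; the case $p = \infty$ follows by taking the supremum over coordinates, or by letting $p \to \infty$. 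Thus $\sigma \in \tilde{X}(D,\Delta)$, establishing closedness and hence completeness.

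The main obstacle I anticipate is exactly this mismatch of norms: the ambient completeness is measured only in the $\ell^\infty$-based $\|\cdot\|_X$ norm, while the set $\tilde{X}(D,\Delta)$ is cut out by conditions in all $\ell^p$ norms, and $\ell^\infty$-convergence does not a priori control $\ell^p$ quantities. The resolution is that $\|\cdot\|_X$-convergence implies coordinatewise convergence, which is precisely the hypothesis that Fatou's lemma requires; the lower-semicontinuity of the $\ell^p$ norms under coordinatewise limits then carries the finite-$p$ Lipschitz bounds through for free, with no need for convergence in the $\ell^p$ norms themselves.
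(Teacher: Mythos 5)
Your core computation (coordinatewise convergence plus Fatou's lemma to push the $\ell^p$ Lipschitz bounds through the limit) is sound, and it actually treats the norm-mismatch issue more explicitly than the paper does, which dismisses the Lipschitz property as following ``in a nearly identical way'' to the sup bound. However, the scaffolding you hang this on --- that $(X,\|\cdot\|_X)$ is complete, so that closedness of $\tilde{X}(D,\Delta)$ suffices --- is false, and the failure is exactly at the step you summarize as ``$\sigma(0)=0$ together with $\sup_\psi\|\sigma(\psi)\|_\infty<\infty$ pass to the limit, giving $\sigma \in X$.'' Membership in $X$ also requires that $\sigma(\psi)\in\ell^1$ for every $\psi\in\ell^1$, and this does \emph{not} survive $\|\cdot\|_X$-limits: take $x\in\ell^\infty\setminus\ell^1$ with $x_{i,j}\to 0$ at infinity (e.g.\ $x_{i,j}=(1+i^2+j^2)^{-1/2}$), let $x^n$ be its truncation to $\max\{|i|,|j|\}\leq n$, and set $\sigma_n(\psi)=\min\{\|\psi\|_1,1\}\,x^n$. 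Each $\sigma_n$ lies in $X$, and the sequence is $\|\cdot\|_X$-Cauchy since $\|\sigma_n-\sigma_m\|_X\leq\|x^n-x^m\|_\infty\to 0$, but the pointwise limit $\psi\mapsto\min\{\|\psi\|_1,1\}\,x$ fails to be $\ell^1$-valued, so it is not in $X$. (Consistently with the lemma, these $\sigma_n$ belong to no common $\tilde{X}(D,\Delta)$: since $\|x^n\|_1\to\infty$, their Lipschitz constants blow up with $n$.)

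The repair is short and uses only tools you already invoke: drop the ambient space entirely and argue directly that a Cauchy sequence $\{\sigma_n\}\subset\tilde{X}(D,\Delta)$ has its $\|\cdot\|_X$-limit in $\tilde{X}(D,\Delta)$. Your Fatou argument gives $\|\sigma(\psi)-\sigma(\tilde{\psi})\|_p\leq\Delta Q_p(\psi-\tilde{\psi})$ for all $p$; since $\sigma_n(0)=0$ for all $n$ forces $\sigma(0)=0$, taking $\tilde{\psi}=0$ and $p=1$ yields $\|\sigma(\psi)\|_1\leq\Delta Q_1(\psi)\leq 8\Delta\|\psi\|_1<\infty$ by Lemma~\ref{lem:NormBnds}, so the limit genuinely maps $\ell^1$ into $\ell^1$ and hence belongs to $\tilde{X}(D,\Delta)$. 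This is precisely how the paper closes its own proof: it is the uniform Lipschitz bound, not the ambient norm, that recovers $\ell^1$-valuedness of the limit, and that observation cannot be outsourced to a completeness claim about $X$.
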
  

\begin{proof}
	Let us fix $D,\Delta > 0$. Then, consider a Cauchy sequence $\{\sigma_n\}_{n=1}^\infty \subset \tilde{X}(D,\Delta)$. The existence of a pointwise limit $\sigma$ converging in the $\ell^\infty$ norm follows in a straightforward way from the fact that $\|\sigma_n(\psi)\|_\infty \leq D$ for all $\psi \in \ell^1$. We now wish to show that $\sigma \in \tilde{X}(D,\Delta)$.
	
	For a contradiction, let us assume that there exists $\psi \in \ell^1$ such that $\|\sigma(\psi)\|_\infty > D$. Set 
	\[
		\varepsilon := \frac{1}{2}(\|\sigma(\psi)\|_\infty - D) > 0,
	\] 
	and take $N \geq 1$ sufficiently large to guarantee that $\|\sigma_n(\psi) - \sigma(\psi)\|_\infty < \varepsilon$. This then gives
	\[
		\|\sigma(\psi)\|_\infty \leq \|\sigma_n(\psi) - \sigma(\psi)\|_\infty + \|\sigma_n(\psi)\|_\infty < \varepsilon + D = \frac{1}{2}(\|\sigma(\psi)\|_\infty - D) + D = \frac{1}{2}\|\sigma(\psi)\|_\infty + \frac{1}{2}D, 
	\]
	where we have used the fact that $\sigma_n \in \tilde{X}(D,\Delta)$ implies that $\|\sigma_n(\psi)\|_\infty \leq D$. But then rearranging this expression gives
	\[
		\frac{1}{2}\|\sigma(\psi)\|_\infty \leq \frac{1}{2}D \implies \|\sigma(\psi)\|_\infty \leq D,	
	\] 
	which is a contradiction. This therefore shows that $\|\sigma\|_X \leq D$.
	
	To show that $\|\sigma(\psi) - \sigma(\tilde{\psi})\|_p \leq \Delta Q_p(\psi - \tilde{\psi})$ for every $\psi,\tilde{\psi} \in \ell^1$, we proceed in a nearly identical way to our previous proof showing that $\|\sigma\|_X \leq D$, and therefore this proof is omitted. We do remark that since $\sigma_n(0) = 0$ for every $n \geq 1$, we necessarily have $\sigma(0) = 0$, and since $\sigma$ is such that 
	\[
		\|\sigma(\psi) - \sigma(\tilde{\psi})\|_1 \leq \Delta Q_1(\psi - \tilde{\psi})
	\] 
	for every  $\psi,\tilde{\psi} \in \ell^1$, we may take $\tilde{\psi} = 0$ to find that
	\[
		\|\sigma(\psi)\|_1 \leq \Delta Q_1(\psi),
	\]
	for all $\psi \in \ell^1$. Since Lemma~\ref{lem:NormBnds} details that $Q_1(\psi) \leq 8\|\psi\|_1 < \infty$, we then have that $\sigma(\psi) \in \ell^1$ for all $\psi \in \ell^1$. Therefore, $\sigma \in \tilde{X}(D,\Delta)$, completing the proof. 
\end{proof} 

Now, let us consider arbitrary $0 < D \leq \frac{a}{4}$ and a function $\sigma \in \tilde{X}(D,\Delta)$. We will denote $\psi^*(t;\psi^0,\sigma)$ to be the solution of the initial value problem
\begin{equation} \label{InvManIVP} 
	\begin{cases}
		\dot{\psi} = \alpha G(\sigma(\psi),\psi,\alpha), \\
		\psi(0) = \psi^0,
	\end{cases}
\end{equation}
where we use the superscript notation $\psi^0$ so as not to confuse with the subscripts relating to the indices of the lattice $\mathbb{Z}^2$. Note that we do indeed require the condition $D \leq \frac{a}{4}$ to guarantee that $\bar{r}_{i,j}(\alpha) + \sigma_{i,j}(\psi) \geq \frac{a}{4}$, to avoid the singularity in the phase equations when $\bar{r}_{i,j}(\alpha) + \sigma_{i,j}(\psi) = 0$. We also note that taking $\psi^0 = 0$ results in the solution $\psi^*(t;\psi^0,\sigma) = 0$ for all $t\geq 0$ and $\sigma \in \tilde{X}(D,\Delta)$ since $\sigma(0) = 0$ and $G(0,0,\alpha) = 0$.  

\begin{lem} \label{lem:InvManPhase} 
	There exists a constant $C_1 > 0$ such that for every $D \in (0,\frac{a}{4}]$, $\Delta \in (0,1]$, $\sigma,\tilde{\sigma} \in \tilde{X}(D,\Delta)$, $\psi,\tilde{\psi} \in \ell^1$, and $p \in [1,\infty]$ we have the following:
	\begin{subequations}
		\begin{align}
			Q_p(\psi^*(t;\psi,\sigma) - \psi^*(t;\tilde{\psi},\sigma)) &\leq e^{\alpha C_1|t|}Q_p(\psi - \tilde{\psi}), \label{PhaseBnd1} \\
			\|\psi^*(t;\psi,\sigma) - \psi^*(t;\psi,\tilde{\sigma})\|_\infty &\leq e^{\alpha C_1|t|}\|\sigma - \tilde{\sigma}\|_X, \label{PhaseBnd2}
		\end{align}
	\end{subequations}
	for all $t \in \mathbb{R}$.
\end{lem}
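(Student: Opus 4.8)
The plan is to treat both inequalities as Gronwall estimates for the integrated difference equations, the only real content being a pair of Lipschitz bounds on the composed vector field $\psi \mapsto \alpha G(\sigma(\psi),\psi,\alpha)$. Write $\phi(t) = \psi^*(t;\psi,\sigma)$ and $\tilde\phi(t) = \psi^*(t;\tilde\psi,\sigma)$ for the first bound, and $\phi(t) = \psi^*(t;\psi,\sigma)$, $\hat\phi(t) = \psi^*(t;\psi,\tilde\sigma)$ for the second. In each case I would subtract the corresponding integral equations; since $Q_p$ is a seminorm it obeys the triangle inequality and a Minkowski integral inequality, so passing $Q_p$ (resp. $\|\cdot\|_\infty$) through the integral reduces everything to pointwise-in-time Lipschitz estimates on $G$, after which a standard Gronwall argument produces the exponential factor $e^{\alpha C_1|t|}$. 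The factor $\alpha$ in front of $G$ is what makes the growth rate scale with $\alpha$, and the case $t < 0$ is handled identically by integrating over $[t,0]$.

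For inequality (\ref{PhaseBnd1}) I would establish the key estimate
\[
Q_p\big(G(\sigma(\phi),\phi,\alpha) - G(\sigma(\tilde\phi),\tilde\phi,\alpha)\big) \leq C_1\, Q_p(\phi - \tilde\phi),
\]
uniformly over $D \in (0,\tfrac{a}{4}]$, $\Delta \in (0,1]$, $\alpha \in [0,\alpha^*]$ and $p \in [1,\infty]$. To do so, decompose $G$ into its radial-ratio/sine part and its $\omega_1$ part and estimate the difference termwise. The sine terms depend on the phases only through the differences $\phi_{i',j'} - \phi_{i,j}$, so the mean value theorem converts them into expressions linear in $w_{i',j'} - w_{i,j}$ with $w = \phi - \tilde\phi$, which is exactly the content of $Q_p(w)$; the radial ratios and the $\omega_1$ term are Lipschitz once $D \leq \tfrac{a}{4}$ keeps $\bar r_{i,j}(\alpha) + \sigma_{i,j} \geq \tfrac{a}{4}$ away from the singularity, and their contributions are controlled by $\|\sigma(\phi) - \sigma(\tilde\phi)\|_p \leq \Delta Q_p(w)$ using the defining Lipschitz property of $\tilde X(D,\Delta)$. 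Lemma~\ref{lem:NormBnds} and Lemma~\ref{lem:AltGradient} are then used to collapse the various nearest-neighbour sums back into multiples of $Q_p(w)$, and Gronwall applied to $Q_p(\phi(t) - \tilde\phi(t)) \leq Q_p(\psi - \tilde\psi) + \alpha\int_0^{|t|} C_1 Q_p(\phi - \tilde\phi)\,d\tau$ finishes the bound.

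For inequality (\ref{PhaseBnd2}) the solutions share the initial datum $\psi$, so the initial difference vanishes and the driving term comes solely from $\sigma \neq \tilde\sigma$. I would split
\[
G(\sigma(\phi),\phi,\alpha) - G(\tilde\sigma(\hat\phi),\hat\phi,\alpha) = \big[G(\sigma(\phi),\phi,\alpha) - G(\tilde\sigma(\phi),\phi,\alpha)\big] + \big[G(\tilde\sigma(\phi),\phi,\alpha) - G(\tilde\sigma(\hat\phi),\hat\phi,\alpha)\big].
\]
The first bracket is bounded in $\ell^\infty$ by a constant times $\|\sigma(\phi) - \tilde\sigma(\phi)\|_\infty \leq \|\sigma - \tilde\sigma\|_X$ using Lipschitz dependence of $G$ on its radial argument (again available because $D \leq \tfrac a4$). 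The second bracket is bounded by a constant times $\|\phi - \hat\phi\|_\infty$: $G$ is Lipschitz in its phase argument, and the dependence through $\tilde\sigma(\phi) - \tilde\sigma(\hat\phi)$ is controlled using $\|\tilde\sigma(\phi) - \tilde\sigma(\hat\phi)\|_\infty \leq \Delta Q_\infty(\phi - \hat\phi) \leq 8\Delta\|\phi - \hat\phi\|_\infty$ via Lemma~\ref{lem:NormBnds}. Feeding these into $\|\phi(t) - \hat\phi(t)\|_\infty \leq \alpha\int_0^{|t|} (C\|\sigma-\tilde\sigma\|_X + C\|\phi - \hat\phi\|_\infty)\,d\tau$ and applying Gronwall gives (\ref{PhaseBnd2}) with a possibly enlarged $C_1$.

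The main obstacle is the $Q_p$-Lipschitz estimate on $G$ underlying (\ref{PhaseBnd1}): because $Q_p$ measures nearest-neighbour differences, it forces one to control \emph{second-order} differences $v_{i',j'} - v_{i,j}$ of the vector field, and the background phases $\bar\theta_{i,j}(\alpha)$ enter $G$ with genuine $(i,j)$-dependence, so the Lipschitz factors produced by the mean value theorem themselves vary over the lattice. The delicate point is to organize the bookkeeping so that these $(i,j)$-dependent factors are uniformly bounded (they are, being cosines and ratios bounded away from the singularity) and so that every resulting nearest-neighbour sum can be re-expressed, via Lemmas~\ref{lem:NormBnds} and~\ref{lem:AltGradient} together with the Lipschitz property of $\sigma$, purely in terms of $Q_p(\phi - \tilde\phi)$. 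Once this uniform estimate is in hand the two Gronwall arguments are routine, and the resulting a priori bounds simultaneously guarantee that the solutions $\psi^*$ exist for all $t \in \mathbb{R}$.
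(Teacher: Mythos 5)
Your proposal is correct and follows essentially the same route as the paper's proof: both reduce the two inequalities to a pointwise Lipschitz estimate on $G$ (exploiting that the sine terms see the phases only through nearest-neighbour differences, that the radial ratios and $\omega_1$ term are Lipschitz once $D\leq\frac{a}{4}$ keeps the denominators away from zero, and that $\|\sigma(\psi)-\sigma(\tilde{\psi})\|_p\leq\Delta Q_p(\psi-\tilde{\psi})$), then collapse the resulting sums via Lemmas~\ref{lem:NormBnds} and \ref{lem:AltGradient} and apply Gronwall to the integral formulation, absorbing the linear-in-$t$ prefactor of the second bound into an enlarged exponential rate. The only cosmetic difference is that you state the key estimate directly as a $Q_p$-to-$Q_p$ bound on the $G$-difference, whereas the paper first bounds its $\ell^p$ norm and then converts with $Q_p(\cdot)\leq 8\|\cdot\|_p$.
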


\begin{proof}
	We will only prove the inequalities for $t \geq 0$, since the proof for $t < 0$ is handled in a nearly identical way by introducing the temporal transformation $t \to -t$ and taking advantage of the fact that the differential equation is autonomous. Then to begin, recall that $G(0,0,\alpha) = 0$, by definition of $s$ and $\psi$. Furthermore, for each $(i,j) \in \mathbb{Z}^2$ we have that $G_{i,j}(s,\psi,\alpha)$ vanishes when $s_{i,j} = 0$, $s_{i',j'} = 0$ and $(\psi_{i',j'} - \psi_{i,j}) = 0$, for all $(i',j')$. Hence, from the fact that $G_{i,j}(s,\psi,\alpha)$ is globally Lipschitz in $\psi$, uniformly in $(i,j)$, and since $\|\sigma(\psi)\|_\infty$ is uniformly bounded by $D \leq \frac{a}{4}$, we find that there exists a $C > 0$, uniform in $D \in (0,\frac{a}{4}]$, such that
	\begin{equation} \label{PhaseSolnBnd}
		\begin{split}
		|G_{i,j}(\sigma(\psi),\psi,\alpha) - G_{i,j}(\tilde{\sigma}(\tilde{\psi}),\tilde{\psi},\alpha)| \leq &C\bigg(|\sigma_{i,j}(\psi) - \tilde{\sigma}_{i,j}(\tilde{\psi})| + \sum_{i',j'} |\sigma_{i',j'}(\psi) - \tilde{\sigma}_{i',j'}(\tilde{\psi})|\bigg) \\
		&+ \sum_{i',j'} |(\psi_{i',j'} - \psi_{i,j}) - (\tilde{\psi}_{i',j'} - \tilde{\psi}_{i,j})|,
		\end{split}
	\end{equation}
	for all $\alpha \in [0,\alpha^*]$.
	
	Now starting with the first bound we wish to prove, begin by taking $\sigma = \tilde{\sigma}$. Then, from Lemma~\ref{lem:AltGradient} we have that 
	\[
		\bigg(\sum_{(i,j)\in\mathbb{Z}^2}\bigg(\sum_{i',j'} |(\psi_{i',j'} - \psi_{i,j}) - (\tilde{\psi}_{i',j'} - \tilde{\psi}_{i,j})|\bigg)^p\bigg)^\frac{1}{p} \leq 4Q_p(\psi - \tilde{\psi}),	
	\]
	for all $p \in [1,\infty)$. Hence, we have
	\[
		\begin{split}
		\|G(\sigma(\psi),\psi,\alpha) - G(\sigma(\tilde{\psi}),\tilde{\psi},\alpha)\|_p &\leq 5C\|\sigma(\psi) - \sigma(\tilde{\psi})\|_p + 4CQ_p(\psi - \tilde{\psi}), \\
			&\leq 5C\Delta Q_p(\psi - \tilde{\psi}) + 4CQ_p(\psi - \tilde{\psi}) \\
			&\leq (5\Delta + 4)CQ_p(\psi - \tilde{\psi}) \\
			&\leq 9CQ_p(\psi - \tilde{\psi}),
		\end{split}
	\]
	for all $p \in [1,\infty)$, since we have assumed $\Delta \leq 1$. Similarly, the case of $p = \infty$ gives
	\[
		\begin{split}
		\|G(\sigma(\psi),\psi,\alpha) - G(\sigma(\tilde{\psi}),\tilde{\psi},\alpha)\|_\infty &\leq 5C\|\sigma(\psi) - \sigma(\tilde{\psi})\|_\infty + CQ_\infty(\psi - \tilde{\psi}), \\
			&\leq 5C\Delta Q_\infty(\psi - \tilde{\psi}) + CQ_p(\psi - \tilde{\psi}) \\
			&\leq (5\Delta + 1)CQ_\infty(\psi - \tilde{\psi}) \\
			&\leq 6CQ_\infty(\psi - \tilde{\psi}), \\
			&< 9CQ_p(\psi - \tilde{\psi}),
		\end{split}
	\]
	so that
	\begin{equation}\label{PhaseSolnBnd2}
		\|G(\sigma(\psi),\psi,\alpha) - G(\sigma(\tilde{\psi}),\tilde{\psi},\alpha)\|_p \leq 9CQ_p(\psi - \tilde{\psi}),
	\end{equation}  
	for all $p \in [1,\infty]$ and $\psi,\tilde{\psi}\in\ell^1$.
	
	Then, if $\psi^*(t;\psi,\sigma)$ is a solution of (\ref{InvManIVP}), it therefore satisfies the integral form equation
	\begin{equation}\label{InvManIVPInt}
		\psi^*(t;\psi,\sigma) = \psi + \int_0^t G(\sigma(\psi^*(u;\psi,\sigma)),\psi^*(u;\psi,\sigma),\alpha) \mathrm{d}u. 
	\end{equation}
	Hence, using this integral formulation we obtain 
	\[
		\begin{split}
		Q_p(\psi^*(t;\psi,\sigma) - \psi^*(t;\tilde{\psi},\sigma)) &\leq Q_p(\psi - \tilde{\psi}) \\ &+ \int_0^t \alpha Q_p(G(\sigma(\psi^*(u;\psi,\sigma)),\psi^*(u;\psi,\sigma),\alpha) - G(\psi^*(u;\tilde{\psi},\sigma)),\psi^*(u;\tilde{\psi},\sigma),\alpha)) \mathrm{d}u \\
		&\leq Q_p(\psi - \tilde{\psi}) + \int_0^t 8\alpha \|\psi^*(u;\psi,\sigma) - \psi^*(u;\tilde{\psi},\sigma)\|_p\mathrm{d}u, \\ 
		&\leq Q_p(\psi - \tilde{\psi}) + \int_0^t 72C\alpha Q_p(\psi^*(u;\psi,\sigma) - \psi^*(u;\tilde{\psi},\sigma))\mathrm{d}u, \\ 
		\end{split}	
	\]
	where we have applied the righthand bound of Lemma~\ref{lem:NormBnds} followed by the bound (\ref{PhaseSolnBnd2}) in the final two steps, respectively. Then, using Gronwall's inequality we obtain
	\[
		Q_p(\psi^*(t;\psi,\sigma) - \psi^*(t;\tilde{\psi},\sigma)) \leq e^{72C\alpha t}Q_p(\psi - \tilde{\psi}).	
	\]  
	This proves the first bound stated in the lemma.
	
	Now, using the bound (\ref{PhaseSolnBnd}) again we have
	\[
		\begin{split}
		|G_{i,j}(\sigma(\psi),\psi,\alpha) - G_{i,j}(\tilde{\sigma}(\tilde{\psi}),\tilde{\psi},\alpha)| &\leq 5C\|\sigma(\psi) - \sigma(\tilde{\psi})\|_\infty  + C\sum_{i',j'} |\psi_{i',j'} - \tilde{\psi}_{i',j'}|, \\  
		&\leq 5C\|\sigma - \tilde{\sigma}\|_X + 4C\|\psi - \tilde{\psi}\|_\infty	
		\end{split}
	\]
	for all $(i,j) \in \mathbb{Z}^2$. Hence, using the integral formulation (\ref{InvManIVPInt}), we arrive at  
	\[
		\begin{split}
		|\psi_{i,j}^*(t;\psi,\sigma) - \psi_{i,j}^*(t;\psi,\tilde{\sigma})| &\leq \alpha\int_0^t|G(\sigma(\psi^*(u;\psi,\tilde{\sigma})),\psi^*(u;\psi,\sigma),\alpha) - G(\psi^*(u;\psi,\tilde{\sigma}),\psi^*(u;\psi,\tilde{\sigma}),\alpha)| \mathrm{d}u \\
		&\leq 5C\alpha\int_0^t \|\sigma - \tilde{\sigma}\|_X + \|\psi^*(u;\psi,\sigma) - \psi^*(u;\psi,\tilde{\sigma})\|_\infty \mathrm{d}u \\
		&\leq 5C\alpha \|\sigma - \tilde{\sigma}\|_X t + 4C\alpha\int_0^t\|\psi^*(u;\psi,\sigma) - \psi^*(u;\psi,\tilde{\sigma})\|_\infty \mathrm{d}u.
		\end{split}
	\]
	Taking the supremum over all $(i,j)\in\mathbb{Z}^2$ we arrive at
	\[
		\|\psi^*(t;\psi,\sigma) - \psi^*(t;\psi,\tilde{\sigma})\|_\infty \leq 5C\alpha \|\sigma - \tilde{\sigma}\|_X t + 4C\alpha\int_0^t\|\psi^*(u;\psi,\sigma) - \psi^*(u;\psi,\tilde{\sigma})\|_\infty du. 	
	\]
	We now apply Gronwall's inequality to arrive at the bound 
	\[
		\|\psi^*(t;\psi,\sigma) - \psi^*(t;\psi,\tilde{\sigma})\|_\infty \leq 5C\alpha t e^{4C\alpha t}\|\sigma - \tilde{\sigma}\|_X.	
	\] 
	Finally, there exists $C' > 0$ such that
	\[
		5C\alpha t e^{4C\alpha t} \leq e^{\alpha C' t},
	\] 
	for all $t \geq 0$ and $\alpha \in [0,\alpha^*]$. This completes the proof.
\end{proof} 

Our goal is now to obtain a mapping acting on the space $\tilde{X}(D,\Delta)$, for $D,\Delta > 0$ appropriately chosen, so that a fixed point is exactly an invariant manifold for the differential equations (\ref{FullODE}). To begin, let us rewrite 
\[
	\dot{s} = F(s,\psi,\alpha),
\] 
as
\[
	\dot{s} = a\lambda'(a)s + [F(s,\psi,\alpha)-a\lambda'(a)s].
\]
One should recall that the derivative of $F$ with respect to $s$ is $D_s\sigma(0,0,\alpha) = a\lambda'(a)I$, where $I$ is the identity mapping. We recall that $a\lambda'(a) < 0$ from Hypothesis~\ref{hyp:LambdaOmega}. Then, using the variation of constants formula we arrive at 
\[
	s(t) = e^{a\lambda'(a)(t-t_0)}s(t_0) + \int_{t_0}^te^{a\lambda'(a)(t-u)}[F(s(u),\psi(u),\alpha)-a\lambda'(a)s(u)]\mathrm{d}u. 	
\]
Assuming that $s$ belongs to $\tilde{X}(D,\Delta)$ for all $t$, it is bounded and we can therefore take $t_0 \to -\infty$ to arrive at 
\[
	s(t) = \int_{-\infty}^te^{a\lambda'(a)(t-u)}[F(s(u),\psi(u),\alpha)-a\lambda'(a)s(u)]\mathrm{d}u. 	
\]
Finally, taking $t = 0$ allows one to define a mapping $T$ with domain $\tilde{X}(D,\Delta)$, for appropriately chosen $D,\Delta >0$, given by
\begin{equation}\label{T1Mapping} 
	T_1\sigma(\psi) = \int_{-\infty}^0e^{-a\lambda'(a)u}[F(\sigma(\psi^*(u;\psi,\sigma)),\psi^*(u;\psi,\sigma),\alpha)-a\lambda'(a)\sigma(\psi^*(u;\psi,\sigma))]du. 
\end{equation} 
Fixed points of $T_1$ are exactly invariant manifolds of the full system (\ref{FullODE}). To understand this mapping, $\psi \in \ell^1$ is an initial condition of the flow governed by (\ref{InvManIVP}) using the function $\sigma$. This solution, denoted $\psi^*(t;\psi,\sigma)$, is then put into the radial component equation and we describe the flow of the radial component. If this flow matches the flow governed by the flow of the original input function $\sigma$, we have indeed obtained a flow-invariant invariant manifold for (\ref{InvManIVP}). For the ease of notation we will define 
\begin{equation} \label{FTilde}
	\tilde{F}(s,\psi,\alpha) = F(s,\psi,\alpha)-a\lambda'(a)s,
\end{equation}  
so that we can write
\[
	T_1\sigma(\psi) = \int_{-\infty}^0e^{-a\lambda'(a)u}\tilde{F}(\sigma(\psi^*(u;\psi,\sigma)),\psi^*(u;\psi,\sigma),\alpha)\mathrm{d}u. 	
\]
Notice that $\tilde{F}(0,\psi,\alpha) = F(0,\psi,\alpha)$ and therefore $\tilde{F}(0,0,\alpha) = 0$ for all $\alpha \in [0,\alpha^*]$.

\begin{lem} \label{lem:T1WellDefined} 
	There exists $\alpha_{X,1} > 0$ such that for each $\alpha \in [0,\alpha_{X,1}]$, $T_1:\tilde{X}(\sqrt{\alpha},\sqrt{\alpha}) \to \tilde{X}(\sqrt{\alpha},\sqrt{\alpha})$ is well-defined.
\end{lem}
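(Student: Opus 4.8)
The plan is to verify that for $\sigma \in \tilde{X}(\sqrt{\alpha},\sqrt{\alpha})$ the image $T_1\sigma$ again satisfies the three defining properties in (\ref{XSpace}): that $T_1\sigma(0)=0$, that $\|T_1\sigma\|_X \leq \sqrt{\alpha}$, and that $\|T_1\sigma(\psi)-T_1\sigma(\tilde{\psi})\|_p \leq \sqrt{\alpha}\,Q_p(\psi-\tilde{\psi})$ for all $p\in[1,\infty]$. Membership of $T_1\sigma(\psi)$ in $\ell^1$ then follows exactly as at the end of the proof of Lemma~\ref{lem:X_Complete}: taking $\tilde{\psi}=0$ in the Lipschitz bound and invoking Lemma~\ref{lem:NormBnds} gives $\|T_1\sigma(\psi)\|_1 \leq \sqrt{\alpha}\,Q_1(\psi)\leq 8\sqrt{\alpha}\|\psi\|_1<\infty$. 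The property $T_1\sigma(0)=0$ is immediate, since $\psi^*(u;0,\sigma)\equiv 0$ and $\tilde{F}(0,0,\alpha)=0$, so the integrand in (\ref{T1Mapping}) vanishes identically; global existence of $\psi^*$ on $(-\infty,0]$ and convergence of the improper integral are guaranteed by the global Lipschitz character of the phase equation and the decay of $e^{-a\lambda'(a)u}$ as $u\to-\infty$, using $a\lambda'(a)<0$ from Hypothesis~\ref{hyp:LambdaOmega}.

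The first substantive step is a uniform size estimate on $\tilde{F}$. I would split $\tilde{F}=F-a\lambda'(a)s$ into its coupling part (the $\alpha$-weighted nearest-neighbour sum) and its reaction part $r\mapsto r\lambda(r)$ evaluated at $\bar{r}_{i,j}(\alpha)+s_{i,j}$, minus $a\lambda'(a)s_{i,j}$. The coupling part carries an explicit factor $\alpha$ and a bounded summand, hence is $\mathcal{O}(\alpha)$ in $\ell^\infty$. For the reaction part the crucial cancellation is that $\frac{d}{dr}\bigl(r\lambda(r)\bigr)\big|_{r=a}=\lambda(a)+a\lambda'(a)=a\lambda'(a)$, which is exactly the linear term subtracted off in (\ref{FTilde}); combined with $\lambda(a)=0$ and $|\bar{r}_{i,j}(\alpha)-a|\leq C_r\alpha$ from (\ref{RotWaveLip}), a Taylor expansion of $r\lambda(r)$ about $r=a$ shows that on the range $|s_{i,j}|\leq\sqrt{\alpha}$ both the value and the Lipschitz modulus $\kappa(\alpha)$ of the reaction part are $o(1)$ as $\alpha\to 0$. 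Integrating the resulting bound $\|\tilde{F}(\sigma(\psi),\psi,\alpha)\|_\infty\leq \sqrt{\alpha}\,(C\sqrt{\alpha}+\kappa(\alpha))$ against $e^{-a\lambda'(a)u}$ over $(-\infty,0]$ yields $\|T_1\sigma(\psi)\|_\infty\leq \sqrt{\alpha}(C\sqrt{\alpha}+\kappa(\alpha))/(-a\lambda'(a))$, which is at most $\sqrt{\alpha}$ once $\alpha$ is small enough that $C\sqrt{\alpha}+\kappa(\alpha)\leq -a\lambda'(a)$.

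The Lipschitz bound is where the main difficulty lies. I would first establish a pointwise estimate for $\tilde{F}(\sigma(\psi_1),\psi_1,\alpha)-\tilde{F}(\sigma(\psi_2),\psi_2,\alpha)$ in the spirit of (\ref{PhaseSolnBnd}): the reaction part contributes $\kappa(\alpha)|\sigma_{i,j}(\psi_1)-\sigma_{i,j}(\psi_2)|$, while the coupling part contributes $\alpha$ times a sum over nearest neighbours of $|\sigma_{i',j'}(\psi_1)-\sigma_{i',j'}(\psi_2)|$ and $|(\psi_{1,i',j'}-\psi_{1,i,j})-(\psi_{2,i',j'}-\psi_{2,i,j})|$. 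Taking $\ell^p$ norms, using the Lipschitz constant $\sqrt{\alpha}$ of $\sigma\in\tilde{X}(\sqrt{\alpha},\sqrt{\alpha})$ together with Lemma~\ref{lem:AltGradient} to convert the phase-difference sum into $Q_p$, gives $\|\tilde{F}(\sigma(\psi_1),\psi_1,\alpha)-\tilde{F}(\sigma(\psi_2),\psi_2,\alpha)\|_p\leq\bigl(\kappa(\alpha)\sqrt{\alpha}+C'\alpha\bigr)Q_p(\psi_1-\psi_2)$. Inserting $\psi_1=\psi^*(u;\psi,\sigma)$, $\psi_2=\psi^*(u;\tilde{\psi},\sigma)$ and applying (\ref{PhaseBnd1}) of Lemma~\ref{lem:InvManPhase} gives $Q_p(\psi_1-\psi_2)\leq e^{\alpha C_1|u|}Q_p(\psi-\tilde{\psi})$. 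The key analytic point is that for $u\leq0$ the product $e^{-a\lambda'(a)u}e^{\alpha C_1|u|}=e^{(-a\lambda'(a)-\alpha C_1)u}$ remains integrable once $\alpha$ is small enough that $-a\lambda'(a)-\alpha C_1>0$, with integral $(-a\lambda'(a)-\alpha C_1)^{-1}$, so the overall Lipschitz coefficient becomes $\sqrt{\alpha}\cdot\bigl(\kappa(\alpha)+C'\sqrt{\alpha}\bigr)/(-a\lambda'(a)-\alpha C_1)$.

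The final delicate point, and the crux of the lemma, is that this coefficient must be no larger than $\sqrt{\alpha}$ rather than merely bounded: the self-map property of $T_1$ on $\tilde{X}(\sqrt{\alpha},\sqrt{\alpha})$ hinges on the prefactor $\bigl(\kappa(\alpha)+C'\sqrt{\alpha}\bigr)/(-a\lambda'(a)-\alpha C_1)$ tending to $0$ as $\alpha\to0$, because $\kappa(\alpha)\to0$ (reaction) and $\sqrt{\alpha}\to0$ (coupling) while the denominator tends to $-a\lambda'(a)>0$. Choosing $\alpha_{X,1}>0$ small enough that this prefactor is at most $1$, and simultaneously small enough for the sup-norm estimate and for $-a\lambda'(a)-\alpha C_1>0$, delivers $\|T_1\sigma(\psi)-T_1\sigma(\tilde{\psi})\|_p\leq\sqrt{\alpha}\,Q_p(\psi-\tilde{\psi})$ and completes all three requirements. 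I expect the bookkeeping in the pointwise $\tilde{F}$ estimate, namely correctly isolating the $\alpha$ factor in the coupling and the vanishing modulus $\kappa(\alpha)$ in the reaction and tracking the neighbour sums through Lemmas~\ref{lem:NormBnds} and \ref{lem:AltGradient}, to be the most error-prone part, whereas the Gronwall-type growth and the exponential integration are routine given Lemma~\ref{lem:InvManPhase}.
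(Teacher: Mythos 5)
Your proposal is correct and follows essentially the same route as the paper's proof: the same three-part verification ($T_1\sigma(0)=0$, the sup-norm bound, the Lipschitz bound), the same splitting of $\tilde{F}$ into the $\alpha$-weighted coupling part and the reaction part with the Taylor cancellation of $a\lambda'(a)s$ at $r=a$, the same use of (\ref{PhaseBnd1}) from Lemma~\ref{lem:InvManPhase} together with the integrability condition $-a\lambda'(a)-\alpha C_1>0$, and the same final choice of $\alpha_{X,1}$ as a minimum of smallness conditions. If anything, you are slightly more careful than the paper in spelling out why $T_1\sigma(\psi)\in\ell^1$ and why the improper integral converges, which the paper leaves implicit.
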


\begin{proof}
	We break this proof into three major components to show that for appropriately chosen $D,\Delta > 0$ and sufficiently small $\alpha > 0$ we can guarantee that $T_1\sigma(0) = 0$, $\|T_1\sigma\|_X \leq D$ and 
	\[
		\|T_1\sigma(\psi) - T_1\sigma(\tilde{\psi})\|_p \leq \Delta Q_p(\psi - \tilde{\psi})
	\] 
	for all $\psi,\tilde{\psi} \in \ell^1$, $\sigma \in \tilde{X}(\sqrt{\alpha},\sqrt{\alpha})$, and $p \in [1,\infty]$. Throughout this proof we will always assume $\alpha > 0$ is taken to satisfy: $\alpha \leq \min\{\alpha^*,(\frac{a}{4})^2,1\}$, so that our choices $D = \sqrt{\alpha}$ and $\Delta = \sqrt{\alpha}$ satisfy the assumptions of Lemma~\ref{lem:InvManPhase}.
	
	\underline{$T_1\sigma(0) = 0$}: Recall that taking $\psi = 0$ results in $\psi^*(t;0,\sigma) = 0$ for all $\sigma$ since $\sigma(0) = 0$. Then, evaluating $T_1\sigma(0)$ gives 
	\[
		T_1\sigma(0) = \int_{-\infty}^0e^{-a\lambda'(a)u}\tilde{F}(0,0,\alpha)du = 0,	
	\]
	since $\tilde{F}(0,0,\alpha) = 0$ for all $\alpha \in [0,\alpha^*]$. 
	
	\underline{$\|T_1\sigma\|_X \leq D$}: To begin, we consider arbitrary $\psi \in \ell^1$ and recall that 
	\begin{equation}\label{WellDefined1}
		\begin{split}
		&\tilde{\sigma}_{i,j}(\sigma(\psi^*(t;\psi,\sigma)),\psi^*(t;\psi,\sigma),\alpha)\\
		&= \alpha\sum_{i',j'} [(\bar{r}_{i',j'}(\alpha) + \sigma_{i',j'}(\psi^*(t;\psi,\sigma)))\cos(\bar{\theta}_{i',j'}(\alpha) + \psi_{i',j'}^*(t;\psi,\sigma) - \bar{\theta}_{i,j}(\alpha)- \psi_{i,j}^*(t;\psi,\sigma))\\ 
		&- (\bar{r}_{i,j}(\alpha)+ \sigma_{i,j}(\psi^*(t;\psi,\sigma))\bigg]+ (\bar{r}_{i,j}(\alpha) + \sigma_{i,j}(\psi^*(t;\psi,\sigma))\lambda(\bar{r}_{i,j}(\alpha) + \sigma_{i,j}(\psi^*(t;\psi,\sigma))-a\lambda'(a)\sigma_{i,j}(\psi^*(t;\psi,\sigma)).
		\end{split}
	\end{equation}
	For convenience, we will break down the bounds of this term into separate parts. First, we have
	\[
		|(\bar{r}_{i',j'}(\alpha) + \sigma_{i',j'}(\psi^*(t;\psi,\sigma)))\cos(\bar{\theta}_{i',j'}(\alpha) + \psi_{i',j'}^*(t;\psi,\sigma) - \bar{\theta}_{i,j}(\alpha)- \psi_{i,j}^*(t;\psi,\sigma))| \leq \frac{3a}{2} + \sqrt{\alpha},	
	\]
	since our definition of $\alpha^* > 0$ implies that $|\bar{r}_{i',j'}(\alpha)| \leq \frac{3a}{2}$ and $| \sigma_{i',j'}(\psi^*(u;\psi,\sigma)| \leq \sqrt{\alpha}$ since $\sigma \in \tilde{X}(\sqrt{\alpha},\sqrt{\alpha})$. Similarly, 
	\[
		|\bar{r}_{i,j}(\alpha)+ \sigma_{i,j}(\psi^*(t;\psi,\sigma))| \leq \frac{3a}{2} + \sqrt{\alpha}. 	
	\] 
	Then, we use the fact that $\lambda(a) = 0$ to apply Taylor's Theorem to see that there exists a constant $C_\lambda > 0$, independent of $D =  \sqrt{\alpha} \leq \frac{a}{4}$, so that 
	\begin{equation}\label{C_lambda}
		\begin{split}
		|(\bar{r}_{i,j}(\alpha) + \sigma_{i,j}(\psi^*(t;\psi,\sigma)))\lambda(\bar{r}_{i,j}(\alpha) &+ \sigma_{i,j}(\psi^*(t;\psi,\sigma)))-a\lambda'(a)\sigma_{i,j}(\psi^*(t;\psi,\sigma))| \\
		&\leq C_\lambda (|\sigma_{i,j}(\psi^*(t;\psi,\sigma)|^2 + |\bar{r}_{i,j}(\alpha) - a|) \\
		&\leq C_\lambda \alpha + C_\lambda C_r\alpha \\
		&= (1 + C_r)C_\lambda\alpha  
		\end{split}
	\end{equation}
	where $C_r > 0$ is the constant guaranteed by (\ref{RotWaveLip}). Then, using these inequalities we therefore return to (\ref{WellDefined1}) to see that
	\[
		\begin{split}
		|\tilde{F}_{i,j}(\sigma(\psi^*(t;\psi,\sigma)),\psi^*(t;\psi,\sigma),\alpha) | &\leq 4\alpha\bigg(\frac{3a}{2} + \sqrt{\alpha}\bigg) + 4\alpha\bigg(\frac{3a}{2} + \sqrt{\alpha}\bigg) +  (1 + C_r)C_\lambda\alpha \\ 
		&= 8\alpha\bigg(\frac{3a}{2} + \sqrt{\alpha}\bigg) + (1 + C_r)C_\lambda\alpha \\
		&\leq 8\alpha\bigg(\frac{3a}{2} + \frac{a}{4}\bigg) + (1 + C_r)C_\lambda\alpha \\ 
		&\leq (14a + C_\lambda + C_rC_\lambda)\alpha,	
		\end{split}
	\] 
	for all $(i,j) \in \mathbb{Z}^2$, since we have assumed $\sqrt{\alpha} \leq \frac{a}{4}$. Therefore, recalling that $a\lambda'(a) < 0$, this then implies that for any $\psi\in\ell^1$ we have
	\[
	\begin{split}
		\|T_1\sigma(\psi)\|_\infty &\leq (14a + C_\lambda + C_rC_\lambda)\alpha \int_{-\infty}^0e^{-a\lambda'(a)u}du \\
		&\leq \frac{-1}{a\lambda'(a)}\bigg[14a + C_\lambda + C_rC_\lambda\bigg]\alpha.  
	\end{split}
	\]
	Taking
	\[
		\alpha \leq \min\bigg\{\alpha^*,\bigg(\frac{a}{4}\bigg)^2,1,\bigg(\frac{a\lambda'(a)}{14a + C_\lambda + C_rC_\lambda}\bigg)^2\bigg\},
	\]
	gives that $\|T_1\sigma(\psi)\|_\infty \leq \sqrt{\alpha}$, for all $\psi \in \ell^1$. Then taking the supremum over all $\psi\in\ell^1$ we have $\|T_1\sigma\|_X \leq \sqrt{\alpha}$, as required. 
	
	\underline{$\|T_1\sigma(\psi) - T_1\sigma(\tilde{\psi})\|_p \leq \Delta Q_p(\psi - \tilde{\psi})$}: This proof proceeds in a similar manner to the previous bound. Begin by fixing $\sigma \in \tilde{X}(\sqrt{\alpha},\sqrt{\alpha})$, $\psi,\tilde{\psi}\in\ell^1$. We again use the form (\ref{WellDefined1}) and break the bounds into smaller pieces as in the proof of the previous bound. 
	
	To begin, we use the uniform boundedness of cosine and its derivatives to obtain  
	\[
	\begin{split}
		|&(\bar{r}_{i',j'}(\alpha) + \sigma_{i',j'}(\psi^*(t;\psi,\sigma)))\cos(\bar{\theta}_{i',j'}(\alpha) + \psi_{i',j'}^*(t;\psi,\sigma) - \bar{\theta}_{i,j}(\alpha)- \psi_{i,j}^*(t;\psi,\sigma)) \\
		&-(\bar{r}_{i',j'}(\alpha) + \sigma_{i',j'}(\psi^*(t;\tilde{\psi},\sigma)))\cos(\bar{\theta}_{i',j'}(\alpha) + \psi_{i',j'}^*(t;\tilde{\psi},\sigma) - \bar{\theta}_{i,j}(\alpha)- \psi_{i,j}^*(t;\tilde{\psi},\sigma))| \\
		&\leq |\sigma_{i',j'}(\psi^*(t;\psi,\sigma)) - \sigma_{i',j'}(\psi^*(t;\tilde{\psi},\sigma))| + |(\psi_{i',j'}^*(t;\psi,\sigma) - \psi_{i,j}^*(t;\psi,\sigma)) - (\psi_{i',j'}^*(t;\tilde{\psi},\sigma) - \psi_{i,j}^*(t;\tilde{\psi},\sigma))|.	
	\end{split}
	\]
	Then, trivially we have
	\[
		|(\bar{r}_{i,j}(\alpha)+ \sigma_{i,j}(\psi^*(t;\psi,\sigma))) - (\bar{r}_{i,j}(\alpha)+ \sigma_{i,j}(\psi^*(t;\tilde{\psi},\sigma)))| = |\sigma_{i,j}(\psi^*(t;\psi,\sigma)) - \sigma_{i,j}(\psi^*(t;\tilde{\psi},\sigma))|,	
	\]
	which we point out for the sake of completeness. And finally, for all $\sigma \in \tilde{X}(\sqrt{\alpha},\sqrt{\alpha})$, there exists a $C'_\lambda>0$, independent of $0 \leq \alpha \leq \frac{a}{4}$, so that 
	\[
		\begin{split}
		|(\bar{r}_{i,j}(\alpha) &+ \sigma_{i,j}(\psi^*(t;\psi,\sigma)))\lambda(\bar{r}_{i,j}(\alpha)+\sigma_{i,j}(\psi^*(t;\psi,\sigma))) - (\bar{r}_{i,j}(\alpha) + \sigma_{i,j}(\psi^*(t;\tilde{\psi},\sigma)))\lambda(\bar{r}_{i,j}(\alpha) + \sigma_{i,j}(\psi^*(t;\tilde{\psi},\sigma)))\\ 
		&-a\lambda'(a)(\sigma_{i,j}(\psi^*(t;\psi,\sigma))-\sigma_{i,j}(\psi^*(t;\tilde{\psi},\sigma)))| \\
		&\leq \sup_{|x| \leq \sqrt{\alpha}} |\lambda(\bar{r}_{i,j}(\alpha)+x)+ (\bar{r}_{i,j}(\alpha) + x)\lambda'(\bar{r}_{i,j}(\alpha)+x) - a\lambda'(a)| |\sigma_{i,j}(\psi^*(t;\psi,\sigma))-\sigma_{i,j}(\psi^*(t;\tilde{\psi},\sigma))| \\
		&\leq C'_\lambda \alpha|\sigma_{i,j}(\psi^*(t;\psi,\sigma))-\sigma_{i,j}(\psi^*(t;\tilde{\psi},\sigma))|, 
		\end{split}
	\]
	since the function $\lambda(\bar{r}_{i,j}(\alpha)+x)+ (\bar{r}_{i,j}(\alpha) + x)\lambda'(\bar{r}_{i,j}(\alpha)+x) - a\lambda'(a)$ vanishes when $(x,\alpha) = (0,0)$ because $\lambda(a) = 0$ by assumption and $r_{i,j}(0) = a$ for all $(i,j) \in \mathbb{Z}^2$.
	
	Now, we use these three previous bounds and the form (\ref{WellDefined1}) to see that 
	\begin{equation}\label{WellDefined2}
		\begin{split}
			|\tilde{F}_{i,j}(\sigma(\psi^*(t;\psi,\sigma),&\psi^*(t;\psi,\sigma),\alpha)-\tilde{F}_{i,j}(\sigma(\psi^*(t;\tilde{\psi},\sigma),\psi^*(t;\tilde{\psi},\sigma),\alpha)| \\
			&\leq \alpha\sum_{i',j'} |\sigma_{i',j'}(\psi^*(t;\psi,\sigma)- \sigma_{i',j'}(\psi^*(t;\tilde{\psi},\sigma)| + 4\alpha|\sigma_{i,j}(\psi^*(t;\psi,\sigma)) - \sigma_{i,j}(\psi^*(t;\tilde{\psi},\sigma))| \\ 
			&+ \alpha\sum_{i',j'} |(\psi^*_{i',j'}(t;\psi,\sigma) - \psi^*_{i,j}(t;\psi,\sigma)) - (\psi^*_{i',j'}(t;\tilde{\psi},\sigma) - \psi^*_{i,j}(t;\tilde{\psi},\sigma))| \\ 
			&+ C'_\lambda \alpha|\sigma_{i,j}(\psi^*(t;\psi,\sigma))-\sigma_{i,j}(\psi^*(t;\tilde{\psi},\sigma))|, 
		\end{split}
	\end{equation}
	for all $(i,j) \in \mathbb{Z}^2$. Then, for all $p \in [1,\infty)$, using Lemma~\ref{lem:AltGradient}, we obtain
	\[
		\begin{split}
			\|\tilde{F}(\sigma(\psi^*(t;\psi,\sigma),\psi^*(t;\psi,\sigma),\alpha) &-\tilde{F}(\sigma(\psi^*(t;\tilde{\psi},\sigma),\psi^*(t;\tilde{\psi},\sigma),\alpha)\|_p \\
			&\leq 8\alpha\|\sigma(\psi^*(t;\psi,\sigma)) - \sigma(\psi^*(t;\tilde{\psi},\sigma))\|_p + 4\alpha Q_p(\psi^*(t;\psi,\sigma) - \psi^*(t;\tilde{\psi},\sigma)) \\ 
			&+ C'_\lambda \alpha\|\sigma(\psi^*(t;\psi,\sigma)) - \sigma(\psi^*(t;\tilde{\psi},\sigma))\|_p \\
			&\leq  (8\alpha^\frac{3}{2} + 4\alpha + C'_\lambda \alpha^\frac{3}{2})Q_p(\psi^*(t;\psi,\sigma) - \psi^*(t;\tilde{\psi},\sigma)) \\
			&\leq  (13 + C'_\lambda)\alpha e^{\alpha C_1 |t|}Q_p(\psi - \tilde{\psi}),		
		\end{split}
	\]
	where we have used the facts that $\sigma \in \tilde{X}(\sqrt{\alpha},\sqrt{\alpha})$ and $\sqrt{\alpha} \leq 1$, as well as applied (\ref{PhaseBnd1}) from Lemma~\ref{lem:InvManPhase} with the constant $C_1 > 0$. Taking $\alpha \leq \frac{-a\lambda'(a)}{2C_1}$ guarantees that 
	\[
		\alpha C_1 + a\lambda'(a) \leq \frac{a\lambda'(a)}{2} < 0, 	
	\]
	and hence,
	\[
		\begin{split}
		\|T_1\sigma(\psi) - T_1\sigma(\tilde{\psi})\|_p &\leq (9 + C'_\lambda)\alpha Q_p(\psi - \tilde{\psi}) \int_{-\infty}^0 e^{-(\alpha C_1 + a\lambda'(a)) u} du \\ 
		&= \bigg(\frac{9 + C'_\lambda}{-(\alpha C_1+a\lambda'(a))}\bigg)\alpha Q_p(\psi - \tilde{\psi}) \\
		&\leq -\bigg(\frac{18 + 2C'_\lambda}{a\lambda'(a)}\bigg)\alpha Q_p(\psi - \tilde{\psi}).  
		\end{split}
	\]
	Therefore, taking 
	\[
		\alpha \leq \min\bigg\{\alpha^*,\bigg(\frac{a}{4}\bigg)^2,1,\frac{-a\lambda'(a)}{2C_1},\bigg(\frac{a\lambda'(a)}{18 + 2C'_\lambda}\bigg)^2\bigg\},
	\]
	provides that $\|T_1\sigma(\psi) - T_1\sigma(\tilde{\psi})\|_p \leq \sqrt{\alpha}Q_p(\psi - \tilde{\psi})$ for all $\psi,\tilde{\psi} \in \ell^1$ and $p \in [1,\infty)$. The case when $p = \infty$ follows in exactly the same way, and is committed. 
	
	In closing, we can define $\alpha_{X,1}$ as
	\[
		\alpha_{X,1}:= \min\bigg\{\alpha^*,\bigg(\frac{a}{4}\bigg)^2,1,\bigg(\frac{a\lambda'(a)}{14a + C_\lambda + C_rC_\lambda}\bigg)^2,\frac{-a\lambda'(a)}{2C_1},\bigg(\frac{a\lambda'(a)}{18 + 2C'_\lambda}\bigg)^2\bigg\} 
	\]
	so that for all $\alpha \in [0,\alpha_{X,1}]$ we have that $T_1:\tilde{X}(\sqrt{\alpha},\sqrt{\alpha}) \to \tilde{X}(\sqrt{\alpha},\sqrt{\alpha})$. This concludes the proof.
\end{proof} 

\begin{lem} \label{lem:T1Contraction} 
	There exists $\alpha_{X,2} > 0$ such that for each $\alpha \in [0,\alpha_{X,2}]$, $T_1:\tilde{X}(\sqrt{\alpha},\sqrt{\alpha}) \to \tilde{X}(\sqrt{\alpha},\sqrt{\alpha})$ is a contraction with contraction constant at most $\frac{1}{2}$.
\end{lem}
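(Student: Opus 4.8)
The plan is to show that $T_1$ is a contraction on $\tilde{X}(\sqrt{\alpha},\sqrt{\alpha})$ with respect to $\|\cdot\|_X$, that is, that for sufficiently small $\alpha$ and any $\sigma,\tilde{\sigma}\in\tilde{X}(\sqrt{\alpha},\sqrt{\alpha})$ one has $\|T_1\sigma - T_1\tilde{\sigma}\|_X \le \tfrac{1}{2}\|\sigma-\tilde{\sigma}\|_X$. Fix $\psi\in\ell^1$; from the $\tilde{F}$ form of (\ref{T1Mapping}),
\[
	T_1\sigma(\psi) - T_1\tilde{\sigma}(\psi) = \int_{-\infty}^0 e^{-a\lambda'(a)u}\big[\tilde{F}(\sigma(\psi^*(u;\psi,\sigma)),\psi^*(u;\psi,\sigma),\alpha) - \tilde{F}(\tilde{\sigma}(\psi^*(u;\psi,\tilde{\sigma})),\psi^*(u;\psi,\tilde{\sigma}),\alpha)\big]\,\mathrm{d}u,
\]
so the whole argument reduces to bounding the integrand in $\|\cdot\|_\infty$ uniformly in $\psi$ and then integrating.

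First I would estimate the integrand pointwise in $(i,j)$, reusing the Lipschitz decomposition of $\tilde{F}$ established in the proof of Lemma~\ref{lem:T1WellDefined} (cf.\ (\ref{WellDefined2})). The new feature relative to that lemma is that $\sigma$ now enters twice: once as the outer function and once through the flow $\psi^*(u;\psi,\sigma)$, which itself depends on the chosen graph function. I would therefore split each radial discrepancy as
\[
	\|\sigma(\psi^*(u;\psi,\sigma)) - \tilde{\sigma}(\psi^*(u;\psi,\tilde{\sigma}))\|_\infty \le \|\sigma(\psi^*(u;\psi,\sigma)) - \sigma(\psi^*(u;\psi,\tilde{\sigma}))\|_\infty + \|\sigma(\psi^*(u;\psi,\tilde{\sigma})) - \tilde{\sigma}(\psi^*(u;\psi,\tilde{\sigma}))\|_\infty.
\]
The second term is at most $\|\sigma - \tilde{\sigma}\|_X$ by definition of the $X$-norm. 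For the first term I use the Lipschitz property of $\sigma\in\tilde{X}(\sqrt{\alpha},\sqrt{\alpha})$ at $p=\infty$, then $Q_\infty \le 8\|\cdot\|_\infty$ from Lemma~\ref{lem:NormBnds}, and finally the flow estimate (\ref{PhaseBnd2}) of Lemma~\ref{lem:InvManPhase}, giving
\[
	\|\sigma(\psi^*(u;\psi,\sigma)) - \sigma(\psi^*(u;\psi,\tilde{\sigma}))\|_\infty \le \sqrt{\alpha}\,Q_\infty(\psi^*(u;\psi,\sigma) - \psi^*(u;\psi,\tilde{\sigma})) \le 8\sqrt{\alpha}\,e^{\alpha C_1|u|}\|\sigma - \tilde{\sigma}\|_X.
\]
The phase-difference terms appearing inside $\tilde{F}$ are handled identically, being bounded by $2\|\psi^*(u;\psi,\sigma) - \psi^*(u;\psi,\tilde{\sigma})\|_\infty \le 2e^{\alpha C_1|u|}\|\sigma-\tilde{\sigma}\|_X$ via (\ref{PhaseBnd2}), while the nonlinear $\lambda$-contribution retains the prefactor $C'_\lambda\alpha$ exactly as in Lemma~\ref{lem:T1WellDefined}. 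Since every surviving term in $\tilde{F}$ carries at least one factor of $\alpha$ (the coupling terms explicitly, and the $\lambda$-term through $C'_\lambda\alpha$ together with $\bar{r}_{i,j}(\alpha)-a = \mathcal{O}(\alpha)$), I obtain a constant $C>0$, uniform in the relevant parameters, with
\[
	\big\|\tilde{F}(\sigma(\psi^*(u;\psi,\sigma)),\psi^*(u;\psi,\sigma),\alpha) - \tilde{F}(\tilde{\sigma}(\psi^*(u;\psi,\tilde{\sigma})),\psi^*(u;\psi,\tilde{\sigma}),\alpha)\big\|_\infty \le C\alpha\, e^{\alpha C_1|u|}\|\sigma - \tilde{\sigma}\|_X.
\]

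Finally I would integrate. Writing $A := -a\lambda'(a) > 0$, on $(-\infty,0]$ the kernel is $e^{Au}e^{-\alpha C_1 u} = e^{(A - \alpha C_1)u}$, which is integrable once $\alpha C_1 < A$, with $\int_{-\infty}^0 e^{(A-\alpha C_1)u}\,\mathrm{d}u = (A - \alpha C_1)^{-1}$. Hence
\[
	\|T_1\sigma(\psi) - T_1\tilde{\sigma}(\psi)\|_\infty \le \frac{C\alpha}{-a\lambda'(a) - \alpha C_1}\,\|\sigma - \tilde{\sigma}\|_X,
\]
and requiring $\alpha C_1 \le \tfrac{1}{2}(-a\lambda'(a))$ makes the denominator at least $\tfrac{1}{2}(-a\lambda'(a))$, so the prefactor is at most $2C\alpha/(-a\lambda'(a))$. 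Taking the supremum over $\psi\in\ell^1$ and setting
\[
	\alpha_{X,2} := \min\bigg\{\alpha_{X,1},\ \frac{-a\lambda'(a)}{2C_1},\ \frac{-a\lambda'(a)}{4C}\bigg\}
\]
forces the contraction constant below $\tfrac{1}{2}$ for all $\alpha\in[0,\alpha_{X,2}]$, where the inclusion of $\alpha_{X,1}$ guarantees via Lemma~\ref{lem:T1WellDefined} that $T_1$ genuinely maps $\tilde{X}(\sqrt{\alpha},\sqrt{\alpha})$ into itself. The main obstacle is the bookkeeping in the pointwise estimate of the integrand: unlike Lemma~\ref{lem:T1WellDefined}, the two evaluation points of $\tilde{F}$ differ both in the outer graph function and in its argument, so one must carefully separate the ``$\sigma$ versus $\tilde{\sigma}$'' discrepancy (controlled by $\|\sigma-\tilde{\sigma}\|_X$) from the ``different flows'' discrepancy (controlled through (\ref{PhaseBnd2})), and confirm that each resulting term retains a factor of $\alpha$ so that small $\alpha$ delivers the contraction.
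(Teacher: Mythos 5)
Your proposal is correct and follows essentially the same route as the paper's proof: a pointwise Lipschitz estimate on the $\tilde{F}$ integrand (mirroring the bound used for well-definedness), the flow estimate (\ref{PhaseBnd2}) of Lemma~\ref{lem:InvManPhase} to control $\|\psi^*(u;\psi,\sigma)-\psi^*(u;\psi,\tilde{\sigma})\|_\infty$, integration of the kernel $e^{-(a\lambda'(a)+\alpha C_1)u}$ over $(-\infty,0]$, and a smallness condition on $\alpha$ to force the contraction constant below $\tfrac{1}{2}$. If anything, your triangle-inequality splitting of $\|\sigma(\psi^*(u;\psi,\sigma))-\tilde{\sigma}(\psi^*(u;\psi,\tilde{\sigma}))\|_\infty$ into a same-argument piece (bounded by $\|\sigma-\tilde{\sigma}\|_X$) and a same-function piece (bounded via the Lipschitz property of $\sigma$ and (\ref{PhaseBnd2})) is more careful than the paper, which bounds that quantity by $\|\sigma-\tilde{\sigma}\|_X$ directly without accounting for the two different flow arguments.
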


\begin{proof}
	This proof proceeds by applying very similar manipulations to that of the previous lemma to show that $T_1$ is well-defined, and therefore we will omit some details which are redundant. Furthermore, we will always consider $\alpha \leq \alpha_{X,1}$, so that the conclusion of Lemma~\ref{lem:T1WellDefined} holds and that our choices $D = \sqrt{\alpha}$ and $\Delta = \sqrt{\alpha}$ satisfy the assumptions of Lemma~\ref{lem:InvManPhase}.
	
	To begin, let us consider $\sigma,\tilde{\sigma} \in \tilde{X}(\sqrt{\alpha},\sqrt{\alpha})$. Then, following the manipulations in (\ref{WellDefined2}) we obtain the similar bound 
 	\begin{equation} \label{TContractBnd1}
		\begin{split}
			|\tilde{F}_{i,j}(\sigma(\psi^*(t;\psi,\sigma),&\psi^*(t;\psi,\sigma),\alpha)-\tilde{F}_{i,j}(\tilde{\sigma}(\psi^*(t;\psi,\tilde{\sigma}),\psi^*(t;\psi,\tilde{\sigma}),\alpha)| \\
			&\leq \alpha\sum_{i',j'} |\sigma_{i',j'}(\psi^*(t;\psi,\sigma))- \tilde{\sigma}_{i',j'}(\psi^*(t;\psi,\tilde{\sigma}))| + 4\alpha|\sigma_{i,j}(\psi^*(t;\psi,\sigma)) - \tilde{\sigma}_{i,j}(\psi^*(t;\psi,\tilde{\sigma}))| \\ 
			&+ \alpha\sum_{i',j'} |(\psi^*_{i',j'}(t;\psi,\sigma) - \psi^*_{i,j}(t;\psi,\sigma)) - (\psi^*_{i',j'}(t;\psi,\tilde{\sigma}) - \psi^*_{i,j}(t;\psi,\tilde{\sigma}))| \\ 
			&+ C'_\lambda \alpha|\sigma_{i,j}(\psi^*(t;\psi,))-\tilde{\sigma}_{i,j}(\psi^*(t;\psi,\tilde{\sigma}))|, 
		\end{split}
	\end{equation}
	for all $(i,j)\in\mathbb{Z}^2$, and we recall that $C'_\lambda > 0$ is the constant used in the proof of Lemma~\ref{lem:T1WellDefined} for which
	\[
		|\lambda(\bar{r}_{i,j}(\alpha)+x)+ (\bar{r}_{i,j}(\alpha) + x)\lambda'(\bar{r}_{i,j}(\alpha)+x) - a\lambda'(a)| \leq C'_\lambda \alpha
	\] 
	for all $|x| \leq \sqrt{\alpha}$. Then, using (\ref{TContractBnd1}) we can take the supremum over all $(i,j) \in\mathbb{Z}^2$ to get
	\[
		\begin{split}
			\|&\tilde{F}(\sigma(\psi^*(t;\psi,\sigma)),\psi^*(t;\psi,\sigma),\alpha)-\tilde{F}(\tilde{\sigma}(\psi^*(t;\psi,\tilde{\sigma})),\psi^*(t;\psi,\tilde{\sigma}),\alpha)\|_\infty \\
			&\leq (8 + C'_\lambda)\alpha \|\sigma(\psi^*(t;\psi,\sigma)) -\tilde{\sigma}(\psi^*(t;\psi,\tilde{\sigma}))\|_\infty +4\alpha \|\psi^*(t;\psi,\sigma) - \psi^*(t;\psi,\tilde{\sigma})\|_\infty \\
			&\leq (8 + C'_\lambda)\alpha\|\sigma - \tilde{\sigma}\|_X +4\alpha e^{\alpha C_1|t|}\|\sigma - \tilde{\sigma}\|_X \\
			&\leq (12 + C'_\lambda)\alpha e^{\alpha C_1|t|}\|\sigma - \tilde{\sigma}\|_X. 
		\end{split}	
	\]
	Therefore, taking $\alpha \leq \frac{-a\lambda'(a)}{2C_1}$ guarantees that 
	\[
		\alpha C_1 + a\lambda'(a) \leq \frac{a\lambda'(a)}{2} < 0, 	
	\]
	and hence,
	\[
		\begin{split}
		\|T_1\sigma(\psi) - T\tilde{\sigma}(\psi)\|_\infty &\leq (12 + C'_\lambda)\alpha\|\sigma - \tilde{\sigma}\|_X\int_{-\infty}^0 e^{-(\alpha C_1 + a\lambda'(a)) u} du \\
		&\leq -\bigg(\frac{12 + C'_\lambda}{\alpha C_1 + a\lambda'(a)}\bigg)\alpha\|\sigma - \tilde{\sigma}\|_X \\
		&\leq -\bigg(\frac{24 + 2C'_\lambda}{a\lambda'(a)}\bigg)\alpha\|\sigma - \tilde{\sigma}\|_X.
		\end{split}
	\]
	Then, taking 
	\[
		\alpha_{X,2} := \min\bigg\{\alpha^*,\alpha_{X,1},\frac{-a\lambda'(a)}{2C_1},\frac{-a\lambda'(a)}{48 + 4C'_\lambda}\bigg\}
	\]
	gives $\|T_1\sigma(\psi) - T_1\tilde{\sigma}(\psi)\|_\infty \leq \frac{1}{2}\|\sigma-\tilde{\sigma}\|_X$ for all $\psi\in\ell^1$. Taking the supremum over $\psi\in\ell^1$ shows that $T_1$ is a contraction with contraction constant at most $\frac{1}{2}$, concluding the proof. 
\end{proof} 

Lemma~\ref{lem:T1Contraction} gives that for each $\alpha \in [0,\alpha_{X,2}]$ there exists a unique fixed point of the mapping $T_1$, simply denoted $\sigma(\psi,\alpha) \in \tilde{X}(\sqrt{\alpha},\sqrt{\alpha})$. Recall that the properties of $\tilde{X}(\sqrt{\alpha},\sqrt{\alpha})$ therefore imply that 
\[
	\begin{split}
		\sigma(0,\alpha) &= 0, \\
		\|\sigma(\psi,\alpha)\|_\infty &\leq \sqrt{\alpha}, \\
		\|\sigma(\psi,\alpha) - \sigma(\tilde{\psi},\alpha)\|_p &\leq \sqrt{\alpha}Q_p(\psi - \tilde{\psi}), 	
	\end{split}
\]
for all $\psi,\tilde{\psi}\in\ell^1$ and $p \in [1,\infty]$. As previously stated, this fixed point corresponds to an invariant manifold of the differential equation (\ref{FullODE}). We close this section by discussing the final point in Theorem~\ref{thm:InvMan} relating to the situation when $\omega_1$ is identically zero.

\subsection{Stability of the Invariant Manifold}\label{subsec:ManStability} 

This subsection proceeds in a similar way to the previous subsection in that we apply a bootstrapping argument to an appropriate mapping to determine the stability of the invariant manifold. In fact, much of this section follows the proof of the Stable Manifold Theorem, but since we are working in infinite dimensions some extra attention must be paid to certain aspects of the problem. Recall that in the previous section we determined the existence of an invariant manifold for the system (\ref{FullODE}), which we write as a function of the phase variable: $\sigma(\psi,\alpha)$, for $\alpha \geq 0$ sufficiently small. 

Now, to understand the decay of perturbations from the invariant manifold, we write $s = \sigma(\psi,\alpha) + \rho$, where $\rho$ captures the deviation of $s$ from the invariant manifold. For a fixed $\delta > 0$, let us consider the spaces 
\[
	Y(\delta) = \{\rho:[0,\infty)\to \ell^1:\ \sup_{t \geq 0} \|\rho(t)\|_1 \leq 2\delta e^{\frac{a\lambda'(a)}{2}t}\}
\]  
along with associated norm
\begin{equation}\label{YNorm}
	\|\rho(t)\|_Y := \sup_{t \in[0,\infty)} \|\rho(t)\|_1. 
\end{equation}
Here we recall that $a\lambda'(a) < 0$, and hence the $\ell^1$-norm of elements in $Y$ decay exponentially in $t$. We present the following lemma.

\begin{lem}\label{lem:Y_Complete} 
	For every $\delta > 0$, $Y(\delta)$ is a complete with respect to the norm $\|\cdot\|_Y$.
\end{lem}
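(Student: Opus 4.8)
The plan is to mirror the structure of the proof of Lemma~\ref{lem:X_Complete}: take an arbitrary Cauchy sequence in $Y(\delta)$, extract a pointwise limit using completeness of $\ell^1$, and then check that the limit inherits the defining exponential bound. First I would record the elementary observation that every $\rho \in Y(\delta)$ is bounded: since $a\lambda'(a) < 0$, the weight satisfies $e^{\frac{a\lambda'(a)}{2}t} \leq 1$ for all $t \geq 0$, so $\|\rho(t)\|_1 \leq 2\delta e^{\frac{a\lambda'(a)}{2}t} \leq 2\delta$ and hence $\|\rho\|_Y \leq 2\delta < \infty$. Thus $\|\cdot\|_Y$ is a genuine norm on $Y(\delta)$, and $Y(\delta)$ sits inside the space of bounded functions $[0,\infty)\to\ell^1$.

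Next I would take a Cauchy sequence $\{\rho_n\}_{n=1}^\infty \subset Y(\delta)$ with respect to $\|\cdot\|_Y$. For each fixed $t \geq 0$ the estimate $\|\rho_n(t) - \rho_m(t)\|_1 \leq \|\rho_n - \rho_m\|_Y$ shows that $\{\rho_n(t)\}_n$ is Cauchy in $\ell^1$, so by completeness of $\ell^1$ (established in Section~\ref{sec:SpatialSettings}) it converges to some $\rho(t) \in \ell^1$. This defines a candidate limit $\rho:[0,\infty)\to\ell^1$, and because the $\rho_n$ are Cauchy uniformly in $t$ one obtains $\|\rho_n - \rho\|_Y \to 0$ in the usual way. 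It then remains only to verify $\rho \in Y(\delta)$, i.e. that the pointwise bound $\|\rho(t)\|_1 \leq 2\delta e^{\frac{a\lambda'(a)}{2}t}$ persists. Fixing $t$ and using the reverse triangle inequality gives $\big|\|\rho_n(t)\|_1 - \|\rho(t)\|_1\big| \leq \|\rho_n(t) - \rho(t)\|_1 \to 0$, so $\|\rho_n(t)\|_1 \to \|\rho(t)\|_1$; since each $\|\rho_n(t)\|_1$ respects the bound and the inequality is non-strict, the limit does too. As $t$ is arbitrary, $\rho \in Y(\delta)$.

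The argument is essentially routine, and the only step I would flag as requiring any attention — the closest thing to a main obstacle — is the passage of the exponentially weighted constraint to the limit. Because that constraint is imposed pointwise in $t$ and is a non-strict inequality, it defines a closed condition, so the convergence $\|\rho_n(t)\|_1 \to \|\rho(t)\|_1$ at each fixed $t$ preserves it automatically. I note that no continuity is built into the definition of $Y(\delta)$, so continuity of the limit need not be tracked separately; should it be wanted for the subsequent fixed-point argument, it would follow at once since a uniform limit of continuous functions is continuous.
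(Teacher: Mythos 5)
Your proof is correct and follows essentially the same route as the paper: extract a pointwise limit in $\ell^1$ from the uniform Cauchy property and then observe that the non-strict, pointwise-in-$t$ exponential bound is preserved in the limit (the paper verifies this last step by a small contradiction argument borrowed from Lemma~\ref{lem:X_Complete}, whereas you use the equivalent and slightly cleaner observation that $\|\rho_n(t)\|_1 \to \|\rho(t)\|_1$ via the reverse triangle inequality). No gaps.
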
 

\begin{proof}
	Let us begin by fixing $\delta > 0$. Let us take $\{\rho_n\}_{n=1}^\infty \subset Y(\delta)$ to be a Cauchy sequence. Then, by definition we have 
	\[
		\|\rho_n(t)\|_1 \leq \|\rho_n\|_Y \leq 2\delta e^{\frac{a\lambda'(a)}{2}t} \leq 2\delta,
	\]
	for all $t\in [0,\infty)$ and $n \geq 1$. Then, as in the proof of Lemma~\ref{lem:X_Complete}, uniformity of the norm in $t \geq 0$ implies the existence of a pointwise limit, denoted $\rho(t)$, so that $\rho_n(t) \to \rho(t)$ in $\ell^1$ for all $t \in [0,\infty)$. Furthermore, the proof of the decay of the $\ell^1$-norm of $\rho(t)$ in $t$ to ensure $\rho \in Y(\delta)$ follows through nearly identical arguments to those laid out in Lemma~\ref{lem:X_Complete}.
\end{proof} 

Throughout this subsection, as in the former, we will use the constant $\alpha_{X,2} > 0$ to represent the maximal value of $\alpha$ for which the invariant manifold $\sigma(\cdot,\alpha)$ exists. Following as in the previous subsection, we will let $\psi^*(t;\psi^0,\sigma+\rho)$ denote that solution to the initial value problem
\begin{equation}\label{PhaseIVP2}
	\begin{cases}
		\dot{\psi} = \alpha G(\sigma(\psi,\alpha)+\rho(t),\psi,\alpha), \\
		\psi(0) = \psi^0.
	\end{cases}
\end{equation}
Note again that we require the condition $\rho \in \tilde{Y}(\delta)$ for $\delta \leq \frac{a}{16}$ to guarantee that $\bar{r}_{i,j}(\alpha) + \sigma_{i,j}(\psi,\alpha) + \rho(t) \geq \frac{a}{8}$, to avoid the singularity in the phase equations when $\bar{r}_{i,j}(\alpha) + \sigma_{i,j}(\psi,\alpha) + \rho(t) = 0$. This leads to our first result which is similar to Lemma~\ref{lem:InvManPhase}.

\begin{lem}\label{lem:InvManPhase2} 
	There exists a constant $C_2 > 0$ such that for all $\delta\in[0,\frac{a}{16}]$, $\rho,\tilde{\rho} \in \tilde{Y}(\delta)$, $\psi \in \ell^1$ and $\alpha \in [0,\alpha_{X,2}]$ sufficiently small we have
	\begin{equation} \label{PhaseBnd3}
		\|\psi^*(t;\psi,\sigma+\rho) - \psi^*(t;\psi,\sigma+\tilde{\rho})\|_1 \leq e^{\alpha C_2t}\|\rho - \tilde{\rho}\|_Y,
	\end{equation}
	for all $t \geq 0$.
\end{lem}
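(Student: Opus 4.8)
The plan is to mirror the proof of Lemma~\ref{lem:InvManPhase}, but now tracking the dependence on the perturbation $\rho$ rather than the initial phase or the manifold function $\sigma$. The starting point is the global Lipschitz bound on $G$ analogous to (\ref{PhaseSolnBnd}): since each $G_{i,j}(s,\psi,\alpha)$ is globally Lipschitz in its first and second arguments (uniformly in $(i,j)$), and since $\delta \leq \frac{a}{16}$ keeps $\bar{r}_{i,j}(\alpha) + \sigma_{i,j}(\psi,\alpha) + \rho(t)$ bounded away from the singularity at zero (by at least $\frac{a}{8}$), there exists a constant $C > 0$, uniform in the relevant parameters, so that
\[
	|G_{i,j}(\sigma(\psi,\alpha)+\rho(t),\psi,\alpha) - G_{i,j}(\sigma(\psi,\alpha)+\tilde{\rho}(t),\psi,\alpha)| \leq C|\rho_{i,j}(t) - \tilde{\rho}_{i,j}(t)| + C\sum_{i',j'}|\rho_{i',j'}(t) - \tilde{\rho}_{i',j'}(t)|.
\]
Here the same evaluation point $\psi$ is used for both, so the phase-difference terms and the radial terms coming from $\sigma$ itself do not contribute to this particular estimate; only the explicit $\rho$-dependence survives.

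Next I would pass to the integral formulation of (\ref{PhaseIVP2}), exactly as in (\ref{InvManIVPInt}), and estimate $\|\psi^*(t;\psi,\sigma+\rho) - \psi^*(t;\psi,\sigma+\tilde{\rho})\|_1$. The subtlety is that the two solutions share the initial condition $\psi^0$, so the difference at $t=0$ vanishes and the entire growth comes from the integral term. Taking $\ell^1$ norms, applying the bound above (the factor $5C$ appearing from the self term plus the four nearest-neighbours), and crucially noting $\|\rho(u) - \tilde{\rho}(u)\|_1 \leq \|\rho - \tilde{\rho}\|_Y$ for all $u \geq 0$ by definition of the $Y$-norm, I obtain an inequality of the form
\[
	\|\psi^*(t;\psi,\sigma+\rho) - \psi^*(t;\psi,\sigma+\tilde{\rho})\|_1 \leq 5C\alpha\int_0^t \|\rho - \tilde{\rho}\|_Y \,\mathrm{d}u + 5C\alpha\int_0^t \|\psi^*(u;\psi,\sigma+\rho) - \psi^*(u;\psi,\sigma+\tilde{\rho})\|_1 \,\mathrm{d}u.
\]
This is linear growth in $t$ from the source term plus a self-coupled integral term, the same structure encountered in the proof of (\ref{PhaseBnd2}).

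From here Gronwall's inequality produces a bound of the form $5C\alpha t\, e^{5C\alpha t}\|\rho - \tilde{\rho}\|_Y$, and then one absorbs the polynomial prefactor into a slightly larger exponential: there exists $C_2 > 0$ with $5C\alpha t\, e^{5C\alpha t} \leq e^{\alpha C_2 t}$ for all $t \geq 0$ and $\alpha \in [0,\alpha_{X,2}]$, yielding exactly (\ref{PhaseBnd3}). The main obstacle, and the only genuinely new ingredient relative to Lemma~\ref{lem:InvManPhase}, is verifying the uniform Lipschitz constant $C$ for $G$ in its radial argument \emph{in the presence of the time-dependent perturbation} $\rho(t)$: one must confirm that the denominator $\bar{r}_{i,j}(\alpha) + \sigma_{i,j}(\psi,\alpha) + \rho(t)$ stays uniformly bounded below, which is precisely what the restriction $\delta \leq \frac{a}{16}$ together with $\|\sigma(\psi,\alpha)\|_\infty \leq \sqrt{\alpha}$ secures once $\alpha$ is taken small enough that $\sqrt{\alpha} \leq \frac{a}{16}$. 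Everything else is a routine repetition of the Gronwall argument already carried out in Lemma~\ref{lem:InvManPhase}.
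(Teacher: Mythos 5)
Your strategy coincides with the paper's: a Lipschitz estimate on $G$, the integral formulation of (\ref{PhaseIVP2}), a Gronwall argument, and absorption of the linear-in-$t$ prefactor into a larger exponential (indeed $5C\alpha t\,e^{5C\alpha t} \leq e^{10C\alpha t}$), and your observation that $\delta \leq \frac{a}{16}$ together with $\sqrt{\alpha}$ small keeps the denominators $\bar{r}_{i,j}(\alpha) + \sigma_{i,j}(\psi,\alpha) + \rho_{i,j}(t)$ uniformly away from zero is exactly the right precaution. However, there is one step where the estimate you state does not support the inequality you then write down. Your displayed Lipschitz bound freezes the phase argument: it compares $G_{i,j}(\sigma(\psi,\alpha)+\rho(t),\psi,\alpha)$ with $G_{i,j}(\sigma(\psi,\alpha)+\tilde{\rho}(t),\psi,\alpha)$ at the \emph{same} $\psi$. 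But inside the integral formulation the two integrands are $G$ evaluated along two \emph{different} phase trajectories, $\psi^*(u;\psi,\sigma+\rho)$ and $\psi^*(u;\psi,\sigma+\tilde{\rho})$, with $\sigma$ composed with those different trajectories as well. After the natural triangle-inequality splitting, your bound controls only the piece in which $\rho$ alone changes; the remaining piece, in which both the phase slot and the argument of $\sigma$ change, is precisely what produces the self-coupled term $\int_0^t\|\psi^*(u;\psi,\sigma+\rho)-\psi^*(u;\psi,\sigma+\tilde{\rho})\|_1\,\mathrm{d}u$ in your inequality, and you never state the estimate that controls it. As written, that term appears out of nowhere.

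The repair is the estimate the paper actually proves: for all $\psi,\tilde{\psi}\in\ell^1$ and $\rho,\tilde{\rho}\in Y(\delta)$,
$\|G(\sigma(\psi,\alpha)+\rho(t),\psi,\alpha)-G(\sigma(\tilde{\psi},\alpha)+\tilde{\rho}(t),\tilde{\psi},\alpha)\|_1 \leq 5C\|\rho(t)-\tilde{\rho}(t)\|_1 + 8(5\sqrt{\alpha}+1)C\|\psi-\tilde{\psi}\|_1$,
where the second coefficient collects two contributions: the Lipschitz dependence of $G$ on its phase slot, giving $CQ_1(\psi-\tilde{\psi}) \leq 8C\|\psi-\tilde{\psi}\|_1$ by Lemma~\ref{lem:NormBnds}, and the Lipschitz dependence on its radial slot composed with the manifold's Lipschitz property $\|\sigma(\psi,\alpha)-\sigma(\tilde{\psi},\alpha)\|_1 \leq \sqrt{\alpha}Q_1(\psi-\tilde{\psi}) \leq 8\sqrt{\alpha}\|\psi-\tilde{\psi}\|_1$ from Theorem~\ref{thm:InvMan}. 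Applying this with the two trajectories $\psi^*(u;\psi,\sigma+\rho)$ and $\psi^*(u;\psi,\sigma+\tilde{\rho})$ substituted for $\psi$ and $\tilde{\psi}$ yields your integral inequality (with coefficient $8(5\sqrt{\alpha}+1)C\alpha$ rather than $5C\alpha$ on the self-coupled term, which is immaterial for Gronwall). Since you already note at the outset that $G$ is Lipschitz in both of its first two arguments, this is a routine completion rather than a flaw in strategy; with it supplied, the rest of your argument coincides with the paper's (omitted) final steps.
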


\begin{proof}
	This proof proceeds in a similar way to that of (\ref{PhaseBnd2}) in Lemma~\ref{lem:InvManPhase}, and therefore many details will be omitted. Nearly identical manipulations to those undertaken in the proof of Lemma~\ref{lem:InvManPhase} yield the existence of a constant $C > 0$ such that
	\[
		\begin{split}
		|G_{i,j}(\sigma(\psi,\alpha) + \rho(t),\psi,\alpha) &- G_{i,j}(\sigma(\tilde{\psi},\alpha)+\tilde{\rho}(t),\tilde{\psi},\alpha)| \leq C\bigg(|\rho_{i,j}(t) + \tilde{\rho}_{i,j}(t)| +\sum_{i',j'}|\rho_{i',j'}(t) + \tilde{\rho}_{i',j'}(t)| \\ 
		&+ |\sigma_{i,j}(\psi,\alpha) + \sigma_{i,j}(\tilde{\psi},\alpha)| +\sum_{i',j'}|\sigma_{i',j'}(\psi,\alpha) + \sigma_{i',j'}(\tilde{\psi},\alpha)| \\
		&+ |\psi_{i,j} - \tilde{\psi}_{i',j'}| + \sum_{i',j'} |\psi_{i',j'} - \tilde{\psi}_{i',j'}|\bigg), 
		\end{split}
	\]  
	for all $\rho,\tilde{\rho} \in \tilde{Y}(\delta)$ with $\delta\in[0,\frac{a}{16}]$, $\psi,\tilde{\psi} \in \ell^1$ and $\alpha$ taken sufficiently small. Then, taking the sum over all $(i,j)\in\mathbb{Z}^2$ we obtain
	\[
		\begin{split}
			\|G(\sigma(\psi,\alpha) + \rho(t),\psi,\alpha) - G(\sigma(\tilde{\psi},\alpha)+\tilde{\rho}(t),\tilde{\psi},\alpha)\|_1 &\leq 5C\|\rho(t) - \tilde{\rho}(t)\|_1 + 5C\|\sigma(\psi,\alpha) - \sigma(\tilde{\psi},\alpha)\|_1 \\ 
			&+ CQ_1(\psi-\tilde{\psi}) \\
			&\leq 5C\|\rho(t) - \tilde{\rho}(t)\|_1 + (5\sqrt{\alpha}+1)C{\alpha}Q_1(\psi-\tilde{\psi}) \\ 
			&\leq 5C\|\rho(t) - \tilde{\rho}(t)\|_1 + 8(5\sqrt{\alpha}+1)C\|\psi - \tilde{\psi}\|_1,   
		\end{split}	
	\]
	where we have used the fact that $\|\sigma(\psi,\alpha) - \sigma(\tilde{\psi},\alpha)\|_1 \leq \sqrt{\alpha}Q_1(\psi-\tilde{\psi}) \leq 8\sqrt{\alpha}\|\psi-\tilde{\psi}\|_1$, which comes from both the properties of the invariant manifold and the inequalities of Lemma~\ref{lem:NormBnds}.
	
	Then, the integral form of the initial value problem (\ref{PhaseIVP2}) is given by
	\[
		\psi^*(t;\psi,\sigma+\rho) = \psi + \alpha\int_0^t G(\sigma(\psi^*(u;\psi,\sigma+\rho),\alpha) + \rho(u),\psi^*(u;\psi,\sigma+\rho),\alpha)\mathrm{d}u.
	\]
	Using this integral form and the above previously proven inequality we obtain
	\[
		\begin{split}
		\|\psi^*(t;\psi,\sigma+\rho) &- \psi^*(t;\psi,\sigma+\tilde{\rho})\|_1 \\ 
		&\leq \alpha\int_0^t \bigg[5C\|\rho(u) - \tilde{\rho}(u)\|_1 + 8(5\sqrt{\alpha}+1)C\|\psi^*(u;\psi,\sigma+\rho) - \psi^*(u;\psi,\sigma+\tilde{\rho})\|_1\bigg]\mathrm{d}u \\
		&\leq \alpha\int_0^t \bigg[5C\|\rho - \tilde{\rho}\|_Y + 8(5\sqrt{\alpha}+1)C\|\psi^*(u;\psi,\sigma+\rho) - \psi^*(u;\psi,\sigma+\tilde{\rho})\|_1\bigg]\mathrm{d}u \\
		&= 5C\alpha\|\rho - \tilde{\rho}\|_Yt + 8(5\sqrt{\alpha}+1)C\int_0^t \|\psi^*(u;\psi,\sigma+\rho) - \psi^*(u;\psi,\sigma+\tilde{\rho})\|_1\mathrm{d}u.
		\end{split}
	\]
	From here the bound (\ref{PhaseBnd3}) is obtained by an application of Gronwall's inequality, and follows as in the proof of (\ref{PhaseBnd2}). Therefore, we omit these final steps due to their redundancy and complete the proof of the lemma. 
\end{proof} 

Now, taking $s = \sigma + \rho$, we use the differential equation (\ref{FullODE}) and the fact that $\sigma$ is an invariant manifold of this differential equation to obtain the autonomous dynamical system governing the evolution of $\rho$:
\begin{equation}\label{rhoEqn}
	\dot{\rho} = F(\sigma(\psi^*(t;\psi,\sigma+\rho),\alpha) +\rho,\psi^*(t;\psi,\sigma+\rho),\alpha) - F(\sigma(\psi^*(t;\psi,\sigma+\rho),\alpha),\psi^*(t;\psi,\sigma+\rho),\alpha),
\end{equation}
which we use the definition of $F$ in (\ref{Fdefn}) to write explicitly as
\begin{equation}\label{rhoEqnij}
	\begin{split}
	\dot{\rho}_{i,j} = \alpha&\sum_{i',j'} \bigg[\rho_{i',j'}\cos(\bar{\theta}_{i',j'}(\alpha) + \psi_{i',j'}^*(t;\psi,\sigma+\rho) - \bar{\theta}_{i,j}(\alpha) - \psi_{i,j}^*(t;\psi,\sigma+\rho)) - \rho_{i,j}\bigg] \\ 
	&+ (\bar{r}_{i,j}(\alpha) + \sigma_{i,j}(\psi^*(t;\psi,\sigma+\rho),\alpha) + \rho_{i,j})\lambda(\bar{r}_{i,j}(\alpha) + \sigma_{i,j}(\psi^*(t;\psi,\sigma+\rho),\alpha) + \rho_{i,j}) \\ 
	&- (\bar{r}_{i,j}(\alpha) + \sigma_{i,j}(\psi^*(t;\psi,\sigma+\rho),\alpha))\lambda(\bar{r}_{i,j}(\alpha) + \sigma_{i,j}(\psi^*(t;\psi,\sigma+\rho),\alpha))
	\end{split}
\end{equation}
for each $(i,j) \in \mathbb{Z}^2$. Note that this differential equation has a steady-state solution $\rho = 0$, representing the flow on the invariant manifold since $\rho$ is being used to capture deviations from this manifold.

We may further recast (\ref{rhoEqn}) using the function $\tilde{F}$ introduced in (\ref{FTilde}) so that 
\[
	\dot{\rho} = a\lambda'(a)\rho + \tilde{F}(\sigma(\psi^*(t;\psi,\sigma+\rho),\alpha) +\rho,\psi^*(t;\psi,\sigma+\rho),\alpha) - \tilde{F}(\sigma(\psi^*(t;\psi,\sigma+\rho),\alpha),\psi^*(t;\psi,\sigma+\rho),\alpha),
\]
which in turn allows one to apply the variation of constants formula for the differential equation (\ref{rhoEqn}) with initial value $\rho(0) = \rho^0 \in \ell^1$ to obtain
\[
	\begin{split}
	\rho(t) = e^{a\lambda'(a)t}\rho^0 &+ \int_0^t e^{a\lambda'(a)(t-u)}\bigg[\tilde{F}(\sigma(\psi^*(u;\psi,\sigma+\rho)) +\rho(u),\psi^*(u;\psi,\sigma+\rho),\alpha)\\ 
	&- \tilde{F}(\sigma(\psi^*(u;\psi,\sigma+\rho)),\psi^*(u;\psi,\sigma+\rho),\alpha)\bigg]du.  
	\end{split}
\] 
Let us then define the mapping, denoted $T_2$, as 
\begin{equation}\label{T2Mapping} 
	\begin{split}
	T_2\rho(t) := e^{a\lambda'(a)t}\rho^0 &+ \int_0^t e^{a\lambda'(a)(t-u)}\bigg[\tilde{F}(\sigma(\psi^*(u;\psi,\sigma+\rho)) +\rho(u),\psi^*(u;\psi,\sigma+\rho),\alpha)\\ 
	&- \tilde{F}(\sigma(\psi^*(u;\psi,\sigma+\rho)),\psi^*(u;\psi,\sigma+\rho),\alpha)\bigg]du.
	\end{split}
\end{equation} 
so that fixed points of $T_2$ correspond to solutions of (\ref{rhoEqn}) with initial value $\rho(0) = \rho^0 \in \ell^1$.

\begin{lem}\label{lem:T2WellDefined} 
	There exists $\delta_1,\alpha_{Y,1} > 0$ such that for all $\alpha \in [0,\alpha_{Y,1}]$, $\delta \in (0,\delta_1]$ and $\rho^0 \in \ell^1$ with $\|\rho^0\|_1 \leq \delta$ we have that $T_2:Y(\delta) \to Y(\delta)$ is well-defined.
\end{lem}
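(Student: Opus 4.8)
The plan is to prove that $T_2$ maps $Y(\delta)$ into itself by estimating $\|T_2\rho(t)\|_1$ directly against the required weight $2\delta e^{\frac{a\lambda'(a)}{2}t}$. The crucial ingredient is a Lipschitz-type bound on the nonlinearity in the $s$-variable \emph{alone}, which is available here because both copies of $\tilde F$ in (\ref{T2Mapping}) are evaluated at the same phase $\psi^*(u;\psi,\sigma+\rho)$; hence the difference of their first arguments is exactly $\rho(u)$, and all cosine/phase-dependent pieces cancel. Concretely, I would first show that there is a constant $C>0$, independent of the small parameters, such that for every $\rho\in Y(\delta)$,
\[
\|\tilde{F}(\sigma(\psi^*)+\rho,\psi^*,\alpha)-\tilde{F}(\sigma(\psi^*),\psi^*,\alpha)\|_1 \le \kappa\|\rho\|_1, \qquad \kappa := C(\alpha+\sqrt{\alpha}+\delta),
\]
where I abbreviate $\psi^* = \psi^*(u;\psi,\sigma+\rho)$.

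To obtain this estimate I would work from the componentwise form (\ref{rhoEqnij}). The coupling sum contributes a term bounded by $\alpha\big(\sum_{i',j'}|\rho_{i',j'}|+4|\rho_{i,j}|\big)$, which upon summation over $(i,j)$ gives at most $8\alpha\|\rho\|_1$. For the reaction part I set $g(w)=w\lambda(w)$ and note that the definition of $\tilde F$ in (\ref{FTilde}) subtracts precisely $a\lambda'(a)\rho_{i,j}$; since $\lambda(a)=0$ forces $g'(a)=a\lambda'(a)$, Taylor's theorem about $w_0 := \bar r_{i,j}(\alpha)+\sigma_{i,j}(\psi^*)$ yields
\[
|g(w_0+\rho_{i,j})-g(w_0)-a\lambda'(a)\rho_{i,j}| \le \big(|g'(w_0)-g'(a)|+C|\rho_{i,j}|\big)|\rho_{i,j}|.
\]
Because $|w_0-a|\le C_r\alpha+\sqrt{\alpha}$ by (\ref{RotWaveLip}) together with the manifold bound $\|\sigma\|_\infty\le\sqrt{\alpha}$, and because $|\rho_{i,j}|\le\|\rho(t)\|_1\le 2\delta$, the right-hand side is at most $C(\sqrt{\alpha}+\delta)|\rho_{i,j}|$. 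Summing over $(i,j)$ produces the stated $\kappa$. This is the same kind of manipulation that yields the constants $C_\lambda,C'_\lambda$ in Lemma~\ref{lem:T1WellDefined}, so I would reuse those computations rather than redo them.

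Next I would insert this into (\ref{T2Mapping}). Writing $\mu:=-a\lambda'(a)>0$, the homogeneous term satisfies $\|e^{a\lambda'(a)t}\rho^0\|_1\le\delta e^{a\lambda'(a)t}\le\delta e^{\frac{a\lambda'(a)}{2}t}$, using $a\lambda'(a)t\le\frac{a\lambda'(a)}{2}t$ for $t\ge0$. For the Duhamel integral, inserting $\|\rho(u)\|_1\le2\delta e^{\frac{a\lambda'(a)}{2}u}$ and the identity $e^{-\mu(t-u)}e^{-\frac{\mu}{2}u}=e^{-\mu t}e^{\frac{\mu}{2}u}$ gives
\[
\int_0^t e^{a\lambda'(a)(t-u)}\kappa\|\rho(u)\|_1\,\mathrm{d}u \le 2\kappa\delta\, e^{-\mu t}\int_0^t e^{\frac{\mu}{2}u}\,\mathrm{d}u \le \frac{4\kappa\delta}{\mu}e^{\frac{a\lambda'(a)}{2}t}.
\]
Combining the two pieces yields $\|T_2\rho(t)\|_1\le\delta(1+\tfrac{4\kappa}{\mu})e^{\frac{a\lambda'(a)}{2}t}$, so it suffices to choose $\alpha_{Y,1}$ and $\delta_1$ small enough that $\kappa\le\mu/4$; then $\|T_2\rho(t)\|_1\le2\delta e^{\frac{a\lambda'(a)}{2}t}$, i.e. $T_2\rho\in Y(\delta)$. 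The same bound shows the integrand lies in $\ell^1$ with integrable norm, so $T_2\rho(t)$ is a well-defined element of $\ell^1$ for each $t\ge0$.

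The main obstacle is not any individual estimate — each mirrors Lemma~\ref{lem:T1WellDefined} — but the bookkeeping of the two competing exponential rates: the semigroup $e^{a\lambda'(a)t}$ decays at rate $\mu$, whereas the target weight decays at the slower rate $\mu/2$, and the contraction into $Y(\delta)$ closes only because the nonlinear Lipschitz constant $\kappa$ is $o(1)$ as $(\alpha,\delta)\to0$. Verifying that $\kappa$ genuinely vanishes — in particular that the reaction term produces no $O(1)$ contribution, which hinges on the cancellation $g'(a)=a\lambda'(a)$ encoded in the definition (\ref{FTilde}) of $\tilde F$ — is the step demanding the most care.
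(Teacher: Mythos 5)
Your proposal is correct and follows essentially the same route as the paper: the same splitting of the integrand into the coupling part (bounded by $8\alpha\|\rho\|_1$ after summation) and the reaction part (where the cancellation $g'(a)=a\lambda'(a)$ built into $\tilde F$ yields a Lipschitz constant that vanishes as $(\alpha,\delta)\to 0$), followed by the identical Duhamel estimate against the weight $e^{\frac{a\lambda'(a)}{2}t}$ and the same smallness choices of $\alpha_{Y,1},\delta_1$. If anything, your bookkeeping of the small Lipschitz constant $\kappa = C(\alpha+\sqrt{\alpha}+\delta)$ is slightly more careful than the paper's stated $C_\lambda'(\alpha+\delta)$, since it explicitly tracks the $\sqrt{\alpha}$ contribution from the manifold bound $\|\sigma\|_\infty\le\sqrt{\alpha}$.
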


\begin{proof}
	As in the proof of Lemma~\ref{lem:T1WellDefined}, we break the integrand up into smaller components to makes things more manageable, then put them back together at the end. To begin, we use the definition of $\tilde{F}$ along with (\ref{rhoEqnij}) to note that 
	\begin{equation}\label{rhoEqn2}
		\begin{split}
			&\tilde{F}(\sigma(\psi^*(t;\psi,\sigma+\rho)) +\rho(t),\psi^*(t;\psi,\sigma+\rho),\alpha) - \tilde{F}(\sigma(\psi^*(t;\psi,\sigma+\rho)),\psi^*(t;\psi,\sigma+\rho),\alpha) \\
			&= \alpha\sum_{i',j'} \bigg[\rho_{i',j'}\cos(\bar{\theta}_{i',j'}(\alpha) + \psi_{i',j'}^*(t;\psi,\sigma+\rho) - \bar{\theta}_{i,j}(\alpha) - \psi_{i,j}^*(t;\psi,\sigma+\rho)) - \rho_{i,j}\bigg] \\ 
			&+ (\bar{r}_{i,j}(\alpha) + \sigma_{i,j}(\psi^*(t;\psi,f+\rho),\alpha) + \rho_{i,j})\lambda(\bar{r}_{i,j}(\alpha) + \sigma_{i,j}(\psi^*(t;\psi,\sigma+\rho),\alpha) + \rho_{i,j}) \\ 
			&- (\bar{r}_{i,j}(\alpha) + \sigma_{i,j}(\psi^*(t;\psi,f+\rho),\alpha))\lambda(\bar{r}_{i,j}(\alpha) + \sigma_{i,j}(\psi^*(t;\psi,\sigma+\rho),\alpha)) - a\lambda'(a)\rho,	
		\end{split}
	\end{equation}
	where the only change from (\ref{rhoEqnij}) is that addition of the $-a\lambda'(a)\rho$ term at the end.
	
	Now, 
	\[
		|\rho_{i',j'}\cos(\bar{\theta}_{i',j'}(\alpha) + \psi_{i',j'}^*(t;\psi,\sigma+\rho) - \bar{\theta}_{i,j}(\alpha) - \psi_{i,j}^*(t;\psi,\sigma+\rho))| \leq |\rho_{i',j'}|,
	\]
	for all $(i,j)\in\mathbb{Z}^2$. This therefore gives that 
	\[
		\begin{split}
		\alpha\bigg|\sum_{i',j'} &\bigg[\rho_{i',j'}\cos(\bar{\theta}_{i',j'}(\alpha) + \psi_{i',j'}^*(t;\psi,\sigma+\rho) - \bar{\theta}_{i,j}(\alpha) - \psi_{i,j}^*(t;\psi,\sigma+\rho)) - \rho_{i,j}\bigg]\bigg| \leq 4\alpha|\rho_{i,j}| + \alpha\sum_{i',j'}|\rho_{i',j'}|.
		\end{split}
	\] 
	Furthermore, the remaining parts of (\ref{rhoEqn2}) can be compactly written as the function 
	\[
		(x_{i,j}+\rho_{i,j})\lambda(x_{i,j}+\rho_{i,j}) - x_{i,j}\lambda(x_{i,j}) - a\lambda'(a)\rho_{i,j}, 
	\] 
	where $x_{i,j} = \bar{r}_{i,j}(\alpha) + \sigma_{i,j}(\psi^*(t;\psi,\sigma+\rho),\alpha)$ for notational convenience. Then, since the terms $\bar{r}_{i,j}(\alpha) + \sigma_{i,j}(\psi^*(t;\psi,\sigma+\rho),\alpha)$ are uniformly bounded, a nearly identical argument to that employed in (\ref{C_lambda}) results in the existence of a constant $C'_\lambda > 0$ such that 
	\[
		|(x_{i,j}+\rho_{i,j})\lambda(x_{i,j}+\rho_{i,j}) - x_{i,j}\lambda(x_{i,j}) - a\lambda'(a)\rho_{i,j}| \leq C'_\lambda (\alpha + \delta)|\rho_{i,j}|,
	\]  
	for every $\delta \in [0,1]$ and $\alpha \leq \min\{\alpha^*,\frac{a}{4}\}$, uniformly in $(i,j)\in\mathbb{Z}^2$.
	
	Putting this all together then gives that 
	\[
		\begin{split}
		\|\tilde{F}&(\sigma(\psi^*(t;\psi,\sigma+\rho)) +\rho,\psi^*(t;\psi,\sigma+\rho),\alpha) - \tilde{F}(\sigma(\psi^*(t;\psi,\sigma+\rho)),\psi^*(t;\psi,\sigma+\rho),\alpha)\|_1 \\ 
		&\leq 4\alpha\sum_{i,j}|\rho_{i,j}| + \alpha\sum_{i,j}\sum_{i',j'}|\rho_{i',j'}| + C'_\lambda (\alpha + \delta)\sum_{i,j}|\rho_{i,j}| \\
		&= 4\alpha\|\rho(t)\|_1 + 4\alpha\|\rho\|_1 + C'_\lambda (\alpha + \delta)\|\rho\|_1 \\
		&= (8\alpha+ C'_\lambda\alpha + C'_\lambda\delta)\|\rho\|_1.  	
		\end{split}
	\]
	Then, this therefore implies that for any $\rho \in Y(\delta)$ we have 
	\[
		\begin{split}
			\|T_2\rho(t)\|_1 &\leq e^{a\lambda'(a)t}\|\rho^0\|_1 + (8\alpha + C'_\lambda\alpha + C'_\lambda\delta)\int_0^te^{a\lambda'(a)(t-u)}\|\rho(u)\|_1du \\
			&\leq e^{a\lambda'(a)t}\|\rho^0\|_1 + 2\delta(8\alpha + C'_\lambda\alpha + C'_\lambda\delta)\int_0^te^{a\lambda'(a)(t-u)}e^{\frac{a\lambda'(a)}{2}u}du \\
			&= e^{a\lambda'(a)t}\|\rho^0\|_1 + \frac{4\delta(8\alpha + C'_\lambda\alpha + C'_\lambda\delta)}{|a\lambda'(a)|}\bigg(e^{\frac{a\lambda'(a)}{2}t} - e^{a\lambda'(a)t}\bigg) \\
			&\leq e^{a\lambda'(a)t}\|\rho^0\|_1 + \frac{4\delta(8\alpha + C'_\lambda\alpha + C'_\lambda\delta)}{|a\lambda'(a)|}e^{\frac{a\lambda'(a)}{2}t}.
		\end{split}
	\]
	Then, taking $\|\rho^0\|_1 \leq \delta$, we can use the fact that $e^{a\lambda'(a)t} \leq e^{\frac{a\lambda'(a)}{2}t}$ for all $t \geq 0$ to see that for all $\delta \in (0,1]$ we have 
	\[
		\|T_2\rho(t)\|_1 \leq \delta\bigg(1 + \frac{\alpha(8 + C'_\lambda) + 4\delta C'_\lambda}{|a\lambda'(a)|}\bigg)e^{\frac{a\lambda'(a)}{2}t}	,
	\]
	and therefore one sees that upon taking 
	\[
		\alpha_{Y,1} := \min\bigg\{\alpha^*,\frac{a}{4},\frac{|a\lambda'(a)|}{16+2C'_\lambda}\bigg\}
	\]
	and
	\[
		\delta \leq \min\bigg\{\frac{|a\lambda'(a)|}{8C'_\lambda},1\bigg\},
	\]
	gives that 
	\[
		\|T_2\rho(t)\|_1 \leq 2\delta e^{\frac{a\lambda'(a)}{2}t},
	\]
	for all $\alpha \in [0,\alpha_{Y,1}]$ and $\delta > 0$ chosen appropriately small. Our choices of $\alpha$ and $\delta$ therefore show that $T_2$ maps elements of $Y(\delta)$ back into $Y(\delta)$, completing the proof.
\end{proof} 

\begin{lem}\label{lem:T2Contraction} 
	There exists $\delta_2,\alpha_{Y,2} > 0$ such that for all $\alpha \in [0,\alpha_{Y,2}]$, $\delta\in(0,\delta_2]$ and $\rho^0 \in \ell^1$ with $\|\rho^0\|_1 \leq \delta$ we have that $T_2$ is a contraction on $Y(\delta)$ with contraction constant at most $\frac{1}{2}$.
\end{lem}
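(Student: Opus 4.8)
The plan is to mimic the well-definedness argument of Lemma~\ref{lem:T2WellDefined}, now estimating the difference of two images rather than a single image. Fix $\psi\in\ell^1$, $\rho^0\in\ell^1$ with $\|\rho^0\|_1\leq\delta$, and $\rho,\tilde\rho\in Y(\delta)$. Since both $T_2\rho$ and $T_2\tilde\rho$ are built from the same initial datum $\rho^0$ and the same fixed phase $\psi$, the leading term $e^{a\lambda'(a)t}\rho^0$ cancels in the difference, leaving
\[
	T_2\rho(t)-T_2\tilde\rho(t)=\int_0^t e^{a\lambda'(a)(t-u)}\big[N(\rho,u)-N(\tilde\rho,u)\big]\,\mathrm{d}u,
\]
where $N(\rho,u)$ denotes the bracketed integrand of (\ref{T2Mapping}), written componentwise in (\ref{rhoEqn2}). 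Abbreviating $\psi^\rho(u):=\psi^*(u;\psi,\sigma+\rho)$, I would split each component of $N(\rho,u)$ into the coupling part $\alpha\sum_{i',j'}[\rho_{i',j'}\cos(\bar\theta_{i',j'}(\alpha)+\psi^\rho_{i',j'}(u)-\bar\theta_{i,j}(\alpha)-\psi^\rho_{i,j}(u))-\rho_{i,j}]$ and the radial part $H(x^\rho_{i,j},\rho_{i,j})$, where $H(x,r):=(x+r)\lambda(x+r)-x\lambda(x)-a\lambda'(a)r$ and $x^\rho_{i,j}=\bar r_{i,j}(\alpha)+\sigma_{i,j}(\psi^\rho(u),\alpha)$. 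In the difference $N(\rho,u)-N(\tilde\rho,u)$ the discrepancies then arise through only two channels: a \emph{direct} dependence on $\rho(u)-\tilde\rho(u)$, and an \emph{indirect} dependence through $\psi^\rho(u)-\psi^{\tilde\rho}(u)$, which enters both the cosine phases and the argument of $\sigma$.

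For the direct channel I would reuse the estimates of Lemma~\ref{lem:T2WellDefined}. The coupling part contributes, after $|\cos|\leq 1$, a term bounded by $4\alpha|\rho_{i,j}-\tilde\rho_{i,j}|+\alpha\sum_{i',j'}|\rho_{i',j'}-\tilde\rho_{i',j'}|$; the radial part, via the splitting $H(x^{\tilde\rho}_{i,j},\rho_{i,j})-H(x^{\tilde\rho}_{i,j},\tilde\rho_{i,j})$ together with the fact that $\partial_r H$ vanishes at $(x,r)=(a,0)$ (since $\lambda(a)=0$), contributes a term bounded by $C'_\lambda(\alpha+\delta)|\rho_{i,j}-\tilde\rho_{i,j}|$, exactly as in the estimate (\ref{C_lambda}). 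Summing over $(i,j)\in\mathbb{Z}^2$ and using $\|\rho(u)-\tilde\rho(u)\|_1\leq\|\rho-\tilde\rho\|_Y$ produces a direct contribution of the form $C(\alpha+\delta)\|\rho-\tilde\rho\|_Y$, and integrating against $e^{a\lambda'(a)(t-u)}$ leaves the harmless factor $\int_0^t e^{a\lambda'(a)(t-u)}\,\mathrm{d}u\leq|a\lambda'(a)|^{-1}$, uniformly in $t$.

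The indirect channel is the main obstacle, because Lemma~\ref{lem:InvManPhase2} only controls $\|\psi^\rho(u)-\psi^{\tilde\rho}(u)\|_1\leq e^{\alpha C_2 u}\|\rho-\tilde\rho\|_Y$, a bound that \emph{grows} in $u$. The resolution is that every indirect term carries a compensating factor of $\rho$ or $\tilde\rho$. In the coupling part, $|\cos(\Theta^\rho)-\cos(\Theta^{\tilde\rho})|\leq|\Theta^\rho-\Theta^{\tilde\rho}|\leq 2\|\psi^\rho(u)-\psi^{\tilde\rho}(u)\|_\infty$ is multiplied by $|\tilde\rho_{i',j'}|$, so summing over the lattice yields $\lesssim\alpha\,\|\tilde\rho(u)\|_1\,\|\psi^\rho(u)-\psi^{\tilde\rho}(u)\|_\infty$; in the radial part, the remaining piece $H(x^\rho_{i,j},\rho_{i,j})-H(x^{\tilde\rho}_{i,j},\rho_{i,j})$ is $O(|\rho_{i,j}|)$ since $\partial_x H$ vanishes identically at $r=0$, hence it is bounded by $C|\rho_{i,j}|\,\|\sigma(\psi^\rho(u),\alpha)-\sigma(\psi^{\tilde\rho}(u),\alpha)\|_\infty$. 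Using the Lipschitz property of $\sigma$ with $p=\infty$, Lemma~\ref{lem:NormBnds}, and the embedding $\ell^1\subset\ell^\infty$ gives $\|\sigma(\psi^\rho(u),\alpha)-\sigma(\psi^{\tilde\rho}(u),\alpha)\|_\infty\leq 8\sqrt\alpha\,e^{\alpha C_2 u}\|\rho-\tilde\rho\|_Y$. Crucially, both compensating factors obey $\|\rho(u)\|_1,\|\tilde\rho(u)\|_1\leq 2\delta e^{\frac{a\lambda'(a)}{2}u}$, so each indirect term is bounded by a constant times $\sqrt\alpha\,\delta\,e^{\frac{a\lambda'(a)}{2}u}e^{\alpha C_2 u}\|\rho-\tilde\rho\|_Y$. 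Provided $\alpha$ is small enough that $\tfrac{a\lambda'(a)}{2}+\alpha C_2<0$, integrating against $e^{a\lambda'(a)(t-u)}$ leaves the uniformly bounded factor $\int_0^t e^{a\lambda'(a)(t-u)}e^{(\frac{a\lambda'(a)}{2}+\alpha C_2)u}\,\mathrm{d}u\leq(-\tfrac{a\lambda'(a)}{2}+\alpha C_2)^{-1}$, again uniform in $t$.

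Combining the two channels, I expect a bound of the form $\|T_2\rho(t)-T_2\tilde\rho(t)\|_1\leq K(\alpha,\delta)\|\rho-\tilde\rho\|_Y$ for all $t\geq0$, where $K(\alpha,\delta)=C\big((\alpha+\delta)+\sqrt\alpha\,\delta\big)\,|a\lambda'(a)|^{-1}\to0$ as $\alpha,\delta\to0$. I would then choose $\alpha_{Y,2}\leq\alpha_{Y,1}$ small enough to ensure both $\tfrac{a\lambda'(a)}{2}+\alpha C_2<0$ (needed for the indirect integral to converge uniformly in $t$) and $K(\alpha,\delta)\leq\tfrac12$, together with $\delta_2\leq\delta_1$, so that taking the supremum over $t\geq0$ gives $\|T_2\rho-T_2\tilde\rho\|_Y\leq\tfrac12\|\rho-\tilde\rho\|_Y$ for all $\alpha\in[0,\alpha_{Y,2}]$ and $\delta\in(0,\delta_2]$. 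This is precisely the claimed contraction, and combined with Lemma~\ref{lem:Y_Complete} and Lemma~\ref{lem:T2WellDefined} it will produce a unique fixed point of $T_2$, i.e.\ the decaying solution on the invariant manifold.
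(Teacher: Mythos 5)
Your proposal is correct and follows essentially the same route as the paper's (deliberately abbreviated) proof: both treat the difference of integrands via the structure of Lemma~\ref{lem:T2WellDefined}, invoke Lemma~\ref{lem:InvManPhase2} to control the dependence on $\psi^*(\cdot;\psi,\sigma+\rho)$ versus $\psi^*(\cdot;\psi,\sigma+\tilde{\rho})$, and neutralize the resulting growth $e^{\alpha C_2 u}$ with the decay $\|\rho(u)\|_1 \leq 2\delta e^{\frac{a\lambda'(a)}{2}u}$ under the smallness condition $\alpha C_2 + \frac{a\lambda'(a)}{2} < 0$, before integrating against the uniformly bounded kernel $e^{a\lambda'(a)(t-u)}$. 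In fact your write-up is somewhat more complete than the paper's, since you explicitly handle the indirect dependence of the radial ($\lambda$) terms through $\sigma(\psi^*(\cdot;\psi,\sigma+\rho),\alpha)$, which the paper only dispatches by analogy with Lemma~\ref{lem:T1Contraction}.
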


\begin{proof}
	The proof uses nearly identical manipulations to those employed in the proof of Lemma~\ref{lem:T2WellDefined}, and therefore we merely focus on those aspects that differentiate it. First, to obtain Lipschitz properties of the integrand
	\begin{equation}\label{Integrand1}
		\tilde{F}(\sigma(\psi^*(u;\psi,\sigma+\rho)) +\rho(u),\psi^*(u;\psi,\sigma+\rho),\alpha) - \tilde{F}(\sigma(\psi^*(u;\psi,\sigma+\rho)),\psi^*(u;\psi,\sigma+\rho),\alpha),
	\end{equation}
	with respect to $\rho$ much of the manipulations follow in a similar way to that of Lemma~\ref{lem:T1Contraction}, except the term
	\[
		\begin{split}
		|&\rho_{i',j'}\cos(\bar{\theta}_{i',j'}(\alpha) + \psi_{i',j'}^*(t;\psi,\sigma+\rho) - \bar{\theta}_{i,j}(\alpha) - \psi_{i,j}^*(t;\psi,\sigma+\rho)) \\ 
		&- \tilde{\rho}_{i',j'}\cos(\bar{\theta}_{i',j'}(\alpha) + \psi_{i',j'}^*(t;\psi,\sigma+\tilde{\rho}) - \bar{\theta}_{i,j}(\alpha) - \psi_{i,j}^*(t;\psi,\sigma+\tilde{\rho}))|,
		\end{split}
	\]
	which requires one to utilize the results of Lemma~\ref{lem:InvManPhase2}. That is, for $C_3 > 0$, the constant guaranteed by Lemma~\ref{lem:InvManPhase2}, we obtain the bound
	\[
		\begin{split}
			|&\rho_{i',j'}\cos(\bar{\theta}_{i',j'}(\alpha) + \psi_{i',j'}^*(t;\psi,\sigma+\rho) - \bar{\theta}_{i,j}(\alpha) - \psi_{i,j}^*(t;\psi,\sigma+\rho)) \\ 
		&- \tilde{\rho}_{i',j'}\cos(\bar{\theta}_{i',j'}(\alpha) + \psi_{i',j'}^*(t;\psi,\sigma+\tilde{\rho}) - \bar{\theta}_{i,j}(\alpha) - \psi_{i,j}^*(t;\psi,\sigma+\tilde{\rho}))| \\
		&\leq |\rho_{i',j'}|\cdot|\cos(\bar{\theta}_{i',j'}(\alpha) + \psi_{i',j'}^*(t;\psi,\sigma+\rho)- \bar{\theta}_{i,j}(\alpha) - \psi_{i,j}^*(t;\psi,\sigma+\rho))  \\ 
		&- \cos(\bar{\theta}_{i',j'}(\alpha) + \psi_{i',j'}^*(t;\psi,\sigma+\tilde{\rho}) - \bar{\theta}_{i,j}(\alpha) - \psi_{i,j}^*(t;\psi,\sigma+\tilde{\rho}))| \\ 
		&+|\rho_{i',j'} - \tilde{\rho}_{i',j'}|\cdot|\cos(\bar{\theta}_{i',j'}(\alpha) + \psi_{i',j'}^*(t;\psi,\sigma+\tilde{\rho}) - \bar{\theta}_{i,j}(\alpha) - \psi_{i,j}^*(t;\psi,\sigma+\tilde{\rho}))| \\
		&\leq |\rho_{i',j'}| Q_1(\psi^*(t;\psi,\sigma+\rho) - \psi^*(t;\psi,\sigma+\tilde{\rho})) +|\rho_{i',j'} - \tilde{\rho}_{i',j'}| \\
		&\leq 8|\rho_{i',j'}|\|\psi^*(t;\psi,\sigma+\rho) - \psi^*(t;\psi,f+\tilde{\rho})\|_1 +|\rho_{i',j'} - \tilde{\rho}_{i',j'}| \\
		&\leq 8e^{\alpha C_3 t}|\rho_{i',j'}| \|\rho-\tilde{\rho}\|_Y +|\rho_{i',j'} - \tilde{\rho}_{i',j'}|.    	
		\end{split}
	\]
	for all $t \geq 0$. Then, taking the sum over $(i,j)\in\mathbb{Z}^2$ gives
	\[
		\begin{split}
			\sum_{i,j}\sum_{i',j'}|&\rho_{i',j'}\cos(\bar{\theta}_{i',j'}(\alpha) + \psi_{i',j'}^*(t;\psi,\sigma+\rho) - \bar{\theta}_{i,j}(\alpha) - \psi_{i,j}^*(t;\psi,\sigma+\rho)) \\ 
		&- \tilde{\rho}_{i',j'}\cos(\bar{\theta}_{i',j'}(\alpha) + \psi_{i',j'}^*(t;\psi,\sigma+\tilde{\rho}) - \bar{\theta}_{i,j}(\alpha) - \psi_{i,j}^*(t;\psi,\sigma+\tilde{\rho}))| \\
		&\leq 8e^{\alpha C_3 t} \|\rho-\tilde{\rho}\|_Y\sum_{i,j}\sum_{i',j'}|\rho_{i',j'}| + \sum_{i,j}\sum_{i',j'}|\rho_{i',j'} - \tilde{\rho}_{i',j'}| \\
		&\leq 4(8e^{\alpha C_3 t}\|\rho(t)\|_1 + 1)\|\rho-\tilde{\rho}\|_Y \\
		&\leq 4(16\delta e^{(\alpha C_3 +\frac{a\lambda'(a)}{2})t} + 1)\|\rho-\tilde{\rho}\|_Y,     	
		\end{split}
	\]
	where we have applied the fact that $\rho \in Y(\delta)$ implies $\|\rho(t)\|_1 \leq 2\delta e^{\frac{a\lambda'(a)}{2}t}$. Then, taking $\alpha$ appropriately small will guarantee that 
	\[
		\alpha C_3 +\frac{a\lambda'(a)}{2} < 0,	
	\]
	and hence the bound $4(16\delta e^{(\alpha C_3 +\frac{a\lambda'(a)}{2})t} + 1)$ is bounded uniformly in $t\geq 0$.  
	
	From here much of the work is the same as in Lemmas~\ref{lem:T1Contraction} and \ref{lem:T2WellDefined}. Upon obtaining Lipschitz bounds on the integrand (\ref{Integrand1}), we simply use the fact that the integral
	\[
		\int_0^t e^{a\lambda'(a)(t-u)}\mathrm{d}u
	\]
	is uniformly bounded in $t$ (since $a\lambda'(a)<0$), showing that $T_2$ can be made a contraction via appropriately small choices of $\alpha,\delta > 0$. Therefore we state this result without a full proof since it follows in a similar way to much of the work that has been done in this subsection and the one previous to it. 
\end{proof} 

Lemmas~\ref{lem:T2WellDefined} and \ref{lem:T2Contraction} combine to show that if we choose $\|\rho^0\|_1$ sufficiently small, we have that the solution $\rho(t)$ to (\ref{rhoEqn}) converges exponentially to $0$. This means that for the original system (\ref{FullODE}), if we start sufficiently close to the invariant manifold, we will converge exponentially fast to the invariant manifold. This therefore completes the proof of Theorem~\ref{thm:InvMan}.

\section{Stability on the Invariant Manifold}\label{sec:Thm2Proof} 

As previously noted, system (\ref{FullODE}) is a fast-slow system which is singular at $\alpha = 0$. Now that we have analyzed the fast component, $s$, we wish to inspect the slow component, $\psi$. We begin by using Theorem~\ref{thm:InvMan} to write the fast variable as $s = \sigma(\psi,\alpha) + \rho$. Furthermore, the work of the previous section implies that the specific evolution of $\rho(t)$ depends on the evolution of the phase variable, $\psi(t)$. Coupling the global Lipschitz properties of the function $G$ with respect to $\psi$ with the results of Lemmas~\ref{lem:InvManPhase} and \ref{lem:InvManPhase2} implies that for all $\psi^0 \in \ell^1$ a global solution $\psi(t)$ exists and belongs to $\ell^1$ for all $t \geq 0$. Hence, for any fixed $\psi^0 \in \ell^1$ we consider $\rho(t) = \rho(t;\psi^0)$ to be the solution of (\ref{rhoEqn}) with $\rho(0) = \rho^0$ such that $\|\rho^0\|_1 \leq \delta$. Then, the results of Theorem~\ref{thm:InvMan} imply that $\|\rho(t)\|_1 \leq 2\delta e^{-\beta t}$, for some $\beta > 0$, regardless of our choice of $\psi^0 \in \ell^1$.

Having now solved the fast variable equation, we therefore introduce the slow-time variable $\tau := \alpha t$, for $\alpha > 0$. This results in the system 
\begin{equation}\label{GSlow}
	\frac{\mathrm{d}}{\mathrm{d}\tau}\psi = G(\sigma(\psi,\alpha)+\rho(\alpha^{-1}\tau),\psi(\tau),\alpha),
\end{equation}
along with an initial condition $\psi(0) = \psi^0 \in \ell^1$, so that $\rho(t)$ is well-defined. We note that $\|\rho(\alpha^{-1}\tau)\|_1\leq2\delta e^{-\frac{\beta}{\alpha}\tau}$, for $\alpha > 0$ so long as $\|\rho^0\|_1 \leq \delta$, for $\delta > 0$ taken sufficiently small. It is therefore through system (\ref{GSlow}) which we plan to investigate the stability of $\psi = 0$ in this section. As in the previous section, we will assume that Hypotheses~\ref{hyp:LambdaOmega}-\ref{hyp:LinearPhase} hold throughout, but do not explicitly say this in the statement of all results in this section.

\subsection{Integral Bounds}\label{subsec:IntegralBounds} 

In this section we provide two important integral bounds which will be used throughout the proof of the stability on the invariant manifold. The first of which is a restatement of a result which can be found in \cite{IntegralLemma} and therefore is stated without proof, whereas the second will be stated with proof since an appropriate reference could not be found. 

\begin{lem}[{\em \cite{IntegralLemma}, \S 3, Lemma 3.2}] \label{lem:IntegralLemma} 
	Let $\gamma_1, \gamma_2$ be positive real numbers. If $\gamma_1,\gamma_2 \neq 1$ or if $\gamma_1 = 1 < \gamma_2$ then there exists a $C_{\gamma_1,\gamma_2} > 0$ (continuously depending on $\gamma_1$ and $\gamma_2$) such that
	\begin{equation}
		\int_0^\tau (1 + \tau - u)^{- \gamma_1}(1 + u)^{-\gamma_2}\mathrm{d}u \leq C_{\gamma_1,\gamma_2} (1 + \tau)^{-\min\{\gamma_1 + \gamma_2 - 1, \gamma_1, \gamma_2\}},
	\end{equation}  	 
	for all $\tau \geq 0$.
\end{lem}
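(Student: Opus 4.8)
The plan is to prove the bound by the standard convolution split at the temporal midpoint $u=\tau/2$ and then to reconcile the exponents produced by the two halves with $\min\{\gamma_1+\gamma_2-1,\gamma_1,\gamma_2\}$ on a case-by-case basis. First I would write $\int_0^\tau = \int_0^{\tau/2}+\int_{\tau/2}^\tau$ and estimate each half separately, exploiting that on each half one of the two factors is comparable to its value at the common endpoint $u=\tau/2$.

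On the first half $[0,\tau/2]$ one has $\tau-u\ge \tau/2$, so $1+\tau-u\ge \tfrac12(1+\tau)$ and hence $(1+\tau-u)^{-\gamma_1}\le 2^{\gamma_1}(1+\tau)^{-\gamma_1}$. Pulling this factor out leaves $\int_0^{\tau/2}(1+u)^{-\gamma_2}\,du$, whose growth is dictated by $\gamma_2$: it is bounded by the constant $(\gamma_2-1)^{-1}$ when $\gamma_2>1$, and grows like $(1+\tau)^{1-\gamma_2}$ when $\gamma_2<1$. Thus the first half contributes a bound of order $(1+\tau)^{-\gamma_1}$ when $\gamma_2>1$ and of order $(1+\tau)^{-(\gamma_1+\gamma_2-1)}$ when $\gamma_2<1$. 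For the second half $[\tau/2,\tau]$ I would substitute $v=\tau-u$, which turns the integral into exactly the same object with the roles of $\gamma_1$ and $\gamma_2$ interchanged, giving a bound of order $(1+\tau)^{-\gamma_2}$ when $\gamma_1>1$ and of order $(1+\tau)^{-(\gamma_1+\gamma_2-1)}$ when $\gamma_1<1$.

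Summing the two halves, the slower-decaying one governs, so the overall exponent is the minimum of the two exponents produced above. A short inspection of the four sign patterns of $(\gamma_1-1,\gamma_2-1)$ shows this minimum equals $\min\{\gamma_1+\gamma_2-1,\gamma_1,\gamma_2\}$ in each generic case: when both indices exceed $1$ the answer is $\min\{\gamma_1,\gamma_2\}$; when both lie below $1$ it is $\gamma_1+\gamma_2-1$; and in the two mixed cases the smaller single index wins. For $\tau$ in any compact set the claimed inequality holds automatically for a large enough constant, since both sides are continuous and the right-hand side is bounded below away from zero, so only the large-$\tau$ asymptotics require the splitting argument; continuity of $C_{\gamma_1,\gamma_2}$ in the interior of each region is then inherited from the explicit constants $2^{\gamma_i}$ and $(\gamma_i-1)^{-1}$.

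The delicate case, and the one I expect to be the main obstacle, is the borderline $\gamma_1=1<\gamma_2$ that the hypothesis explicitly admits. Here the substituted half-integral degenerates to $\int_0^{\tau/2}(1+v)^{-1}\,dv=\ln(1+\tau/2)$, so the second half is only bounded by $C(1+\tau)^{-\gamma_2}\ln(1+\tau)$ rather than a clean power. The resolution is that the target exponent is $\min\{\gamma_2,1,\gamma_2\}=1$, and since $\gamma_2>1$ one can write $(1+\tau)^{-\gamma_2}\ln(1+\tau)=(1+\tau)^{-1}\cdot(1+\tau)^{-(\gamma_2-1)}\ln(1+\tau)$, where the trailing factor is bounded on $[0,\infty)$ because a strictly positive power dominates the logarithm; thus the log is absorbed and the $(1+\tau)^{-1}$ rate survives. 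This is precisely why the doubly critical configuration $\gamma_1=\gamma_2=1$ (and, more generally, any configuration placing the unit index where no surplus power decay is available to absorb it) is excluded from the hypothesis, since there the same computation leaves a genuine, unabsorbed $\ln(1+\tau)$ factor.
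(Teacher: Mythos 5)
Your proof cannot be measured against one in the paper, because the paper offers none: Lemma~\ref{lem:IntegralLemma} is quoted from Chern and Liu and explicitly ``stated without proof.'' Your self-contained argument is therefore a genuinely different (and more informative) route. Its core is correct: splitting at $u=\tau/2$, pulling out $(1+\tau-u)^{-\gamma_1}\leq 2^{\gamma_1}(1+\tau)^{-\gamma_1}$ on the first half, substituting $v=\tau-u$ on the second half so that the roles of $\gamma_1$ and $\gamma_2$ swap, and checking the four sign patterns of $(\gamma_1-1,\gamma_2-1)$ does reproduce the exponent $\min\{\gamma_1+\gamma_2-1,\gamma_1,\gamma_2\}$ in every generic case. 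Your treatment of the borderline case $\gamma_1=1<\gamma_2$, absorbing the factor $\ln(1+\tau)$ into the surplus decay $(1+\tau)^{-(\gamma_2-1)}$, is also exactly right, and it correctly explains why configurations such as $\gamma_1=\gamma_2=1$ or $\gamma_1=1>\gamma_2$ must be excluded.

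The one part of the statement your argument does not deliver is the parenthetical claim that $C_{\gamma_1,\gamma_2}$ depends continuously on $(\gamma_1,\gamma_2)$ --- and in this paper that clause is not decorative. You claim continuity only ``in the interior of each region,'' but the admissible parameter set is not a disjoint union of open regions: it contains the segment $\{\gamma_1=1<\gamma_2\}$ together with a full two-sided neighbourhood of it, and your explicit constants $(1-\gamma_1)^{-1}$ and $(\gamma_1-1)^{-1}$ blow up as $\gamma_1\to1$ from either side, so the constant you construct is not even locally bounded along that segment. The paper relies on exactly this point: the Remark following Lemma~\ref{lem:IntegralLemmaExp} invokes continuity of $C_{\gamma_1,\gamma_2}$ to get bounds that are uniform over compact parameter ranges, and in Corollary~\ref{cor:FixedPtDecay} the lemma is applied with $\gamma_1=1-\frac{1}{p}$ for \emph{every} $p\in[1,\infty]$ simultaneously (so $\gamma_1$ sweeps through values arbitrarily close to, and equal to, $1$) while $\gamma_2>1$ stays fixed; without a locally uniform constant near $\gamma_1=1$ the claimed uniformity in $p$ of the decay rates collapses. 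The repair is cheap and in the same spirit as your log-absorption step: on the strip $|\gamma_1-1|\leq\epsilon$, $\gamma_2\geq1+2\epsilon$, estimate the second half by
\[
	2^{\gamma_2}(1+\tau)^{-\gamma_2}\int_0^{\tau/2}(1+v)^{-\gamma_1}\,\mathrm{d}v \leq 2^{\gamma_2}(1+\tau)^{-\gamma_2}\int_0^{\tau/2}(1+v)^{\epsilon-1}\,\mathrm{d}v \leq \frac{2^{\gamma_2}}{\epsilon}(1+\tau)^{\epsilon-\gamma_2} \leq \frac{2^{\gamma_2}}{\epsilon}(1+\tau)^{-\gamma_1},
\]
where the last step uses $\epsilon-\gamma_2\leq-(1+\epsilon)\leq-\gamma_1$; here the surplus decay $\gamma_2-1-\epsilon\geq\epsilon$ absorbs the loss uniformly on the strip, just as it absorbed the logarithm at $\gamma_1=1$. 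Patching this local bound together with your interior constants yields a constant that is locally bounded on the whole admissible set (which is all the paper's applications actually need); genuine continuity can then be arranged, for instance by passing to the optimal constant.
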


\begin{lem} \label{lem:IntegralLemmaExp} 
	If $\beta,\gamma$ are positive constants, there exists a constant $C_{\gamma,\beta} > 0$ (continuously depending on $\beta$ and $\gamma$) such that 
	\[
		\int_0^\tau(1 + \tau - u)^{-\gamma}e^{-\frac{\beta}{\alpha}u}\mathrm{d}u \leq \alpha C_{\gamma,\beta} (1 + \tau)^{-\gamma},
	\]
	for all $\tau \geq 0$ and $\alpha \in (0,1]$.
\end{lem}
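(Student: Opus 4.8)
The plan is to exploit the fact that, for small $\alpha$, the weight $e^{-\beta u/\alpha}$ is sharply concentrated near $u=0$, where $(1+\tau-u)^{-\gamma}$ is comparable to $(1+\tau)^{-\gamma}$, while the total mass $\int_0^\infty e^{-\beta u/\alpha}\,\mathrm{d}u=\alpha/\beta$ supplies the factor of $\alpha$. Concretely, I would split the integral at the midpoint $u=\tau/2$ (the case $\tau=0$ being trivial since the integral vanishes) and estimate the two pieces separately.

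On $[0,\tau/2]$ one has $\tau-u\ge\tau/2$, hence $1+\tau-u\ge 1+\tau/2\ge\tfrac12(1+\tau)$ and therefore $(1+\tau-u)^{-\gamma}\le 2^{\gamma}(1+\tau)^{-\gamma}$. Pulling this factor out of the integral and bounding $\int_0^{\tau/2}e^{-\beta u/\alpha}\,\mathrm{d}u\le\int_0^\infty e^{-\beta u/\alpha}\,\mathrm{d}u=\alpha/\beta$ gives a contribution of at most $(2^{\gamma}/\beta)\,\alpha\,(1+\tau)^{-\gamma}$, which is already of the required form.

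On $[\tau/2,\tau]$ I would use the crude bounds $(1+\tau-u)^{-\gamma}\le 1$ and $e^{-\beta u/\alpha}\le e^{-\beta\tau/(2\alpha)}$ to obtain a contribution of at most $\tfrac{\tau}{2}e^{-\beta\tau/(2\alpha)}$. The remaining task is to absorb this into $\alpha(1+\tau)^{-\gamma}$. Substituting $v=\tau/\alpha$ and using $\alpha\le 1$ to write $(1+\tau)^{\gamma}=(1+\alpha v)^{\gamma}\le(1+v)^{\gamma}$, it suffices to observe that
\[
\frac{1}{\alpha}\cdot\frac{\tau}{2}e^{-\beta\tau/(2\alpha)}(1+\tau)^{\gamma}\le\frac{v}{2}(1+v)^{\gamma}e^{-\beta v/2}\le\sup_{v\ge0}\frac{v}{2}(1+v)^{\gamma}e^{-\beta v/2}=:C''_{\gamma,\beta},
\]
the supremum being finite because the exponential dominates the polynomial growth. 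Thus this piece is at most $C''_{\gamma,\beta}\,\alpha\,(1+\tau)^{-\gamma}$.

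Adding the two estimates yields the claim with $C_{\gamma,\beta}=2^{\gamma}/\beta+C''_{\gamma,\beta}$. Continuity in $(\gamma,\beta)$ follows from continuity of $2^{\gamma}/\beta$ together with the fact that the parametrized maximum $C''_{\gamma,\beta}$ depends continuously on its parameters, since the maximizer remains in a bounded $v$-range locally uniformly in $(\gamma,\beta)$. The only point requiring care is the second piece, where the exponentially small factor $e^{-\beta\tau/(2\alpha)}$ must be traded against the algebraic weight $(1+\tau)^{-\gamma}$ and the prefactor $\alpha$; the rescaling $v=\tau/\alpha$ is exactly what converts this into a bounded one-variable supremum, and this is the step I expect to be the crux.
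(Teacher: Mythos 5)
Your proof is correct, and it takes a somewhat different route from the paper's. The paper first integrates by parts, which extracts the leading term $\tfrac{\alpha}{\beta}(1+\tau)^{-\gamma}$ explicitly and leaves a residual integral with the steeper weight $(1+\tau-u)^{-\gamma-1}$; that residual is then split at $u=\tau/2$ and estimated. You instead split the original integral at $u=\tau/2$ directly, with no integration by parts: the near piece is handled by $(1+\tau-u)^{-\gamma}\le 2^\gamma(1+\tau)^{-\gamma}$ together with the total mass $\alpha/\beta$ of the exponential, and the far piece by the rescaling $v=\tau/\alpha$, which converts the competition between $e^{-\beta\tau/(2\alpha)}$, the prefactor $\alpha^{-1}$, and the weight $(1+\tau)^{\gamma}$ into a bounded one-variable supremum. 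Your treatment of the far piece is arguably the cleaner of the two: the paper's proof, after integration by parts, discards the term $\tfrac{\alpha}{\beta}\bigl[e^{-\frac{\beta}{2\alpha}\tau}-e^{-\frac{\beta}{\alpha}\tau}\bigr]$ on the grounds that it is nonpositive, when in fact it is nonnegative; that term still obeys the required bound (by exactly the kind of exponential-versus-polynomial comparison you make), but your argument sidesteps the issue entirely. Your continuity remark for $C''_{\gamma,\beta}$ is also adequate, since the maximizer of $v\mapsto \tfrac{v}{2}(1+v)^{\gamma}e^{-\beta v/2}$ stays in a compact range locally uniformly in $(\gamma,\beta)$.
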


\begin{proof}
	We begin by integrating by parts to obtain
	\begin{equation}\label{IntBndExp}
		\int_0^\tau(1 + \tau - u)^{-\gamma}e^{-\frac{\beta}{\alpha}u}\mathrm{d}u = \frac{\alpha}{\beta}(1+\tau)^{-\gamma} -\frac{\alpha}{\beta}e^{-\frac{\beta}{\alpha}\tau} + \frac{\alpha \gamma}{\beta}\int_0^\tau(1 + \tau - u)^{-\gamma-1}e^{-\frac{\beta}{\alpha}u}\mathrm{d}u.  	
	\end{equation}
	Then, the rightmost integral can be bounded as
	\[
		\begin{split}
			\int_0^\tau(1 + \tau - u)^{-\gamma-1}e^{-\frac{\beta}{\alpha}u}\mathrm{d}u &\leq (1 + \frac{\tau}{2})^{-\gamma-1}\int_0^\frac{\tau}{2}e^{-\frac{\beta}{\alpha}u}\mathrm{d}u + e^{-\frac{\beta}{2\alpha}\tau}\int_\frac{\tau}{2}^\tau(1 + \tau - u)^{-\gamma-1}\mathrm{d}u \\
			&\leq \frac{\alpha}{\beta}(1 + \frac{\tau}{2})^{-\gamma-1}\underbrace{[1 - e^{-\frac{\beta}{2\alpha}\tau}]}_{\leq 1} + \frac{1}{\gamma}e^{-\frac{\beta}{2\alpha}\tau}\underbrace{[1 - (1+\frac{\tau}{2})^{-\gamma}]}_{\leq 1} \\
			&\leq \frac{\alpha}{\beta}(1 + \frac{\tau}{2})^{-\gamma-1} + \frac{1}{\gamma}e^{-\frac{\beta}{2\alpha}\tau}.	
		\end{split}
	\]
	Putting this bound back into (\ref{IntBndExp}) we arrive at
	\[
		\begin{split}
		\int_0^\tau(1 + \tau - u)^{-\gamma}e^{-\frac{\beta}{\alpha}u}\mathrm{d}u &\leq \frac{\alpha}{\beta}(1+\tau)^{-\gamma} -\frac{\alpha}{\beta}e^{-\frac{\beta}{\alpha}\tau} + \frac{\alpha^2 \gamma}{\beta^2}(1 + \frac{\tau}{2})^{-\gamma-1} + \frac{\alpha}{\beta}e^{-\frac{\beta}{2\alpha}\tau}	 \\
		&= \frac{\alpha}{\beta}(1+\tau)^{-\gamma} + \frac{\alpha^2 \gamma}{\beta^2}(1 + \frac{\tau}{2})^{-\gamma-1} + \frac{\alpha}{\beta}\underbrace{[e^{-\frac{\beta}{2\alpha}\tau} - e^{-\frac{\beta}{\alpha}\tau}]}_{\leq 0} \\
		&\leq \frac{\alpha}{\beta}(1+\tau)^{-\gamma} + \frac{\alpha^2 \gamma}{\beta^2}(1 + \frac{\tau}{2})^{-\gamma-1}. 
		\end{split}
	\]
	Therefore, when $0 < \alpha \leq 1$ we can find a $C > 0$, independent of $\alpha$ and $\tau$, so that 
	\[
		\frac{\alpha^2 \gamma}{\beta^2}(1 + \frac{\tau}{2})^{-\gamma-1} \leq \frac{\alpha \gamma}{\beta^2} C(1 + \tau)^{-\gamma}
	\]
	for all $\tau \geq 0$, thus completing the proof.
\end{proof} 

\begin{rmk}
	We note that the constant $C_{\gamma_1,\gamma_2}$ used in the statement of Lemma~\ref{lem:IntegralLemma} depends continuously on $\gamma_1,\gamma_2$. Hence, if Lemma~\ref{lem:IntegralLemma} is applied for a range of $\gamma_1,\gamma_2$ we are able to uniformly bound $C_{\gamma_1,\gamma_2}$ provided that the full range of $\gamma_1,\gamma_2$ belong to a compact set. This will be the case throughout the following sections. The same statement applies to the constant $C_{\gamma,\beta}$ from Lemma~\ref{lem:IntegralLemmaExp}.
\end{rmk}

\subsection{Semigroup Decay}\label{subsec:Linearization} 

Let us consider the linear operator $\tilde{L}_\alpha$, parametrized by $\alpha$, acting upon the sequences $x = \{x_{i,j}\}_{(i,j)\in\mathbb{Z}^2}$ by 
\begin{equation} \label{LinearPhase2}
	[\tilde{L}_\alpha x ]_{i,j} = \sum_{i',j'} \frac{\bar{r}_{i',j'}(\alpha)}{\bar{r}_{i,j}(\alpha)}\cos(\bar{\theta}_{i',j'}(\alpha) - \bar{\theta}_{i,j}(\alpha))(x_{i',j'} - x_{i,j}),
\end{equation}
for all $(i,j) \in \mathbb{Z}^2$. The following proposition details how the decay properties of the semigroup $e^{L_\alpha t}$, where $L_\alpha$ is defined in (\ref{LinearPhase}), can be extended to give decay properties of the semigroup $e^{\tilde{L}_\alpha t}$. 

\begin{prop} \label{prop:LinearPhaseDecay} 
	There exists a constant $\tilde{C}_L > 0$ such that for all $x_0 \in \ell^1$, $\alpha \in [0,\alpha^*]$, and $t \geq 0$ we have 
	\begin{equation}\label{LAlphaDecay2}
		\begin{split}
			&\|e^{\tilde{L}_\alpha t} x_0\|_p \leq \tilde{C}_L(1 + t)^{-1 + \frac{1}{p}}\|x_0\|_1, \\
			&Q_p(e^{\tilde{L}_\alpha t}x_0) \leq \tilde{C}_L(1 + t)^{-\min\{2 - \frac{1}{p},2-\frac{1}{q^*}\}}\|x_0\|_1,
		\end{split}
	\end{equation}
	where $e^{\tilde{L}_\alpha t}$ is the semi-group with infinitesimal generator given by $\tilde{L}_\alpha$.	
\end{prop}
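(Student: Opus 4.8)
The plan is to treat $\tilde L_\alpha$ as a bounded perturbation of $L_\alpha$ and push the decay of Hypothesis~\ref{hyp:LinearPhase} through a variation-of-constants (Duhamel) formula. I would write $\tilde L_\alpha = L_\alpha + M_\alpha$, where
\[
[M_\alpha x]_{i,j} = \sum_{i',j'}\Big(\tfrac{\bar r_{i',j'}(\alpha)}{\bar r_{i,j}(\alpha)} - 1\Big)\cos(\bar\theta_{i',j'}(\alpha) - \bar\theta_{i,j}(\alpha))(x_{i',j'} - x_{i,j}),
\]
so that $e^{\tilde L_\alpha t}x_0 = e^{L_\alpha t}x_0 + \int_0^t e^{L_\alpha(t-u)}M_\alpha e^{\tilde L_\alpha u}x_0\,\mathrm{d}u$. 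The single most important estimate is a bound on $M_\alpha$ \emph{into} $\ell^1$: writing $c_{i,j,i',j'} = \bar r_{i',j'}(\alpha)/\bar r_{i,j}(\alpha) - 1$ and applying H\"older's inequality with the conjugate exponents $p^*,q^*$ together with Hypothesis~\ref{hyp:pStar} gives
\[
\|M_\alpha y\|_1 \le \Big(\sum_{(i,j)\in\mathbb{Z}^2}\sum_{i',j'}|c_{i,j,i',j'}|^{p^*}\Big)^{1/p^*}Q_{q^*}(y) \le C_{\mathrm{sol}}^{1/p^*}Q_{q^*}(y).
\]
This is precisely where the H\"older conjugate $q^*$ enters, and it is the origin of the $2-\tfrac1{q^*}$ terms appearing in the final decay exponents.

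The crucial structural observation is that, since the source term $M_\alpha e^{\tilde L_\alpha u}x_0$ is controlled in $\ell^1$ purely by $Q_{q^*}(e^{\tilde L_\alpha u}x_0)$, the \emph{only} self-referential estimate is the one for $Q_{q^*}$; everything else is driven by it. I would therefore first establish $Q_{q^*}(e^{\tilde L_\alpha u}x_0)\le \tilde C_L(1+u)^{-(2-1/q^*)}\|x_0\|_1$ and then read off the remaining bounds. Granting that decay, applying $\|\cdot\|_p$ (resp.\ $Q_p$) to the Duhamel formula, feeding (\ref{LAlphaDecay}) into the homogeneous term and the $M_\alpha$ bound into the integral term, and invoking Lemma~\ref{lem:IntegralLemma} with $(\gamma_1,\gamma_2)=(1-\tfrac1p,\,2-\tfrac1{q^*})$ resp.\ $(2-\tfrac1p,\,2-\tfrac1{q^*})$ yields the two stated rates. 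One checks in both cases that $\gamma_1+\gamma_2-1\ge\gamma_1$, so the minimum of the three exponents in Lemma~\ref{lem:IntegralLemma} is $\gamma_1=1-\tfrac1p$ for the $\ell^p$ bound and $\min\{2-\tfrac1p,\,2-\tfrac1{q^*}\}$ for the $Q_p$ bound, while the homogeneous contributions $(1+t)^{-(1-1/p)}$ and $(1+t)^{-(2-1/p)}$ never dominate. The remark following Lemma~\ref{lem:IntegralLemma} keeps the constants uniform as $p$ ranges over a compact set, and the endpoint $p=\infty$ is recovered by interpolation exactly as in the proof of Lemma~\ref{lem:Hyp3}.

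The hard part is closing the self-referential $Q_{q^*}$ bound. Duhamel turns it into a convolution Volterra inequality of renewal type,
\[
\mathcal Q(t)\le C_L(1+t)^{-(2-\frac1{q^*})} + C_LC_{\mathrm{sol}}^{1/p^*}\int_0^t(1+t-u)^{-(2-\frac1{q^*})}\mathcal Q(u)\,\mathrm{d}u,
\]
whose kernel has finite $L^1$-mass (equal to $p^*$); the heavy-tailed forcing is inherited by the solution $\mathcal Q$ only in the \emph{subcritical} regime, i.e.\ when the effective constant $C_LC_{\mathrm{sol}}^{1/p^*}$ is small (roughly $C_LC_{\mathrm{sol}}^{1/p^*}p^*<1$), otherwise the solution can grow. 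Securing this smallness is the delicate point and the main obstacle, since Hypothesis~\ref{hyp:pStar} only furnishes a uniform bound $C_{\mathrm{sol}}$, not smallness. I would extract smallness from (\ref{RotWaveLip}) and $\bar r_{i,j}(\alpha)\ge a/2$, which give the pointwise estimate $|c_{i,j,i',j'}|\le \tfrac{4C_r}{a}\alpha$, so the coefficients of $M_\alpha$ are uniformly $\mathcal O(\alpha)$ and vanish at $\alpha=0$; combined with the uniform $\ell^{p^*}$-summability this forces $C_{\mathrm{sol}}(\alpha)^{1/p^*}\to0$ as $\alpha\to0$, so that after possibly shrinking the threshold $\alpha^*$ the Volterra inequality closes by a contraction/Gronwall argument (equivalently by summing a convergent Neumann series), producing $\tilde C_L$. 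I expect the reconciliation of the mere boundedness given by Hypothesis~\ref{hyp:pStar} with the smallness required for the subcritical renewal estimate to be the genuine technical hurdle; once $Q_{q^*}$ is in hand, the $\ell^p$ and $Q_p$ bounds for $p\ne q^*$ follow routinely from the paragraph above.
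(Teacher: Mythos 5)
Your high-level architecture coincides with the paper's: the splitting $\tilde{L}_\alpha = L_\alpha + (\tilde{L}_\alpha - L_\alpha)$, the H\"older/Hypothesis~\ref{hyp:pStar} estimate $\|(\tilde{L}_\alpha - L_\alpha)y\|_1 \leq C_\mathrm{sol}^{1/p^*}Q_{q^*}(y)$ (this is exactly Lemma~\ref{lem:AsymBnd}), the Duhamel formula (\ref{PhaseIntForm}), the observation that only the $Q_{q^*}$ estimate is self-referential, and the recovery of the remaining $\ell^p$ and $Q_p$ rates from Lemma~\ref{lem:IntegralLemma} all match Subsection~\ref{subsec:Linearization}, including the exponent bookkeeping.

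The genuine gap is in how you close the $Q_{q^*}$ inequality. You need the convolution kernel to be subcritical, and you claim to obtain this from the pointwise bound $|\bar{r}_{i',j'}(\alpha)/\bar{r}_{i,j}(\alpha)-1| \leq \tfrac{4C_r}{a}\alpha$ together with the uniform bound (\ref{pStar}), concluding that $C_\mathrm{sol}(\alpha)^{1/p^*}\to 0$ as $\alpha\to 0$. That inference is false: sup-norm smallness plus a uniform $\ell^{p^*}$ bound does not force the $\ell^{p^*}$ quantity to be small, because the mass can spread over more and more lattice sites as $\alpha\to 0$. For instance, coefficients of size $\alpha$ supported on roughly $\alpha^{-p^*}$ sites are compatible with both (\ref{RotWaveLip}) and (\ref{pStar}) and keep $\sum_{(i,j)}\sum_{i',j'}|\bar{r}_{i',j'}/\bar{r}_{i,j}-1|^{p^*}$ of order one for every $\alpha$; nothing in the hypotheses excludes this. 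Interpolation does yield smallness in $\ell^{p}$ for every $p>p^*$, but rerunning your H\"older step with such a $p$ replaces $q^*$ by its strictly smaller conjugate $q$, and you then prove only the weaker exponent $2-\tfrac{1}{q} < 2-\tfrac{1}{q^*}$, not the rates stated in (\ref{LAlphaDecay2}). Moreover, even granting smallness, your argument covers only a shrunken interval of $\alpha$, whereas Proposition~\ref{prop:LinearPhaseDecay} is asserted for all $\alpha\in[0,\alpha^*]$.

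For contrast, the paper's Lemma~\ref{lem:q^*Decay} closes the loop with no smallness at all: it applies Gronwall's inequality and uses that, since $2-\tfrac{1}{q^*} = 1+\tfrac{1}{p^*} > 1$, the quantity $\int_s^t (1+t-r)^{-2+\frac{1}{q^*}}\mathrm{d}r$ is bounded uniformly in $0\leq s\leq t$, so the Gronwall exponential is a fixed (possibly large) constant $C_\mathrm{exp}$, after which Lemma~\ref{lem:IntegralLemma} gives the decay uniformly in $\alpha\in[0,\alpha^*]$. I will note that your supercriticality worry does touch a real subtlety there --- the textbook generalized Gronwall inequality is designed for kernels nondecreasing in $t$, while $(1+t-s)^{-2+\frac{1}{q^*}}$ decreases in $t$, and supercritical renewal inequalities do admit exponentially growing solutions --- so the paper's one-line Gronwall application is more delicate than it appears. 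But as written, your proposal fails at the smallness step, and the interpolation patch does not reproduce the stated exponents.
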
 

We see that the $\ell^p$ norm decay of the semigroup $e^{\tilde{L}_\alpha}$ is asymptotically equivalent to those of the semigroup $e^{\tilde{L}_\alpha}$ guaranteed by Hypothesis~\ref{hyp:LinearPhase}, but the $Q_p$ semi-norm decay has a slight adjustment for $p \geq q^*$. In Lemma~\ref{lem:q^*Decay} we will see that this is a consequence of Hypothesis~\ref{hyp:pStar}, and moreover, if $p^* = 1$ the decay with respect to the $Q_p$ semi-norms are asymptotically equivalent to those of the semigroup $e^{\tilde{L}_\alpha}$. To emphasize where these differences in decay rates come from, the proof of Proposition~\ref{prop:LinearPhaseDecay} is broken down into the following series of results.

\begin{lem} \label{lem:AsymBnd} 
	There exists a constant $C_{q^*} > 0$ such that for all $x\in\ell^1$ and $\alpha \in [0,\alpha^*]$ we have
	\[
		\|(\tilde{L}_{\alpha} - L_\alpha) x\|_1 \leq C_{q^*}Q_{q^*}(x),
	\]
	where $q^*$ is the H\"older conjugate of $p^*$ from Hypothesis~\ref{hyp:pStar}.
\end{lem}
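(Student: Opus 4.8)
The plan is to compute the difference operator $\tilde{L}_\alpha - L_\alpha$ explicitly and then bound its $\ell^1$ norm by a single application of Hölder's inequality. Subtracting the definitions (\ref{LinearPhase}) and (\ref{LinearPhase2}) term-by-term, and observing that the cosine factors and increments $(x_{i',j'}-x_{i,j})$ are common to both operators, gives
\[
	[(\tilde{L}_\alpha - L_\alpha)x]_{i,j} = \sum_{i',j'}\bigg(\frac{\bar{r}_{i',j'}(\alpha)}{\bar{r}_{i,j}(\alpha)} - 1\bigg)\cos(\bar{\theta}_{i',j'}(\alpha) - \bar{\theta}_{i,j}(\alpha))(x_{i',j'} - x_{i,j}),
\]
for each $(i,j) \in \mathbb{Z}^2$.

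First I would take the $\ell^1$ norm, apply the triangle inequality to move the absolute value inside the finite nearest-neighbour sum, and use $|\cos(\cdot)| \leq 1$ to discard the cosine factors. This yields
\[
	\|(\tilde{L}_\alpha - L_\alpha)x\|_1 \leq \sum_{(i,j)\in\mathbb{Z}^2}\sum_{i',j'}\bigg|\frac{\bar{r}_{i',j'}(\alpha)}{\bar{r}_{i,j}(\alpha)} - 1\bigg|\,|x_{i',j'} - x_{i,j}|.
\]
The key step is to regard the right-hand side as a sum over the countable index set $E$ of ordered pairs $((i,j),(i',j'))$ with $(i',j')$ a nearest-neighbour of $(i,j)$, and to apply Hölder's inequality on $\ell^1(E)$ with the conjugate exponents $p^*$ and $q^*$ from Hypothesis~\ref{hyp:pStar}. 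This factorizes the sum as
\[
	\sum_E\bigg|\frac{\bar{r}_{i',j'}(\alpha)}{\bar{r}_{i,j}(\alpha)} - 1\bigg||x_{i',j'}-x_{i,j}| \leq \bigg(\sum_E\bigg|\frac{\bar{r}_{i',j'}(\alpha)}{\bar{r}_{i,j}(\alpha)} - 1\bigg|^{p^*}\bigg)^{\frac{1}{p^*}}\bigg(\sum_E |x_{i',j'}-x_{i,j}|^{q^*}\bigg)^{\frac{1}{q^*}}.
\]
The first factor is bounded by $C_\mathrm{sol}^{1/p^*}$ uniformly in $\alpha \in [0,\alpha^*]$ by Hypothesis~\ref{hyp:pStar}, while the second factor is precisely $Q_{q^*}(x)$ by the definition (\ref{Qp}). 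Setting $C_{q^*} := C_\mathrm{sol}^{1/p^*}$ then completes the argument.

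This lemma carries no substantial obstacle; the only point requiring care is the degenerate case $p^* = 1$, where $q^* = \infty$ and the Hölder step reads $\sum_E ab \leq (\sum_E a)\sup_E b$. Here I would simply note that $\sup_E|x_{i',j'}-x_{i,j}| \leq Q_\infty(x)$ (since each increment is dominated by the full nearest-neighbour sum), so the stated bound still holds with $C_{q^*} = C_\mathrm{sol}$. I would dispatch this edge case in a single line to keep the main chain of inequalities clean.
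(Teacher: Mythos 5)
Your proposal is correct and follows essentially the same route as the paper: explicit computation of $[(\tilde{L}_\alpha - L_\alpha)x]_{i,j}$, the triangle inequality with $|\cos(\cdot)|\leq 1$, H\"older's inequality with exponents $(p^*,q^*)$, and Hypothesis~\ref{hyp:pStar} to bound the radial factor by $C_\mathrm{sol}^{1/p^*}$. The only cosmetic difference is that you apply H\"older once over the edge set $E$ of nearest-neighbour pairs, where the paper applies it twice in nested fashion (inner sum, then outer sum), yielding the identical bound; your explicit treatment of the degenerate case $p^*=1$, $q^*=\infty$ is a small point of extra care the paper leaves implicit.
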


\begin{proof}
	Using the definitions of $\tilde{L}_{\alpha}$ and $L_{\alpha}$, we find that for any $x = \{x_{i,j}\}_{(i,j)\in\mathbb{Z}^2} \in \ell^1$ we have 
	\[
		[(\tilde{L}_{\alpha} - L_\alpha) x]_{i,j} = \sum_{i',j'} \bigg(\frac{\bar{r}_{i',j'}(\alpha)}{\bar{r}_{i,j}(\alpha)} - 1\bigg)\cos(\bar{\theta}_{i',j'}(\alpha) - \bar{\theta}_{i,j}(\alpha))(x_{i',j'} - x_{i,j}),	
	\]
	for all $(i,j)\in\mathbb{Z}^2$ and $\alpha \in [0,\alpha^*]$. Then, using the uniform boundedness of cosine we get 
	\[
		|[(\tilde{L}_{\alpha} - L_\alpha) x]_{i,j}| \leq \sum_{i',j'} \bigg|\frac{\bar{r}_{i',j'}(\alpha)}{\bar{r}_{i,j}(\alpha)} - 1\bigg||x_{i',j'} - x_{i,j}|. 	
	\] 
	Then, taking the sum over all $(i,j)$ we arrive at
	\[
		\begin{split}
			\|(\tilde{L}_{\alpha} - L_\alpha) x\|_1 &= \sum_{(i,j)\in\mathbb{Z}^2}|[(\tilde{L}_{\alpha} - L_\alpha) x]_{i,j}| \\
			&\leq \sum_{(i,j)\in\mathbb{Z}^2}\sum_{i',j'} \bigg|\frac{\bar{r}_{i',j'}(\alpha)}{\bar{r}_{i,j}(\alpha)} - 1\bigg||x_{i',j'} - x_{i,j}|, \\
			&\leq \sum_{(i,j)\in\mathbb{Z}^2} \bigg(\sum_{i',j'}\bigg|\frac{\bar{r}_{i',j'}(\alpha)}{\bar{r}_{i,j}(\alpha)} - 1\bigg|^{p^*}\bigg)^{\frac{1}{p^*}}\bigg(\sum_{i',j'}|x_{i',j'} - x_{i,j}|^{q^*}\bigg)^\frac{1}{q^*} \\
			&\leq \bigg(\sum_{(i,j)\in\mathbb{Z}^2}\sum_{i',j'}\bigg|\frac{\bar{r}_{i',j'}(\alpha)}{\bar{r}_{i,j}(\alpha)} - 1\bigg|^{p^*}\bigg)^\frac{1}{p^*}Q_{q^*}(x) \\
		\end{split}
	\]
	where we have applied H\"older's inequality twice, once for the nearest-neighbour summation and once for the full summation over the full set of indices. Finally, from Hypothesis~\ref{hyp:pStar} we have that   
	\[
		\bigg(\sum_{(i,j)\in\mathbb{Z}^2}\sum_{i',j'}\bigg|\frac{\bar{r}_{i',j'}(\alpha)}{\bar{r}_{i,j}(\alpha)} - 1\bigg|^{p^*}\bigg)^\frac{1}{p^*} \leq C_\mathrm{sol}^\frac{1}{p^*} < \infty,
	\]
	completing the proof.
\end{proof} 

Now, if $x(t)$ is a solution to the differential equation
\begin{equation}\label{LinearPhaseODE}
	\dot{x} = \tilde{L}_\alpha x
\end{equation}
with initial condition $x(0) = x_0 \in \ell^1$, then this solution $x(t)$ is such that $x(t) = e^{\tilde{L}_\alpha t}x_0$, and hence understanding the decay of solutions to (\ref{LinearPhaseODE}) leads to the proof of Proposition~\ref{prop:LinearPhaseDecay}. Trivially we have that
\[
	\dot{x}(t) = L_\alpha x(t) + (\tilde{L}_\alpha - L_\alpha)x(t),
\]
and using the variation of constants formula we obtain the equivalent integral form of the ordinary differential equation (\ref{LinearPhaseODE}), given as
\begin{equation}\label{PhaseIntForm}
	x(t) = e^{L_\alpha t}x_0 + \int_0^t e^{L_\alpha (t-s)}(\tilde{L}_\alpha - L_\alpha)x(s)ds,
\end{equation}
where we again recall that $e^{L_{\alpha}t}$ is the semigroup with infinitesimal generator $L_{\alpha}$ with the decay properties given in Hypothesis~\ref{hyp:LinearPhase}. We now use the integral form (\ref{PhaseIntForm}) to prove Proposition~\ref{prop:LinearPhaseDecay}.

\begin{lem} \label{lem:q^*Decay} 
	There exists a constant $C_Q > 0$ such that for every $x_0 \in \ell^1$ and $\alpha\in[0,\alpha^*]$, the solution $x(t) = e^{\tilde{L}_\alpha t}x_0$ to (\ref{LinearPhaseODE}) with initial condition $x(0) = x_0$ satisfies
	\[
		Q_{q^*}(x(t)) \leq C_Q(1 + t)^{-2 +\frac{1}{q^*}}\|x_0\|_1,
	\]
	for all $t \geq 0$.
\end{lem}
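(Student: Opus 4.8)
The plan is to run a self-consistent Volterra bootstrap on the quantity $h(t) := Q_{q^*}(x(t))$ directly from the variation-of-constants representation (\ref{PhaseIntForm}). First I would apply the semi-norm triangle inequality (Minkowski's inequality for the $\ell^{q^*}$-valued integral) to (\ref{PhaseIntForm}) to obtain
\[
	Q_{q^*}(x(t)) \le Q_{q^*}(e^{L_\alpha t}x_0) + \int_0^t Q_{q^*}\big(e^{L_\alpha(t-s)}(\tilde{L}_\alpha - L_\alpha)x(s)\big)\,\mathrm{d}s.
\]
The leading term is controlled by the $Q_{q^*}$ estimate of Hypothesis~\ref{hyp:LinearPhase} (taking $p=q^*$), giving $C_L(1+t)^{-2+\frac{1}{q^*}}\|x_0\|_1$, which already matches the claimed rate. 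For the integrand the key observation is that $(\tilde{L}_\alpha - L_\alpha)x(s)\in\ell^1$, so Hypothesis~\ref{hyp:LinearPhase} applies with initial datum $(\tilde{L}_\alpha - L_\alpha)x(s)$ and yields $Q_{q^*}\big(e^{L_\alpha(t-s)}(\tilde{L}_\alpha - L_\alpha)x(s)\big) \le C_L(1+t-s)^{-2+\frac{1}{q^*}}\|(\tilde{L}_\alpha - L_\alpha)x(s)\|_1$; feeding in Lemma~\ref{lem:AsymBnd} closes the loop in terms of $h$ itself, producing
\[
	h(t) \le A(1+t)^{-\gamma} + B\int_0^t (1+t-s)^{-\gamma}h(s)\,\mathrm{d}s, \qquad \gamma := 2 - \tfrac{1}{q^*},
\]
with $A = C_L\|x_0\|_1$ and $B = C_L C_{q^*}$. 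I note that the cheaper route of bounding $\|(\tilde L_\alpha-L_\alpha)x(s)\|_1$ via the slower $\ell^{q^*}$ decay of $x$ loses a full power of $(1+t)$, so the self-referential estimate really is needed to reach the fast rate.

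Next I would close this inequality by a weighted-supremum bootstrap. Setting $\Phi(t) := \sup_{0\le s\le t}(1+s)^{\gamma}h(s)$ — finite on compact intervals since $x(\cdot)=e^{\tilde{L}_\alpha\cdot}x_0$ is continuous into $\ell^1$ and $Q_{q^*}(x)\le Q_1(x)\le 8\|x\|_1$ by Lemma~\ref{lem:NormBnds} — I substitute $h(s)\le\Phi(t)(1+s)^{-\gamma}$ into the integral and invoke Lemma~\ref{lem:IntegralLemma} with $\gamma_1=\gamma_2=\gamma$. Since $p^*<\infty$ gives $q^*>1$ and hence $\gamma\in(1,2]$ (so in particular $\gamma\neq 1$, as the lemma requires), the produced exponent is $\min\{2\gamma-1,\gamma\}=\gamma$, and the convolution is bounded by $C_{\gamma,\gamma}(1+t)^{-\gamma}$. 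Multiplying by $(1+t)^{\gamma}$ and taking the supremum over $s\le t$ yields $\Phi(t)\le A + B C_{\gamma,\gamma}\,\Phi(t)$. Provided $B C_{\gamma,\gamma}=C_LC_{q^*}C_{\gamma,\gamma}<1$, this rearranges to the uniform bound $\Phi(t)\le A/(1-BC_{\gamma,\gamma})$, i.e. $h(t)\le C_Q(1+t)^{-\gamma}\|x_0\|_1$ with $C_Q=C_L/(1-BC_{\gamma,\gamma})$, which is exactly the claim.

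I expect the smallness condition $C_LC_{q^*}C_{\gamma,\gamma}<1$, rather than the reduction above, to be the main obstacle. It is genuinely necessary: the scalar renewal equation $h=a+k*h$ with integrable algebraic kernel $k(t)=B(1+t)^{-\gamma}$ develops exponentially growing solutions once $\|k\|_{L^1}=B/(\gamma-1)>1$, so no algebraic bound survives for large $B$. To secure the smallness I would revisit the proof of Lemma~\ref{lem:AsymBnd}, which in fact bounds $\|(\tilde{L}_\alpha-L_\alpha)x\|_1$ by $c(\alpha)Q_{q^*}(x)$ with $c(\alpha)=\big(\sum_{(i,j)}\sum_{i',j'}|\bar{r}_{i',j'}(\alpha)/\bar{r}_{i,j}(\alpha)-1|^{p^*}\big)^{1/p^*}$. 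Because $\bar{r}_{i,j}(0)=a$ for all $(i,j)$ by Hypothesis~\ref{hyp:PolarSoln}, this prefactor vanishes at $\alpha=0$, and using (\ref{RotWaveLip}) together with the uniform summability of Hypothesis~\ref{hyp:pStar} it is small for small $\alpha$; after shrinking $\alpha^*$ if necessary the effective kernel constant can be taken at most $\tfrac12$, and the bootstrap closes uniformly on the admissible range. I would also remark that for the synchronous and doubly-periodic applications the radial profile is constant, whence $\tilde{L}_\alpha=L_\alpha$, $B=0$, and the estimate is immediate; the delicate case is precisely the rotating wave, where $\alpha$ must be kept small.
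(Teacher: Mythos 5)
Your argument coincides with the paper's proof up to and including the convolution inequality: the same splitting of (\ref{PhaseIntForm}), the same application of Hypothesis~\ref{hyp:LinearPhase} to the $\ell^1$ datum $(\tilde{L}_\alpha - L_\alpha)x(s)$, and the same use of Lemma~\ref{lem:AsymBnd}, giving $h(t) \le A(1+t)^{-\gamma} + B\int_0^t(1+t-u)^{-\gamma}h(u)\,\mathrm{d}u$ with $\gamma = 2-\frac{1}{q^*}\in(1,2]$. The two proofs then part ways. The paper closes the loop by applying Gronwall's inequality, noting that the resulting exponential factor $e^{C_LC_{q^*}\int_s^t(1+t-r)^{-\gamma}\mathrm{d}r}$ is uniformly bounded precisely because $\gamma>1$, and then invoking Lemma~\ref{lem:IntegralLemma}; no smallness of $B=C_LC_{q^*}$ is ever imposed, and the resulting constant $C_Q$ is uniform over all of $[0,\alpha^*]$. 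Your weighted-supremum bootstrap is internally sound (the local finiteness of $\Phi$, the use of Lemma~\ref{lem:IntegralLemma} with $\gamma_1=\gamma_2=\gamma\neq 1$, and the rearrangement all check out), but it closes only under the condition $C_LC_{q^*}C_{\gamma,\gamma}<1$, and the way you propose to secure that condition is where your proof genuinely breaks down.

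The smallness does not follow from the paper's hypotheses. Hypothesis~\ref{hyp:pStar} is only a uniform-in-$\alpha$ bound on $S(\alpha):=\sum_{(i,j)}\sum_{i',j'}|\bar{r}_{i',j'}(\alpha)/\bar{r}_{i,j}(\alpha)-1|^{p^*}$, and (\ref{RotWaveLip}) only gives the pointwise bound $|\bar{r}_{i',j'}(\alpha)/\bar{r}_{i,j}(\alpha)-1|\le 4C_r\alpha/a$. Pointwise $O(\alpha)$ smallness together with a uniform bound on an infinite sum does not make the sum small: nothing in the hypotheses prevents the mass of $S(\alpha)$ from spreading over roughly $\alpha^{-p^*}$ lattice sites as $\alpha\to0$, so that $S(\alpha)$ stays at the level $C_\mathrm{sol}$ for every $\alpha>0$ even though each individual summand vanishes and $S(0)=0$; there is no $\alpha$-independent summable envelope, so no dominated-convergence argument is available. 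The only interpolation the hypotheses do permit, $\sum\sum|\cdot|^{p^*+\epsilon}\le(4C_r\alpha/a)^{\epsilon}C_\mathrm{sol}$, buys smallness only at the larger exponent $p^*+\epsilon$, whose H\"older conjugate is strictly smaller than $q^*$, so running your bootstrap there yields a strictly slower decay rate than the $-2+\frac{1}{q^*}$ asserted in the lemma. Consequently your proposal proves the statement only under an extra smallness assumption (or in the degenerate case of a constant radial profile, where $\tilde{L}_\alpha=L_\alpha$ and $B=0$), and even then only after shrinking the $\alpha$-interval, whereas the lemma demands a single $C_Q$ for every $\alpha\in[0,\alpha^*]$. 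Your renewal-equation remark is, however, a substantive one: the scalar inequality $h\le a + k*h$ with $\|k\|_{L^1}>1$ does admit exponentially growing solutions, and the paper's kernel $(1+t-s)^{-\gamma}$ is nonincreasing in $t$, which is the wrong monotonicity for the standard two-variable Gronwall lemma, so your critique raises a legitimate question about the paper's own closing step. But that observation does not repair your argument: the burden of producing the smallness falls on your proof, and the stated hypotheses do not supply it.
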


\begin{proof}
	Through straightforward manipulations of the integral form (\ref{PhaseIntForm}) one finds that 
	\[
		Q_{q^*}(x(t)) \leq Q_{q^*}(e^{L_\alpha t}x_0) + \int_0^t Q_{q^*}(e^{L_\alpha (t-s)}(\tilde{L}_\alpha - L_\alpha)x(s))ds.
	\]
	Then, using Hypothesis~\ref{hyp:LinearPhase} we obtain
	\[
		\begin{split}
			Q_{q^*}(x(t)) &\leq C_L(1 + t)^{-2 +\frac{1}{q^*}}\|x_0\|_1 + C_L\int_0^t (1 + t - s)^{-2 +\frac{1}{q^*}}\|(\tilde{L}_\alpha - L_\alpha)x(s)\|_1ds \\
			&\leq C_L(1 + t)^{-2 +\frac{1}{q^*}}\|x_0\|_1 + C_LC_{q^*}\int_0^t (1 + t -s)^{-2 +\frac{1}{q^*}}Q_{q^*}(x(s))ds,  
		\end{split}		
	\]
	where we have applied the results of Lemma~\ref{lem:AsymBnd}. We now apply Gronwall's inequality to see that
	\[
		\begin{split}
		Q_{q^*}(x(t)) \leq &C_L(1 + t)^{-2 +\frac{1}{q^*}}\|x_0\|_1 \\ 
		&+ C_L^2C_{q^*}\|x_0\|_1\int_0^t (1 + t -s)^{-2 +\frac{1}{q^*}}(1 + s)^{-2 + \frac{1}{q^*}}e^{C_LC_{q^*}\int_s^t(1 + t -r)^{-2 +\frac{1}{q^*}}dr}ds. 	
		\end{split}
	\]
	
	Now, from Hypothesis~\ref{hyp:pStar} we have $1 \geq 1 + \frac{1}{p^*} = 2 - \frac{1}{q^*}$, which in turn implies that $-2 +\frac{1}{q^*} < -1$. Hence, there exists a uniform upper bound $C_\mathrm{exp} > 0$ so that for all $t \geq s \geq 0$ we have 
	\[
		e^{C_LC_{q^*}\int_s^t(1 + t -r)^{-2 +\frac{1}{q^*}}dr} \leq C_\mathrm{exp}. 
	\] 
	Then, combining this bound with the result of Lemma~\ref{lem:IntegralLemma} we find that
	\[
		\int_0^t (1 + t -s)^{-2 + \frac{1}{q^*}}(1 + s)^{-2 + \frac{1}{q^*}}e^{C_LC_{q^*}\int_s^t(1 + t -r)^{-2 + \frac{1}{q^*}}dr}ds \leq C_\mathrm{exp}C_{2 - \frac{1}{q^*},2 - \frac{1}{q^*}}(1 + t)^{-2 + \frac{1}{q^*}}. 	
	\]
	Putting this all together therefore gives
	\[
		Q_{q^*}(x(t)) \leq C_L(1 + C_\mathrm{exp}C_L^2C_{q^*}C_{2 - \frac{1}{q^*},2 - \frac{1}{q^*}})(1 + t)^{-2 + \frac{1}{q^*}}\|x_0\|_1,	
	\]
	This completes the proof.
\end{proof} 

The results of Proposition~\ref{prop:LinearPhaseDecay} now follow from the results of Lemma~\ref{lem:q^*Decay}. The reason for this is that from (\ref{PhaseIntForm}) we can obtain 
\[
	\|x(t)\|_p \leq \|(e^{L_\alpha t}x_0\|_p + \int_0^t \|e^{L_\alpha (t-s)}(\tilde{L}_\alpha - L_\alpha)x(s)\|_p ds.
\]
Proceeding as in the proof of Lemma~\ref{lem:q^*Decay} will give the bound
\[
	\|x(t)\|_p \leq C_L(1+t)^{-1 + \frac{1}{p}}\|x_0\|_1 + C_L^2C_{q^*}C_\mathrm{exp}\|x_0\|_1\int_0^t(1 + t -s)^{-1 + \frac{1}{p}}(1 + s)^{-2 + \frac{1}{q^*}} ds, 	
\]
which we can obtain the appropriate bound by applying Lemma~\ref{lem:IntegralLemma}. Similar manipulations can be done for the $Q_p$ semi-norms, leading to the proof of Proposition~\ref{prop:LinearPhaseDecay}.

\subsection{Nonlinear Estimates}\label{subsec:Estimates} 

Here we provide Lipschitz estimates on the function $G(s,\psi,\alpha)$ in terms of $(s,\psi)\in\ell^1\times\ell^1$, uniformly in $\alpha \in [0,\alpha^*]$. Recall that $G(0,0,\alpha) = 0$ for all $\alpha \in [0,\alpha^*]$ and $\omega_1$ is assumed to be identically zero. We introduce the functions $G^{(1)}$ and $G^{(2)}$ given by 
\begin{equation}\label{GSubfunctions}
	\begin{split}
		G^{(1)}_{i,j}(s,\psi,\alpha) &= \sum_{i',j'}\bigg(\frac{\bar{r}_{i',j'}(\alpha) + s_{i',j'}}{\bar{r}_{i,j}(\alpha) + s_{i,j}} - \frac{\bar{r}_{i',j'}(\alpha)}{\bar{r}_{i,j}(\alpha)}\bigg)\sin(\bar{\theta}_{i',j'}(\alpha) + \psi_{i',j'} - \bar{\theta}_{i,j}(\alpha) - \psi_{i,j}), \\
		G^{(2)}_{i,j}(\psi,\alpha) &= \sum_{i',j'} \bigg[\frac{\bar{r}_{i',j'}(\alpha)}{\bar{r}_{i,j}(\alpha)}\bigg(\sin(\bar{\theta}_{i',j'}(\alpha) + \psi_{i',j'} - \bar{\theta}_{i,j}(\alpha)-\psi_{i,j}) - \sin(\bar{\theta}_{i',j'}(\alpha)- \bar{\theta}_{i,j}(\alpha)) \\
		&\quad -\cos(\bar{\theta}_{i',j'}(\alpha)- \bar{\theta}_{i,j}(\alpha))(\psi_{i',j'} - \psi_{i,j}) \bigg)\bigg], 
	\end{split}
\end{equation}
so that 
\[
	G(s,\psi,\alpha) = G(s,\psi,\alpha) - G(0,0,\alpha) = \tilde{L}_{\alpha}\psi + G^{(1)}_{i,j}(s,\psi,\alpha) + G^{(2)}_{i,j}(\psi,\alpha) 
\]
We now obtain Lipschitz estimates on the functions $G^{(1)}$ and $G^{(2)}$.

\begin{lem}\label{lem:G1} 
	There exists a constant $C_{G,1} > 0$ such that for all $\psi \in \ell^1$, $\alpha \in [0,\alpha^*]$, and $s\in\ell^1$ with $\|s\|_\infty \leq \frac{3a}{8}$ we have 
	\[
		\|G^{(1)}(s,\psi,\alpha)\|_1 \leq C_{G,1}(\|s\|_{q^*} + \|s\|_2 Q_2(\psi)).
	\]
\end{lem}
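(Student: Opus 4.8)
The plan is to reduce everything to Hölder's inequality and Cauchy--Schwarz after an algebraic rearrangement that isolates a single factor of $s$ and then exploits the steady-state identity from Hypothesis~\ref{hyp:PolarSoln}. Throughout write $\bar r = \bar r_{i,j}(\alpha)$, $\bar r' = \bar r_{i',j'}(\alpha)$, $s=s_{i,j}$, $s'=s_{i',j'}$, and abbreviate the phase arguments by $\Theta = \bar\theta_{i',j'}(\alpha)-\bar\theta_{i,j}(\alpha)$ and $\Psi = \psi_{i',j'}-\psi_{i,j}$. Since $\bar r_{i,j}(\alpha)\ge \tfrac a2$ and $\|s\|_\infty\le\tfrac{3a}{8}$, we have $\bar r_{i,j}(\alpha)+s_{i,j}\ge\tfrac a8$, so every denominator below is bounded and $|\bar r+s|^{-1}\le \tfrac8a$ uniformly. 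First I would use the elementary identity
\[
\frac{\bar r'+s'}{\bar r+s}-\frac{\bar r'}{\bar r} \;=\; \frac{s'}{\bar r+s}\;-\;\frac{\bar r'}{\bar r}\cdot\frac{s}{\bar r+s},
\]
which splits $G^{(1)}_{i,j}$ into a term $\mathrm{(I)}=\frac{1}{\bar r+s}\sum_{i',j'}s'\sin(\Theta+\Psi)$ and a term $\mathrm{(II)}=\frac{s}{\bar r+s}\sum_{i',j'}\frac{\bar r'}{\bar r}\sin(\Theta+\Psi)$. The point is that each piece now carries exactly one factor of $s$, and $\mathrm{(II)}$ is tailored to the steady state.

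For $\mathrm{(II)}$ I would invoke the phase steady-state equation (with $\omega_1\equiv 0$), $\sum_{i',j'}\frac{\bar r'}{\bar r}\sin\Theta=0$, which lets me replace $\sin(\Theta+\Psi)$ by $\sin(\Theta+\Psi)-\sin\Theta$ in the neighbour sum for free. The Lipschitz bound $|\sin(\Theta+\Psi)-\sin\Theta|\le|\Psi|$ and $\bar r'/\bar r\le 3$ then give $|\mathrm{(II)}|\le \tfrac{24}{a}|s_{i,j}|\sum_{i',j'}|\psi_{i',j'}-\psi_{i,j}|$; summing over $(i,j)$, using Cauchy--Schwarz and then Lemma~\ref{lem:AltGradient} produces a bound of the form $C\|s\|_2\,Q_2(\psi)$. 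For $\mathrm{(I)}$ I split once more, $\sin(\Theta+\Psi)=\sin\Theta+[\sin(\Theta+\Psi)-\sin\Theta]$. The bracketed contribution is handled as in $\mathrm{(II)}$: the sine Lipschitz bound yields $\sum_{i',j'}|s'||\psi_{i',j'}-\psi_{i,j}|$, and a single Cauchy--Schwarz over the edge set gives again $C\|s\|_2\,Q_2(\psi)$.

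What remains, and this is the crux, is the ``frozen-phase'' term $\frac{1}{\bar r+s}\sum_{i',j'}s'\sin\Theta$, which carries no factor of $\psi$ and is a gradient-type quantity in $s$ weighted by $\sin\Theta$. The naive bound $|\sin\Theta|\le1$ only returns $\|s\|_1$, which is too weak, since $q^*\ge 1$ forces $\|s\|_{q^*}\le\|s\|_1$. To recover the sharper $\|s\|_{q^*}$ I would instead apply Hölder with conjugate exponents $q^*$ and $p^*$, both over the four nearest neighbours and over the full index set, obtaining
\[
\sum_{(i,j)}\frac{1}{|\bar r+s|}\Big|\sum_{i',j'}s'\sin\Theta\Big|
\;\le\;\frac{8}{a}\,4^{1/q^*}\,\|s\|_{q^*}\Big(\sum_{(i,j)}\sum_{i',j'}|\sin\Theta|^{p^*}\Big)^{1/p^*}.
\]
The main obstacle is then to show the coefficient factor $S_\Theta:=\big(\sum_{(i,j)}\sum_{i',j'}|\sin(\bar\theta_{i',j'}(\alpha)-\bar\theta_{i,j}(\alpha))|^{p^*}\big)^{1/p^*}$ is finite and uniformly bounded in $\alpha\in[0,\alpha^*]$; this is the phase-side counterpart of the radial localization in Hypothesis~\ref{hyp:pStar}, and it is exactly where the exponent $q^*$ enters the statement. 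I expect to derive it from the radial steady-state equation, which after Taylor expanding $\lambda$ about $a$ identifies $\sum_{i',j'}(1-\cos\Theta)$ with a multiple of $|\bar r_{i,j}(\alpha)-a|/\alpha$; comparing the $\ell^{p^*}$ and $\ell^2$ norms over the four neighbours then bounds $S_\Theta$ by a power of the constant $C_\mathrm{sol}$ from Hypothesis~\ref{hyp:pStar}. Collecting $\mathrm{(I)}$, $\mathrm{(II)}$ and this term, and absorbing all geometric constants, $C_\mathrm{sol}^{1/p^*}$ and $S_\Theta$ into a single $C_{G,1}$, yields the claimed estimate; the delicate accounting lies entirely in this frozen-phase piece, while the rest is routine triangle-inequality and Cauchy--Schwarz bookkeeping.
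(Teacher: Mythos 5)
Your algebraic setup is sound: the identity $\frac{\bar r'+s'}{\bar r+s}-\frac{\bar r'}{\bar r}=\frac{s'}{\bar r+s}-\frac{\bar r'}{\bar r}\cdot\frac{s}{\bar r+s}$ is correct, and your treatment of $\mathrm{(II)}$ and of the $[\sin(\Theta+\Psi)-\sin\Theta]$ part of $\mathrm{(I)}$ (steady-state identity, sine Lipschitz bound, Cauchy--Schwarz, Lemma~\ref{lem:AltGradient}) correctly produces the $\|s\|_2Q_2(\psi)$ contribution, in parallel with the first term of the paper's bound (\ref{G1Bnd1}). The gap is in the frozen-phase term, exactly where you place the crux. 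Your H\"older step needs $S_\Theta=\big(\sum_{(i,j)}\sum_{i',j'}|\sin(\bar\theta_{i',j'}(\alpha)-\bar\theta_{i,j}(\alpha))|^{p^*}\big)^{1/p^*}<\infty$, and this is neither assumed in the paper nor derivable from Hypothesis~\ref{hyp:pStar} and the radial steady-state equation in the way you propose. The obstruction is a scaling mismatch: the radial equation identifies $\sum_{i',j'}(1-\cos\Theta)$, i.e.\ essentially $\frac12\sum_{i',j'}\Theta^2$, with $O(|\bar r_{i,j}(\alpha)-a|/\alpha)+O(\sum_{i',j'}|\bar r_{i',j'}(\alpha)/\bar r_{i,j}(\alpha)-1|)$, so the radial data only controls \emph{squares} of phase differences; at best this bounds $\sum\sum|\sin\Theta|^{2p^*}$, never $\sum\sum|\sin\Theta|^{p^*}$. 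Concretely, for the rotating wave the paper's own heuristics give $|\sin\Theta|\sim 1/n$ on the ring of $\sim 8n$ sites but $|\bar r_{i',j'}/\bar r_{i,j}-1|\sim\alpha/n^2$, so Hypothesis~\ref{hyp:pStar} holds for every $p^*>1$ while $S_\Theta^{p^*}\sim\sum_n n^{1-p^*}$ diverges for every $p^*\le 2$: in the entire regime $p^*\in(1,2]$ permitted by the hypotheses your coefficient is infinite and the estimate is vacuous.

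Moreover, the difficulty cannot be repaired downstream of your triangle inequality $|\sum_{i',j'}s'\sin\Theta|\le\sum_{i',j'}|s'||\sin\Theta|$: that step discards the steady-state cancellation, and the resulting quantity $\sum_{(i,j)}\sum_{i',j'}|s'||\sin\Theta|$ genuinely fails to be $O(\|s\|_{q^*})$ when $q^*>2$ (take $|s_{i,j}|=\varepsilon$ on the first $N$ rings of the rotating wave: the left-hand side grows like $\varepsilon N$ while $\|s\|_{q^*}\sim\varepsilon N^{2/q^*}$), so no alternative bound on that same quantity can close the argument. The paper's proof is organized so that a bare $\sin\Theta$ never appears: in (\ref{G1Bnd1})--(\ref{G1Bnd3}) every occurrence of $|\sin\Theta|$ is multiplied by the radial weight $|\bar r_{i',j'}(\alpha)/\bar r_{i,j}(\alpha)-1|$, the sine is simply bounded by $1$, and H\"older with exponents $(p^*,q^*)$ is applied to that radial factor---precisely the quantity Hypothesis~\ref{hyp:pStar} makes summable---against $(|s_{i,j}|+|s_{i',j'}|)\in\ell^{q^*}$; the localization hypothesis is thus used only on radial ratios, never on phases. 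In fairness, you have put your finger on a step the paper treats very casually: the ``equality'' in (\ref{G1Bnd1}) that attaches this weight is not literally justified, since $\frac{\bar r'+s'}{\bar r+s}-\frac{\bar r'}{\bar r}$ also contains the unweighted piece $\frac{s'-s}{\bar r+s}$. But your proposed substitute---phase-side localization---is not a consequence of the stated hypotheses and fails in the motivating application, so the proposal as written does not prove the lemma.
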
 

\begin{proof}
	We remark that the restriction $\|s\|_\infty \leq \frac{3a}{8}$ guarantees that the terms $\bar{r}_{i,j}(\alpha) + s_{i,j}$ are uniformly bounded in absolute value away from zero. Hence, there exists a Lipschitz constant $K_1 > 0$ such that 
	\[
		\bigg|\frac{\bar{r}_{i',j'}(\alpha) + s_{i',j'}}{\bar{r}_{i,j}(\alpha) + s_{i,j}} - \frac{\bar{r}_{i',j'}(\alpha)}{\bar{r}_{i,j}(\alpha)}\bigg| \leq K_1(|s_{i,j}| + |s_{i',j'}|),		
	\]   	
	for all $\alpha \in [0,\alpha^*]$ and $(i,j)\in\mathbb{Z}^2$. Similarly, we get the following estimate 
	\[
		\begin{split}
			|\sin(\bar{\theta}_{i',j'}(\alpha)& + \psi_{i',j'} - \bar{\theta}_{i,j}(\alpha) - \psi_{i,j})| \\ 
			&\leq |\sin(\bar{\theta}_{i',j'}(\alpha) + \psi_{i',j'} - \bar{\theta}_{i,j}(\alpha) - \psi_{i,j}) - \sin(\bar{\theta}_{i',j'}(\alpha)- \bar{\theta}_{i,j}(\alpha)| + |\sin(\bar{\theta}_{i',j'}(\alpha) - \bar{\theta}_{i,j}(\alpha))| \\
			&\leq |\psi_{i',j'} - \psi_{i,j}| + |\sin(\bar{\theta}_{i',j'}(\alpha) - \bar{\theta}_{i,j}(\alpha))| 
		\end{split}
	\]
	for all $(i,j)$ since sine is globally Lipschitz with Lipschitz constant one.	
	
	Putting this all together gives
	\begin{equation}\label{G1Bnd1}
		\begin{split}
			\|G^{(1)}(s,\psi,\alpha)\|_1 &\leq \sum_{(i,j)\in\mathbb{Z}^2}\sum_{i',j'} \bigg|\frac{\bar{r}_{i',j'}(\alpha) + s_{i',j'}}{\bar{r}_{i,j}(\alpha) + s_{i,j}}- \frac{\bar{r}_{i',j'}(\alpha)}{\bar{r}_{i,j}(\alpha)} \bigg||\sin(\bar{\theta}_{i',j'}(\alpha) + \psi_{i',j'} - \bar{\theta}_{i,j}(\alpha) - \psi_{i,j})| \\
			&\leq K_1\sum_{(i,j)\in\mathbb{Z}^2}\sum_{i',j'}(|s_{i,j}| + |s_{i',j'}|)|\psi_{i',j'} - \psi_{i,j}| \\ 
			&+ \bigg|\frac{\bar{r}_{i',j'}(\alpha) + s_{i',j'}}{\bar{r}_{i,j}(\alpha) + s_{i,j}}- \frac{\bar{r}_{i',j'}(\alpha)}{\bar{r}_{i,j}(\alpha)} \bigg||\sin(\bar{\theta}_{i',j'}(\alpha) - \bar{\theta}_{i,j}(\alpha))| \\
			&=  K_1\sum_{(i,j)\in\mathbb{Z}^2}\sum_{i',j'}(|s_{i,j}| + |s_{i',j'}|)(|\psi_{i',j'} - \psi_{i,j}|) \\ 
			&\quad+  \sum_{(i,j)\in\mathbb{Z}^2}\sum_{i',j'}\bigg|\frac{\bar{r}_{i',j'}(\alpha)}{\bar{r}_{i,j}(\alpha)} - 1\bigg|(|s_{i,j}| + |s_{i',j'}|)|\sin(\bar{\theta}_{i',j'}(\alpha) - \bar{\theta}_{i,j}(\alpha))|, 
		\end{split}
	\end{equation}
	uniformly in $\alpha \in [0,\alpha^*]$. Now, through a simple application of H\"older's inequality one obtains 
	\begin{equation}\label{G1Bnd2}
		\sum_{(i,j)\in\mathbb{Z}^2}\sum_{i',j'}(|s_{i,j}| + |s_{i',j'}|)(|\psi_{i',j'} - \psi_{i,j}|) \leq 8\|s\|_2 Q_2(\psi), 
	\end{equation}
	since each element has exactly four nearest-neighbours. Then, since sine is uniformly bounded we may apply H\"older's inequality and Hypothesis~\ref{hyp:pStar} to obtain the bound 
	\begin{equation}\label{G1Bnd3}
		\begin{split}
			\sum_{(i,j)\in\mathbb{Z}^2}\sum_{i',j'}\bigg|\frac{\bar{r}_{i',j'}(\alpha)}{\bar{r}_{i,j}(\alpha)} - 1\bigg|(|s_{i,j}|& + |s_{i',j'}|)|\sin(\bar{\theta}_{i',j'}(\alpha) - \bar{\theta}_{i,j}(\alpha))| \leq 8C_\mathrm{sol}^{p^*}\|s\|_{q^*},
		\end{split}
	\end{equation}
	where we have again used the fact that each element has exactly four nearest-neighbours. Therefore, the bounds (\ref{G1Bnd2}) and (\ref{G1Bnd3}) together with (\ref{G1Bnd1}) give the bound stated in the lemma.  
\end{proof} 

\begin{lem}\label{lem:G2} 
	There exists a constant $C_{G,2} > 0$ such that for all $\psi \in \ell^1$ and $\alpha \in [0,\alpha^*]$ we have 
	\[
		\|G^{(2)}(\psi,\alpha)\|_1 \leq C_{G,2}Q_2^2(\psi).
	\]
\end{lem}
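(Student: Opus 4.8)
The plan is to recognize the summand defining $G^{(2)}_{i,j}$ as the second-order Taylor remainder of the sine function, and to exploit the fact that sine has a globally bounded second derivative to obtain a \emph{uniform} quadratic pointwise estimate. Writing $\phi := \bar{\theta}_{i',j'}(\alpha) - \bar{\theta}_{i,j}(\alpha)$ and $d := \psi_{i',j'} - \psi_{i,j}$ for brevity, the bracketed term in (\ref{GSubfunctions}) is, up to the radial prefactor, exactly
\[
h(d) := \sin(\phi + d) - \sin(\phi) - \cos(\phi)\, d,
\]
which is the first-order Taylor expansion of $x \mapsto \sin(\phi + x)$ about $x = 0$ together with its remainder. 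The goal is to establish $|h(d)| \le \tfrac12 d^2$ for \emph{all} $d \in \mathbb{R}$, and then to assemble these pointwise estimates across the lattice.

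First I would verify the remainder bound. Setting $f(x) = \sin(\phi + x) - \sin(\phi) - \cos(\phi) x$, one has $f(0) = 0$, $f'(0) = \cos(\phi) - \cos(\phi) = 0$, and $f''(x) = -\sin(\phi + x)$, so $|f''(x)| \le 1$ for every $x$. Taylor's theorem with the Lagrange form of the remainder then yields $|h(d)| = |f(d)| \le \tfrac12 d^2$, uniformly in $\phi$ and in $d$. Next I would control the radial prefactor: the remark following Hypothesis~\ref{hyp:PolarSoln} restricts $\alpha^*$ so that $|\bar{r}_{i,j}(\alpha) - a| \le \tfrac{a}{2}$, whence $\tfrac{a}{2} \le \bar{r}_{i,j}(\alpha) \le \tfrac{3a}{2}$ and consequently
\[
\frac{\bar{r}_{i',j'}(\alpha)}{\bar{r}_{i,j}(\alpha)} \le 3,
\]
uniformly in $(i,j)$, $(i',j')$, and $\alpha \in [0,\alpha^*]$.

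Combining these two observations gives the pointwise estimate
\[
|G^{(2)}_{i,j}(\psi,\alpha)| \le 3 \sum_{i',j'} |h(\psi_{i',j'} - \psi_{i,j})| \le \frac{3}{2}\sum_{i',j'} |\psi_{i',j'} - \psi_{i,j}|^2,
\]
and summing over all $(i,j) \in \mathbb{Z}^2$ produces precisely $\tfrac{3}{2} Q_2^2(\psi)$ by the definition (\ref{Qp}) of the semi-norm, so the claim holds with $C_{G,2} = \tfrac{3}{2}$.

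The step I regard as the conceptual crux — rather than a computational obstacle — is recognizing that the remainder estimate is \emph{global} in $\psi$ rather than merely valid for small phase differences. This is precisely the payoff of the way $G^{(2)}$ has been defined so as to strip off both the constant term $\sin(\phi)$ and the linearization $\cos(\phi)(\psi_{i',j'} - \psi_{i,j})$: what remains is governed entirely by the uniformly bounded second derivative of sine, so no smallness hypothesis on $\psi$ or on $Q_2(\psi)$ is required and the quadratic bound persists for arbitrarily large phase differences. This stands in contrast to the estimate for $G^{(1)}$ in Lemma~\ref{lem:G1}, where Lipschitz control of the radial ratio forced the appearance of $\|s\|_{q^*}$ together with the Hypothesis~\ref{hyp:pStar} constant, rather than a clean quadratic dependence on the phase.
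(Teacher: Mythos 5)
Your proof is correct and follows the same route as the paper: bound the radial ratio by $3$ using $\tfrac{a}{2} \le \bar{r}_{i,j}(\alpha) \le \tfrac{3a}{2}$, then control the bracketed term by the second-order Taylor remainder of sine, whose second derivative is uniformly bounded by $1$. The only difference is that the paper defers the Taylor estimate to a citation of \cite[Lemma~5.1]{Bramburger3}, whereas you carry it out explicitly and obtain the concrete constant $C_{G,2} = \tfrac{3}{2}$.
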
 

\begin{proof}
	Begin by recalling that $\frac{a}{2} \leq \bar{r}_{i,j}(\alpha) \leq \frac{3a}{2}$ for all $(i,j)\in\mathbb{Z}^2$. This implies that 
	\[
		\frac{\bar{r}_{i',j'}(\alpha)}{\bar{r}_{i,j}(\alpha)} \leq 3,	
	\]
	and hence 
	\[
		\begin{split}
		\|G^{(2)}(\psi,\alpha)\|_1 \leq 3\sum_{(i,j)\in\mathbb{Z}^2}\sum_{i',j'} &|\sin(\bar{\theta}_{i',j'}(\alpha) + \psi_{i',j'} - \bar{\theta}_{i,j}(\alpha)-\psi_{i,j}) - \sin(\bar{\theta}_{i',j'}(\alpha)- \bar{\theta}_{i,j}(\alpha))\\  
		& -\cos(\bar{\theta}_{i',j'}(\alpha)- \bar{\theta}_{i,j}(\alpha))(\psi_{i',j'} - \psi_{i,j}) |.
		\end{split}
	\]
	The bound stated in the lemma now follows through a simple application of Taylor's theorem as in \cite[Lemma~5.1]{Bramburger3}.
\end{proof} 

We state the following result as a corollary of the previous three lemmas for the convenience of citing these results in the following section. 

\begin{cor}\label{cor:GBnd} 
	There exists a constant $C_{G} > 0$ such that for all $\psi \in \ell^1$, $\alpha \in [0,\alpha^*]$, and $s\in\ell^1$ with $\|s\|_\infty \leq \frac{3a}{8}$ we have 
	\[
		\|G(s,\psi,\alpha) - \tilde{L}_\alpha\psi\|_1 \leq C_{G}(\|s\|_{q^*} + \|s\|_2 Q_2(\psi) + Q_2^2(\psi)).
	\]	
\end{cor}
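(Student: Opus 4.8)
The plan is to prove the corollary directly from the decomposition
\[
G(s,\psi,\alpha) - \tilde{L}_\alpha\psi = G^{(1)}(s,\psi,\alpha) + G^{(2)}(\psi,\alpha)
\]
recorded in (\ref{GSubfunctions}) (using that $\omega_1 \equiv 0$ under the standing assumption of this section), combined with the two preceding Lipschitz estimates. Since only an $\ell^1$ bound on the combined nonlinearity is sought, the entire argument reduces to the triangle inequality in $\ell^1$ followed by citation of Lemmas~\ref{lem:G1} and \ref{lem:G2}. The identity above makes sense uniformly in $(i,j)$ because Hypothesis~\ref{hyp:PolarSoln} (together with the standing restriction on $\alpha^*$) keeps each $\bar{r}_{i,j}(\alpha)$ bounded away from zero, so the triangle inequality yields
\[
\|G(s,\psi,\alpha) - \tilde{L}_\alpha\psi\|_1 \leq \|G^{(1)}(s,\psi,\alpha)\|_1 + \|G^{(2)}(\psi,\alpha)\|_1.
\]

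Next I would bound the first summand by Lemma~\ref{lem:G1}; this is exactly where the standing hypothesis $\|s\|_\infty \leq \frac{3a}{8}$ of the corollary is invoked, since it is precisely the condition demanded by that lemma to keep $\bar{r}_{i,j}(\alpha) + s_{i,j}$ uniformly bounded away from zero and hence guarantee the Lipschitz constant for the radial-ratio term. This gives $\|G^{(1)}(s,\psi,\alpha)\|_1 \leq C_{G,1}(\|s\|_{q^*} + \|s\|_2 Q_2(\psi))$. For the second summand I would apply Lemma~\ref{lem:G2}, which requires no restriction on $s$ because $G^{(2)}$ is independent of $s$, giving $\|G^{(2)}(\psi,\alpha)\|_1 \leq C_{G,2}Q_2^2(\psi)$. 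Setting $C_G := \max\{C_{G,1}, C_{G,2}\}$ and summing the two estimates produces the claimed inequality.

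There is essentially no obstacle in this step: all of the genuine analytic content—the two applications of H\"older's inequality, the use of Hypothesis~\ref{hyp:pStar} to absorb the radial ratios into the constant $C_\mathrm{sol}$, and the Taylor expansion of sine underlying the quadratic $Q_2^2(\psi)$ bound—has already been carried out in Lemmas~\ref{lem:G1} and \ref{lem:G2}. The only point requiring minor care is the bookkeeping of the three distinct norms appearing on the right-hand side (the $\ell^{q^*}$ and $\ell^2$ norms of $s$, and the $Q_2$ semi-norm of $\psi$) and verifying that the hypothesis $\|s\|_\infty \leq \frac{3a}{8}$ of the corollary coincides with the hypothesis demanded by Lemma~\ref{lem:G1}; it does, so the collection of the two bounds into a single constant is immediate.
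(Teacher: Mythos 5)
Your proposal is correct and matches the paper exactly: the corollary is stated there as an immediate consequence of the decomposition $G(s,\psi,\alpha)-\tilde{L}_\alpha\psi = G^{(1)}(s,\psi,\alpha)+G^{(2)}(\psi,\alpha)$ (valid since $\omega_1\equiv 0$ and $G(0,0,\alpha)=0$), the triangle inequality in $\ell^1$, and Lemmas~\ref{lem:G1} and \ref{lem:G2}, with the $\|s\|_\infty\leq\frac{3a}{8}$ restriction inherited from Lemma~\ref{lem:G1}. Nothing further is needed.
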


\subsection{Nonlinear Stability of the Phase Components}\label{subsec:PhaseStability} 

In this subsection we use the linear and nonlinear estimates of the previous two subsections to prove Theorem~\ref{thm:PhaseDecay}. We write (\ref{GSlow}) as 
\begin{equation}\label{GSlow2}
	\frac{\mathrm{d}}{\mathrm{d}\tau}\psi = \tilde{L}_\alpha\psi + [G(\sigma(\psi(\tau),\alpha)+\rho(\alpha^{-1}\tau),\psi(\tau),\alpha) - \tilde{L}_\alpha\psi(\tau)],
\end{equation}
along with the initial condition $\psi(0) = \psi^0 \in \ell^1$. Then, using the variation of constants formula, (\ref{GSlow2}) can we written equivalently as
\begin{equation}\label{GSlowInt}
	\psi(t) = e^{\tilde{L}_\alpha t}\psi^0 + \int_0^t e^{\tilde{L}_\alpha (t-u)}[G(\sigma(\psi(u),\alpha)+\rho(\alpha^{-1}u),\psi(u),\alpha) - \tilde{L}_\alpha\psi(u)]\mathrm{d}u. 
\end{equation}
It will be the equation (\ref{GSlowInt}) which will remain the focus throughout this subsection. We note that we can no longer take $\alpha = 0$ since the temporal variable $\alpha^{-1}\tau$ will be undefined, and therefore we only consider $\alpha > 0$ and small.  

Let us now define the mapping, denoted $T_3$, by 
\begin{equation}\label{UMapping}
	T_3\psi(t) = e^{\tilde{L}_\alpha t}\psi^0 + \int_0^t e^{\tilde{L}_\alpha (t-u)}[G(\sigma(\psi(u),\alpha)+\rho(\alpha^{-1}u),\psi(u),\alpha) - \tilde{L}_\alpha\psi(u)]\mathrm{d}u,
\end{equation}	
so that fixed points of $T_3$ are exactly the solutions of (\ref{GSlow2}) via the equivalent formulation (\ref{GSlowInt}). Then, prior to stating our results, we note that for all $\psi \in \ell^1$ and $p \in [1,\infty]$ we have 
\begin{equation}\label{PhaseStabilityBnd1}
	\|\sigma(\psi,\alpha)+\rho(t)\|_p \leq \|\sigma(\psi,\alpha)\|_p + \|\rho(t)\|_p \leq \|\sigma(\psi,\alpha)\|_p + \|\rho(t)\|_1,
\end{equation}
following from the Lipschtz properties of the invariant manifold and the monotonicity of $\ell^p$ norms. This leads to the following lemma.

\begin{lem} \label{lem:QPhaseBnd} 
	Let $\tau_0 > 0$ be arbitrary and assume that 
	\begin{equation}\label{Q2PhaseBnd}
		Q_2(\psi(\tau)) \leq 2\varepsilon\tilde{C}_L(1 + \tau)^{-\min\{\frac{3}{2},2 - \frac{1}{q^*}\}}
	\end{equation}
	and
	\begin{equation}\label{Qq*PhaseBnd}
		Q_{q^*}(\psi(\tau)) \leq 2\varepsilon\tilde{C}_L(1 + \tau)^{-2+\frac{1}{q^*}\}}
	\end{equation}
	for all $0 \leq \tau \leq \tau_0$, for some $\varepsilon > 0$. Then, there exists $\alpha_1,\varepsilon^* > 0$, independent of $\tau_0$, such that for all $\alpha \in (0,\alpha_1]$ and $\varepsilon \in [0,\varepsilon^*]$, if $\|\psi^0\|_1,\|\rho^0\|_1 \leq \varepsilon$ we have 
	\[
		\begin{split}
			Q_2(T_3\psi(\tau)) &\leq 2\varepsilon\tilde{C}_L(1 + \tau)^{-\min\{\frac{3}{2},2 - \frac{1}{q^*}\}}, \\
			Q_{q^*}(T_3\psi(\tau)) &\leq 2\varepsilon\tilde{C}_L(1 + \tau)^{-2+\frac{1}{q^*}}, 	
		\end{split}
	\]
	for all $0 \leq \tau \leq \tau_0$. 
\end{lem}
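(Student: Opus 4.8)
The plan is to estimate $Q_2(T_3\psi(\tau))$ and $Q_{q^*}(T_3\psi(\tau))$ directly from the integral representation (\ref{UMapping}), splitting each into the contribution of the linear semigroup term $e^{\tilde{L}_\alpha\tau}\psi^0$ and the nonlinear Duhamel integral. For the linear piece I would apply Proposition~\ref{prop:LinearPhaseDecay} together with $\|\psi^0\|_1\le\varepsilon$: taking $p=2$ gives $Q_2(e^{\tilde{L}_\alpha\tau}\psi^0)\le\varepsilon\tilde{C}_L(1+\tau)^{-\min\{3/2,\,2-1/q^*\}}$ and taking $p=q^*$ gives $Q_{q^*}(e^{\tilde{L}_\alpha\tau}\psi^0)\le\varepsilon\tilde{C}_L(1+\tau)^{-(2-1/q^*)}$. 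Each of these is exactly half of the corresponding target bound, so the entire remaining task is to show that the Duhamel integral contributes at most the other half; since the constants produced below will be independent of $\tau_0$, so will the resulting thresholds $\alpha_1,\varepsilon^*$.

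For the nonlinear integral I would move $Q_2$ (resp. $Q_{q^*}$) inside, bound the semigroup factor $e^{\tilde{L}_\alpha(\tau-u)}$ applied to the $\ell^1$ function $G-\tilde{L}_\alpha\psi$ by Proposition~\ref{prop:LinearPhaseDecay}, and then control $\|G(\sigma(\psi(u),\alpha)+\rho(\alpha^{-1}u),\psi(u),\alpha)-\tilde{L}_\alpha\psi(u)\|_1$ with Corollary~\ref{cor:GBnd}. Writing $s=\sigma(\psi(u),\alpha)+\rho(\alpha^{-1}u)$, the properties of the invariant manifold from Theorem~\ref{thm:InvMan} give $\|s\|_{q^*}\le\sqrt{\alpha}\,Q_{q^*}(\psi(u))+\|\rho(\alpha^{-1}u)\|_1$ and $\|s\|_2\le\sqrt{\alpha}\,Q_2(\psi(u))+\|\rho(\alpha^{-1}u)\|_1$ via monotonicity of the $\ell^p$ norms, while $\|s\|_\infty\le\sqrt{\alpha}+2\varepsilon$ ensures $\|s\|_\infty\le\frac{3a}{8}$ so that Corollary~\ref{cor:GBnd} applies for $\alpha,\varepsilon$ small. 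Substituting the standing hypotheses (\ref{Q2PhaseBnd}), (\ref{Qq*PhaseBnd}) and $\|\rho(\alpha^{-1}u)\|_1\le2\varepsilon e^{-\beta u/\alpha}$ then writes the nonlinearity as a sum of terms decaying in $u$, each carrying a small prefactor: a factor $\sqrt{\alpha}$ from the manifold, a factor $\varepsilon$ from the quadratic terms $\|s\|_2 Q_2(\psi)$ and $Q_2^2(\psi)$, or the exponential $\rho$-weight.

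The final step is to evaluate the resulting time convolutions. The purely algebraic terms are handled with Lemma~\ref{lem:IntegralLemma}: the slowest contribution is the $\|s\|_{q^*}$ piece, which after convolution of the kernel $(1+\tau-u)^{-\gamma_1}$ (with $\gamma_1=\min\{3/2,2-1/q^*\}$ for the $Q_2$ estimate and $\gamma_1=2-1/q^*$ for the $Q_{q^*}$ estimate) against $(1+u)^{-(2-1/q^*)}$ produces the exponent $\min\{\gamma_1+(2-1/q^*)-1,\,\gamma_1,\,2-1/q^*\}$; since $q^*>1$ forces $1-1/q^*>0$ and $2-1/q^*\ge\gamma_1$, this minimum equals $\gamma_1$, so the target rate is recovered exactly (one also checks $\gamma_1>1$ and $2-1/q^*>1$, so the admissibility hypotheses of Lemma~\ref{lem:IntegralLemma} hold). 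The $\rho$-terms are handled with Lemma~\ref{lem:IntegralLemmaExp}, which returns the same algebraic rate with an extra factor of $\alpha$. Collecting everything, each nonlinear contribution is bounded by $\varepsilon\tilde{C}_L(1+\tau)^{-\gamma_1}$ times a constant proportional to $\sqrt{\alpha}$, $\alpha$, or $\varepsilon$; choosing $\alpha_1$ and $\varepsilon^*$ small enough that these constants sum to at most $1$ makes the Duhamel integral the missing half, and adding it to the linear half yields the claimed bounds on $[0,\tau_0]$. The main obstacle is the bookkeeping needed to guarantee that every decay exponent returned by the convolution is at least the target $\gamma_1$; the structural fact making this work is the strict inequality $q^*>1$ from Hypothesis~\ref{hyp:pStar}, which simultaneously supplies the positive gain $1-1/q^*$ in Lemma~\ref{lem:IntegralLemma} and keeps all exponents strictly above $1$.
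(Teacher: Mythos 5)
Your proposal is correct and follows essentially the same route as the paper's proof: the same splitting of (\ref{UMapping}) into the semigroup term and the Duhamel integral, with Proposition~\ref{prop:LinearPhaseDecay} and Corollary~\ref{cor:GBnd} controlling the two pieces, the manifold estimate $\|\sigma(\psi,\alpha)\|_p \leq \sqrt{\alpha}\,Q_p(\psi)$ and the exponential decay of $\rho$ feeding into Lemmas~\ref{lem:IntegralLemma} and \ref{lem:IntegralLemmaExp}, and smallness of $\alpha$ and $\varepsilon$ absorbing the nonlinear contributions into the second half of the target bound. Your explicit verification that the convolution exponents collapse to $\min\{\frac{3}{2},2-\frac{1}{q^*}\}$ (using $q^*>1$) is in fact slightly more careful than the paper's own bookkeeping.
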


\begin{proof}
	We begin by noting that restricting $\varepsilon \leq \delta^*$ we have guaranteed that $\|\rho^0\|_1\leq \varepsilon$ implies that $\|\rho(\alpha^{-1}\tau)\|_1 \leq 2\varepsilon e^{\frac{-\beta}{\alpha}\tau}$, from Theorem~\ref{thm:InvMan}. Hence, we ensure that $\varepsilon^* \leq \delta^*$. 
	
	Then, we begin with the bound on $Q_2$. Using (\ref{UMapping}) we obtain
	\[
		Q_2(T_3\psi(\tau)) \leq Q_2(e^{\tilde{L}_\alpha \tau}\psi^0) + \int_0^{\tau} Q_2(e^{\tilde{L}_\alpha (\tau-u)}[G(\sigma(\psi(u),\alpha)+\rho(\alpha^{-1}u),\psi(u),\alpha) - \tilde{L}_\alpha\psi(u)])\mathrm{d}u, 
	\]
	for all $0 \leq \tau \leq \tau_0$, and from Proposition~\ref{prop:LinearPhaseDecay} and Corollary~\ref{cor:GBnd} we obtain
	\begin{equation}\label{PhaseQ2Bnd}
		\begin{split}
		Q_2(T_3\psi(\tau)) &\leq \tilde{C}_L(1+\tau)^{-\min\{\frac{3}{2},2 - \frac{1}{q^*}\}}\|\psi^0\|_1 \\ 
		&+ \tilde{C}_LC_G\int_0^{\tau}(1+\tau - u)^{-\min\{\frac{3}{2},2 - \frac{1}{q^*}\}}\|\sigma(\psi(u),\alpha)+\rho(\alpha^{-1}u)\|_{q^*}\mathrm{d}u \\ 
		&+ \tilde{C}_LC_G\int_0^{\tau}(1+\tau - u)^{-\min\{\frac{3}{2},2 - \frac{1}{q^*}\}}\|\sigma(\psi(u),\alpha)+\rho(\alpha^{-1}u)\|_2^2\mathrm{d}u \\ 
		&+ \tilde{C}_LC_G\int_0^{\tau}(1+\tau - u)^{-\min\{\frac{3}{2},2 - \frac{1}{q^*}\}}\|\sigma(\psi(u),\alpha)+\rho(\alpha^{-1}u)\|_2 Q_2(\psi(u))\mathrm{d}u\\ 
		&+ \tilde{C}_LC_G\int_0^{\tau}(1+\tau - u)^{-\min\{\frac{3}{2},2 - \frac{1}{q^*}\}}Q_2^2(\psi(u))\mathrm{d}u	
		\end{split}
	\end{equation}
	where $\tilde{C}_L > 0$ is the constant coming from Proposition~\ref{prop:LinearPhaseDecay} and $C_G > 0$ is the constant coming from Corollary~\ref{cor:GBnd}. We now bound each integral in (\ref{PhaseQ2Bnd}) separately, and bring them together at the end.
	
	First, using (\ref{PhaseStabilityBnd1}) we get   
	\[
		\begin{split}
		\int_0^{\tau}(1+\tau - u)^{-\min\{\frac{3}{2},2 - \frac{1}{q^*}\}}&\|\sigma(\psi(u),\alpha)+\rho(\alpha^{-1}u)\|_{q^*}\mathrm{d}u \\ 
		&\leq 2\sqrt{\alpha}\varepsilon C_\mathrm{thm}\int_0^{\tau}(1+\tau - u)^{-\min\{\frac{3}{2},2 - \frac{1}{q^*}\}}Q_{q^*}(\psi(u))\mathrm{d}u \\
		&+\int_0^{\tau}(1+\tau - u)^{-\min\{\frac{3}{2},2 - \frac{1}{q^*}\}}\|\rho(\alpha^{-1}u)\|_1\mathrm{d}u. 
		\end{split}
	\]
	Then, recalling that $\|\rho(\alpha^{-1}\tau)\|_1\leq 2\varepsilon e^{-\frac{\beta}{\alpha}\tau}$ for all $0 \leq \tau \leq \tau_0$, we may apply Lemma~\ref{lem:IntegralLemmaExp} to find that there exists a constant $C_{\min\{\frac{3}{2},2 - \frac{1}{q^*}\},\beta} > 0$ such that
	\[
		\int_0^{\tau}(1+\tau - u)^{-\min\{\frac{3}{2},2 - \frac{1}{q^*}\}}\|\rho(\alpha^{-1}u)\|_1\mathrm{d}u \leq 2\alpha\varepsilon C_{\min\{\frac{3}{2},2 - \frac{1}{q^*}\},\beta}(1 + \tau)^{-\min\{\frac{3}{2},2 - \frac{1}{q^*}\}}, 
	\]
	for all $0 \leq \tau \leq \tau_0$. Then, the bound (\ref{Qq*PhaseBnd}) allows one to apply Lemma~\ref{lem:IntegralLemma} to find that
	\[
		\begin{split}
		\int_0^{\tau}(1+\tau - u)^{-\min\{\frac{3}{2},2 - \frac{1}{q^*}\}}&Q_{q^*}(\psi(u))\mathrm{d}u \\ 
		&\leq 2\sqrt{\alpha}\varepsilon \tilde{C}_L\int_0^{\tau}(1+\tau - u)^{-\min\{\frac{3}{2},2 - \frac{1}{q^*}\}}(1 + u)^{-2 + \frac{1}{q^*}}\mathrm{d}u \\
		&\leq2\sqrt{\alpha}\varepsilon \tilde{C}_LC_{\min\{\frac{3}{2},2 - \frac{1}{q^*}\},-2+\frac{1}{q^*}}(1 + \tau)^{-\min\{\frac{3}{2},2 - \frac{1}{q^*}\}}. 
		\end{split}	
	\]	
	Putting this together gives the bound
	\begin{equation}\label{PhaseQ2Bnd_1}
		\begin{split}
		\int_0^{\tau}&(1+\tau - u)^{-\min\{\frac{3}{2},2 - \frac{1}{q^*}\}}\|\sigma(\psi(u),\alpha)+\rho(\alpha^{-1}u)\|_{q^*}\mathrm{d}u \\ 
		&\leq4\sqrt{\alpha}\varepsilon^2[2\sqrt{\alpha}\delta^*C_{\min\{\frac{3}{2},2 - \frac{1}{q^*}\},\beta} + \tilde{C}_LC_{\min\{\frac{3}{2},2 - \frac{1}{q^*}\},2-\frac{1}{q^*}}] (1 + \tau)^{-\min\{\frac{3}{2},2 - \frac{1}{q^*}\}}.	
		\end{split}
	\end{equation}
	
	Very similar manipulations to those used to bound (\ref{PhaseQ2Bnd_1}) yield the bounds  
	\begin{equation}\label{PhaseQ2Bnd_3}
		\begin{split}
		\int_0^{\tau}&(1+\tau - u)^{-\min\{\frac{3}{2},2 - \frac{1}{q^*}\}}\|\sigma(\psi(u),\alpha)+\rho(\alpha^{-1}u)\|_2 Q_2(\psi(u))\mathrm{d}u \\
		&\leq 2\sqrt{\alpha}\varepsilon\tilde{C}_L(2\tilde{C}_LC_{\min\{\frac{3}{2},2 - \frac{1}{q^*}\},\min\{\frac{3}{2},2 - \frac{1}{q^*}\}} \\ 
		&+ \sqrt{\alpha}\delta^*C_{\min\{\frac{3}{2},2 - \frac{1}{q^*}\},\beta}])(1+\tau)^{-\min\{\frac{3}{2},2 - \frac{1}{q^*}\}},
		\end{split}
	\end{equation}
	and
	\begin{equation}\label{PhaseQ2Bnd_4}
		\int_0^{\tau}(1+\tau - u)^{-\min\{\frac{3}{2},2 - \frac{1}{q^*}\}}Q_2^2(\psi(u))\mathrm{d}u \leq 4\varepsilon^2\tilde{C}_L^2C_{\min\{\frac{3}{2},2 - \frac{1}{q^*}\},\min\{\frac{3}{2},2 - \frac{1}{q^*}\}}(1+\tau)^{-\min\{\frac{3}{2},2 - \frac{1}{q^*}\}}.	
	\end{equation}
	
	Putting (\ref{PhaseQ2Bnd_1})-(\ref{PhaseQ2Bnd_4}) into (\ref{PhaseQ2Bnd}) gives the bound 
	\[
		\begin{split}
		Q_2(T_3\psi(\tau)) &\leq (\tilde{C}_L\|\psi^0\|_1 + \varepsilon h(\alpha,\varepsilon))(1 + \tau)^{-\min\{\frac{3}{2},2 - \frac{1}{q^*}\}} \\ 
		&\leq \varepsilon(\tilde{C}_L + h(\alpha,\varepsilon))(1 + \tau)^{-\min\{\frac{3}{2},2 - \frac{1}{q^*}\}}, 	
		\end{split}
	\]
	where we have introduced the function $h(\alpha,\varepsilon)$ to collect all the constants coming from (\ref{PhaseQ2Bnd}) and (\ref{PhaseQ2Bnd_1})-(\ref{PhaseQ2Bnd_4}). The most important point is that $h(0,0) = 0$ and that $h$ depends continuously on $\alpha,\varepsilon \geq 0$ Hence, for sufficiently small $\alpha,\varepsilon \geq 0$ one can guarantee that $h(\alpha,\varepsilon) \leq \tilde{C}_L$, giving the bound on $Q_2(T_3\psi(\tau))$. The bound on $Q_{q^*}(T_3\psi(\tau))$ follows via nearly identical manipulations and is therefore omitted. 
\end{proof} 

\begin{lem}\label{lem:UFixedPt} 
	For all $\alpha \in (0,\alpha_1]$ and $\varepsilon \in [0,\varepsilon^*]$ and $\|\psi^0\|_1,\|\rho^0\|_1 \leq \varepsilon$ there exists a unique fixed point of $T_3$, denoted $\psi(\tau)$, such that $\psi(0) = \psi^0$ and 
	\begin{equation}\label{FixedPtDecay}
		\begin{split}
			\|\psi(\tau)\|_1 &\leq 2\varepsilon, \\
			Q_{2}(\psi(\tau)) &\leq 2\varepsilon\tilde{C}_L(1 + \tau)^{-\min\{\frac{3}{2},2 - \frac{1}{q^*}\}}, \\
			Q_{q^*}(\psi(\tau)) &\leq 2\varepsilon\tilde{C}_L(1 + \tau)^{-2 + \frac{1}{q^*}}, 
		\end{split}
	\end{equation}
	for all $\tau \geq 0$.
\end{lem}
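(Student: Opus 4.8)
The plan is to realise $\psi$ as the unique fixed point of $T_3$ in a complete metric space that already encodes the decay rates, so that (\ref{FixedPtDecay}) holds automatically once the fixed point is produced, exactly as was done for $T_1$ and $T_2$. Concretely I would work in
\[
	Z = \{\psi \in C([0,\infty);\ell^1):\ \psi(0) = \psi^0,\ \|\psi(\tau)\|_1 \leq 2\varepsilon,\ Q_2(\psi(\tau)) \leq 2\varepsilon\tilde{C}_L(1+\tau)^{-\min\{\frac{3}{2},2-\frac{1}{q^*}\}},\ Q_{q^*}(\psi(\tau)) \leq 2\varepsilon\tilde{C}_L(1+\tau)^{-2+\frac{1}{q^*}}\},
\]
equipped with the weighted distance
\[
	d(\psi,\tilde{\psi}) = \sup_{\tau\geq0}\Big[\|\psi(\tau)-\tilde{\psi}(\tau)\|_1 + (1+\tau)^{\min\{\frac{3}{2},2-\frac{1}{q^*}\}}Q_2(\psi(\tau)-\tilde{\psi}(\tau)) + (1+\tau)^{2-\frac{1}{q^*}}Q_{q^*}(\psi(\tau)-\tilde{\psi}(\tau))\Big].
\]
First I would verify that $(Z,d)$ is complete: it is a closed subset of the Banach space of bounded continuous $\ell^1$-valued functions, the three pointwise constraints being preserved under uniform $\ell^1$-limits because $Q_2$ and $Q_{q^*}$ are continuous with respect to the $\ell^1$ topology by Lemma~\ref{lem:NormBnds} (which gives $Q_p(\cdot)\le 8\|\cdot\|_1$). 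The same lemma shows $Q_2(\psi^0),Q_{q^*}(\psi^0)\le 8\varepsilon$, so $Z$ is nonempty after the harmless normalisation $\tilde{C}_L\geq 4$.

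The second step is invariance, $T_3:Z\to Z$. The two semi-norm bounds are precisely the content of Lemma~\ref{lem:QPhaseBnd}, whose hypotheses (\ref{Q2PhaseBnd})--(\ref{Qq*PhaseBnd}) are the defining bounds of $Z$; since its conclusion holds on every $[0,\tau_0]$ with constants independent of $\tau_0$, letting $\tau_0\to\infty$ yields the bounds for all $\tau\ge0$. The $\ell^1$ bound $\|T_3\psi(\tau)\|_1\le 2\varepsilon$ is handled separately: at $p=1$ the estimate of Proposition~\ref{prop:LinearPhaseDecay} contributes uniform boundedness (rather than decay) of $e^{\tilde{L}_\alpha\tau}\psi^0$, while the integral term is controlled through Corollary~\ref{cor:GBnd} together with Lemmas~\ref{lem:IntegralLemma} and \ref{lem:IntegralLemmaExp}. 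Because the source terms $\|\sigma(\psi)+\rho\|_{q^*}$, $\|\sigma(\psi)+\rho\|_2\,Q_2(\psi)$ and $Q_2^2(\psi)$ inherit integrable decay from the $Z$-bounds, and $\|\rho(\alpha^{-1}u)\|_1\le 2\varepsilon e^{-\beta u/\alpha}$ by Theorem~\ref{thm:InvMan}, this contribution is of higher order in $\varepsilon$ and $\sqrt{\alpha}$ and is absorbed once the constants are fixed.

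The third step is the contraction estimate. Since both images share the initial value $\psi^0$ the homogeneous terms cancel, leaving
\[
	T_3\psi(\tau)-T_3\tilde{\psi}(\tau) = \int_0^\tau e^{\tilde{L}_\alpha(\tau-u)}[\Phi(\psi(u))-\Phi(\tilde{\psi}(u))]\,\mathrm{d}u,\qquad \Phi(\psi)=G(\sigma(\psi,\alpha)+\rho,\psi,\alpha)-\tilde{L}_\alpha\psi,
\]
with $\rho=\rho(\alpha^{-1}u)$ the already-determined fast component. I would bound each of the three pieces of $d(T_3\psi,T_3\tilde{\psi})$ by repeating the computation of Lemma~\ref{lem:QPhaseBnd}, now with difference versions of Lemmas~\ref{lem:G1} and \ref{lem:G2}. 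The crucial feature is that $\Phi$ is the genuinely superlinear remainder, so its Lipschitz constant carries a factor $\sqrt{\alpha}+\varepsilon$: the $s$-dependence enters only through $\|\sigma(\psi)-\sigma(\tilde{\psi})\|\le \sqrt{\alpha}\,Q(\psi-\tilde{\psi})$, and the $\psi$-dependence through the factors $Q_2(\psi),Q_2(\tilde{\psi})$, both small on $Z$. Convolving these decaying coefficients against the semigroup bounds of Proposition~\ref{prop:LinearPhaseDecay} via Lemma~\ref{lem:IntegralLemma} gives a $\tau$-uniform bound, and shrinking $\alpha,\varepsilon$ drives the contraction constant below $\tfrac12$. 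The Banach fixed point theorem then produces a unique $\psi\in Z$, and uniqueness among all $\ell^1$-solutions (recorded at the start of this section) identifies it with the global solution, establishing (\ref{FixedPtDecay}).

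I expect the contraction step, and in particular the choice of the metric $d$, to be the main obstacle. At $p=1$ the semigroup $e^{\tilde{L}_\alpha t}$ provides no temporal decay, so a naive $\sup_\tau\|\cdot\|_1$ metric produces a divergent time integral of the type $\int_0^\tau \sqrt{\alpha}\,\mathrm{d}u$; measuring differences in the weighted norm above, matched to the decay rates built into $Z$, is exactly what lets the integral lemmas return a summable, uniformly small bound. Correctly balancing the three weights against the mixed $\ell^1$/$Q_2$/$Q_{q^*}$ structure of the nonlinear Lipschitz estimates, and simultaneously keeping each of the three defining constraints strictly improved under $T_3$, is the delicate part of the argument.
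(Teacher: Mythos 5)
Your proposal is correct, but it takes a genuinely different route from the paper. The paper's proof is a bootstrapping/continuation argument (modelled on \cite[Theorem~4.5]{Bramburger3}): it first produces a fixed point of $T_3$ on $[0,1]$ via the Banach fixed point theorem on the space of functions satisfying (\ref{Q2PhaseBnd})--(\ref{Qq*PhaseBnd}) with $\tau_0=1$, then extends inductively to $[0,n+1]$ by working on the space of functions that agree with the already-constructed fixed point on $[0,n]$, with Lemma~\ref{lem:QPhaseBnd} (whose constants are uniform in $\tau_0$) guaranteeing at every stage that $T_3$ maps the space into itself and that the decay rates propagate. You instead run a single global-in-time fixed point argument in a weighted space $Z$ on $[0,\infty)$. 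Both approaches lean on Lemma~\ref{lem:QPhaseBnd} for invariance, but the burden of the contraction estimate is distributed differently: the bootstrap only ever needs a contraction on a unit-length interval (where the difference of two candidates vanishes on the previously constructed part, so the time integral is trivially short), whereas your route requires a genuinely $\tau$-uniform weighted contraction, hence the difference versions of Lemmas~\ref{lem:G1} and \ref{lem:G2} and the convolution estimates of Lemmas~\ref{lem:IntegralLemma} and \ref{lem:IntegralLemmaExp}; your observation that the Lipschitz constant of the remainder carries a factor $\sqrt{\alpha}+\varepsilon$ against integrable decay (using $2-\tfrac{1}{q^*}>1$ since $q^*>1$) is exactly what makes this close. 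What your version buys is a one-shot argument with an explicit uniqueness class (the weighted space) and, notably, more care on two points the paper's sketch glosses over: the separate verification of the $\|\psi(\tau)\|_1\le 2\varepsilon$ bound, which is part of (\ref{FixedPtDecay}) but not of the conclusion of Lemma~\ref{lem:QPhaseBnd}, and the nonemptiness of the space at $\tau=0$ via $Q_p(\psi^0)\le 8\|\psi^0\|_1$ and the normalisation $\tilde{C}_L\ge 4$. What the paper's version buys is that it never needs the global weighted contraction at all, at the cost of an induction and an appeal to an external reference for the details.
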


\begin{proof}
	This proof proceeds in an identical manner to \cite[Theorem~4.5]{Bramburger3}, and therefore we will merely describe the important points which must be considered. The proof is simply a bootstrapping argument via an application of the Banach fixed point theorem. One begins by obtaining a fixed point of $T_3$ for all $\tau \in [0,1]$ by using Lemma~\ref{lem:QPhaseBnd} to show that $T_3$ is a well-defined mapping on a complete metric space of functions satisfying (\ref{Q2PhaseBnd}) and (\ref{Qq*PhaseBnd}) with $\tau_0 = 1$. Upon obtaining such a fixed point with the Banach fixed point theorem, we extend this solution to $\tau \in [0,2]$ by defining an new complete metric space of functions satisfying (\ref{Q2PhaseBnd}) and (\ref{Qq*PhaseBnd}) for $\tau_0 = 2$, such that these functions agree with the fixed point of $T_3$ on $[0,1]$. Again, Lemma~\ref{lem:QPhaseBnd} gives that $T_3$ would be a well-defined mapping in this case. The argument then proceeds inductively to show that if one has a fixed point on $[0,n]$ for any integer $n\geq 1$, then it can be extended to a fixed point on $[0,n+1]$ satisfying the decay rates (\ref{Q2PhaseBnd}) and (\ref{Qq*PhaseBnd}). This argument therefore gives the proof of the lemma.     
\end{proof}

Having now obtained a fixed point of $T_3$ satisfying the decay rates (\ref{FixedPtDecay}), we are now able to prove that the other decay rates of Theorem~\ref{thm:PhaseDecay} follow. The results of Lemma~\ref{lem:UFixedPt} and Corollary~\ref{cor:FixedPtDecay} therefore finish the proof of Theorem~\ref{thm:PhaseDecay}. The important point to note is that the following corollary dictates that it is only the decay rates on $Q_2(\psi(\tau))$ and $Q_{q^*}(\psi(\tau))$ that are required to obtain all other decay rates stated in Theorem~\ref{thm:PhaseDecay}. Of course, this should be apparent to the reader since Corollary~\ref{cor:GBnd} states the nonlinear terms are only bounded by $Q_2(\psi(\tau))$ and $Q_{q^*}(\psi(\tau))$.   

\begin{cor}\label{cor:FixedPtDecay} 
	There exists a constant $C_\psi > 0$ such that for all $\alpha \in (0,\alpha_1]$ and $\varepsilon \in [0,\varepsilon^*]$ and $\|\psi^0\|_1,\|\rho^0\|_1 \leq \varepsilon$ the unique fixed point of $T_3$ satisfying $\psi(0) = \psi^0$ and (\ref{FixedPtDecay}) further satisfies 
	\[
		\begin{split}
			\|\psi(t)\|_p &\leq \varepsilon C_\psi(1 + \alpha t)^{-1 + \frac{1}{p}}, \\
			Q_p(\psi(t)) &\leq \varepsilon C_\psi(1 + \alpha t)^{-\min\{\frac{3}{2},2 - \frac{1}{q^*}\}},
		\end{split}
	\]
	for all $\tau\geq 0$, $\alpha \in [0,\alpha_1]$, and $p \in [1,\infty]$.
\end{cor}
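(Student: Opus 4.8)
The plan is to extract both families of bounds directly from the Duhamel identity (\ref{GSlowInt}) satisfied by the fixed point $\psi$, working throughout in the slow time $\tau=\alpha t$ and rescaling only at the end. First I would fix $p\in[1,\infty]$ and apply the relevant ($\ell^p$ or $Q_p$) semigroup estimate of Proposition~\ref{prop:LinearPhaseDecay} to the homogeneous term $e^{\tilde L_\alpha\tau}\psi^0$ and, under the integral sign, to $e^{\tilde L_\alpha(\tau-u)}$ acting on the source $G(\sigma(\psi(u),\alpha)+\rho(\alpha^{-1}u),\psi(u),\alpha)-\tilde L_\alpha\psi(u)$. The $\ell^1$-norm of that source is controlled by Corollary~\ref{cor:GBnd} through the three quantities $\|\sigma+\rho\|_{q^*}$, $\|\sigma+\rho\|_2\,Q_2(\psi)$ and $Q_2^2(\psi)$; I would then bound each $\|\sigma(\psi(u),\alpha)+\rho(\alpha^{-1}u)\|_r$ via (\ref{PhaseStabilityBnd1}), the manifold estimate $\|\sigma(\psi,\alpha)\|_r\le\sqrt\alpha\,Q_r(\psi)$ from Theorem~\ref{thm:InvMan}, and the exponential bound $\|\rho(\alpha^{-1}u)\|_1\le 2\varepsilon e^{-\beta u/\alpha}$. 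Feeding in the already-established rates $Q_2(\psi(u))\le 2\varepsilon\tilde C_L(1+u)^{-\min\{3/2,\,2-1/q^*\}}$ and $Q_{q^*}(\psi(u))\le 2\varepsilon\tilde C_L(1+u)^{-2+1/q^*}$ from (\ref{FixedPtDecay}) renders every term of the source an explicit power of $(1+u)$, save for the $\rho$-contribution which is a power times an exponential.

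Next I would evaluate the two resulting families of convolutions. The power-law ones are handled by Lemma~\ref{lem:IntegralLemma}: writing the semigroup exponent as $\gamma_1$ (equal to $1-1/p$ for the $\ell^p$ bound and to $\min\{2-1/p,\,2-1/q^*\}$ for the $Q_p$ bound) and the source exponent as $\gamma_2$, one checks that $\gamma_2\ge 1$ and $\gamma_1\le\gamma_2$ in every case, so the minimum in Lemma~\ref{lem:IntegralLemma} is attained at $\gamma_1$ and the convolution inherits the bare semigroup rate $(1+\tau)^{-\gamma_1}$. The $\rho$-contributions are power-times-exponential and are dispatched by Lemma~\ref{lem:IntegralLemmaExp}, which again returns $(1+\tau)^{-\gamma_1}$ but with an extra factor of $\alpha$. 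Because every nonlinear term carries at least one factor of $\sqrt\alpha$, $\alpha$ or an extra $\varepsilon$ relative to the homogeneous term $\tilde C_L\|\psi^0\|_1\le\tilde C_L\varepsilon$, and because the integral constants are uniformly bounded over the compact range of exponents by the remark following Lemma~\ref{lem:IntegralLemmaExp}, shrinking $\alpha_1,\varepsilon^*$ if necessary makes the total at most $\varepsilon C_\psi(1+\tau)^{-\gamma_1}$.

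For the two displayed conclusions I would then split over $p$. The endpoint $p=1$ of the $\ell^p$ bound is immediate from the boundedness $\|\psi(\tau)\|_1\le 2\varepsilon$ in (\ref{FixedPtDecay}), and for $p>1$ the estimate above yields $(1+\tau)^{-(1-1/p)}$. For the $Q_p$ bound, the range $p\in[2,\infty]$ is free: the monotonicity $Q_p(\psi)\le Q_2(\psi)$ of Lemma~\ref{lem:NormBnds} lets these inherit the $Q_2$ rate already recorded in (\ref{FixedPtDecay}), while the remaining range $1\le p<2$ follows from the Duhamel estimate with the $Q_p$ semigroup decay. Substituting $\tau=\alpha t$ turns each $(1+\tau)$ into $(1+\alpha t)$ and produces the stated bounds, which together with Lemma~\ref{lem:UFixedPt} completes the proof of Theorem~\ref{thm:PhaseDecay}.

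The hard part will be the bookkeeping needed to certify that no decay is lost to the nonlinearity, i.e. that in each convolution the minimum of Lemma~\ref{lem:IntegralLemma} is genuinely realized by the semigroup exponent $\gamma_1$ rather than by $\gamma_2$ or $\gamma_1+\gamma_2-1$. The quadratic sources $\|\sigma+\rho\|_2\,Q_2(\psi)$ and $Q_2^2(\psi)$ decay fast enough that this is automatic, so the only delicate comparison is the linear-in-$s$ source $\|\sigma+\rho\|_{q^*}$, whose rate is exactly $2-1/q^*$; here one must also verify that the degenerate case $\gamma_1=\gamma_2=1$ excluded by Lemma~\ref{lem:IntegralLemma} (which would force $q^*=1$) does not occur, and keep track of the fact that the nonlinear $\ell^{q^*}$ feedback enters through $Q_{q^*}(\psi)$, tying the whole scheme back to Hypothesis~\ref{hyp:pStar}.
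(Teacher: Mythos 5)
Your proposal is correct and follows essentially the same route as the paper's proof: start from the fixed-point identity (\ref{GSlowInt}), apply Proposition~\ref{prop:LinearPhaseDecay} to the semigroup and Corollary~\ref{cor:GBnd} to the source, feed in the rates (\ref{FixedPtDecay}), and close the convolutions with Lemmas~\ref{lem:IntegralLemma} and \ref{lem:IntegralLemmaExp}, using continuity of their constants to get uniformity in $p\in[1,\infty]$. The paper states this tersely (``nearly identical to the bounds (\ref{PhaseQ2Bnd_1})--(\ref{PhaseQ2Bnd_4})''), so your explicit exponent bookkeeping --- checking $\gamma_2=2-\tfrac{1}{q^*}>1$ so the minimum lands on the semigroup rate, handling $p=1$ by boundedness, and splitting the $Q_p$ range at $p=2$ via the monotonicity of Lemma~\ref{lem:NormBnds} --- is a faithful, and indeed more careful, filling-in of the same argument.
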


\begin{proof}
	First, since we assume $\psi(\tau)$ is a fixed point of $T_3$, then it necessarily satisfies (\ref{GSlowInt}). Then, for $p \in [1,\infty]$ fixed, from Proposition~\ref{prop:LinearPhaseDecay} and Corollary~\ref{cor:GBnd} 
	\[
		\begin{split}
		\|\psi(\tau)\|_p \leq (1 + \alpha t)^{-1 + \frac{1}{p}}\|\psi^0\|_1 &+ \tilde{C}_LC_G\int_0^{\tau}(1+\tau - u)^{-1 + \frac{1}{p}}\|\sigma(\psi(u),\alpha)+\rho(\alpha^{-1}u)\|_{q^*}\mathrm{d}u \\ 
		&+ \tilde{C}_LC_G\int_0^{\tau}(1+\tau - u)^{-1 + \frac{1}{p}}\|\sigma(\psi(u),\alpha)+\rho(\alpha^{-1}u)\|_2 Q_2(\psi(u))\mathrm{d}u\\ 
		&+ \tilde{C}_LC_G\int_0^{\tau}(1+\tau - u)^{-1 + \frac{1}{p}}Q_2^2(\psi(u))\mathrm{d}u.	
		\end{split} 
	\]
	From here bounding each of the integrals is nearly identical to the bounds (\ref{PhaseQ2Bnd_1})-(\ref{PhaseQ2Bnd_4}), with the notable exceptions being that the rate of decay will now be $-1 + \frac{1}{p}$ and the constants obtained from Lemmas~\ref{lem:IntegralLemma} and \ref{lem:IntegralLemmaExp} will reflect these different decay rates. By the continuity of the constants from Lemmas~\ref{lem:IntegralLemma} and \ref{lem:IntegralLemmaExp}, they are uniformly bounded in $p\in[1,\infty]$ and hence one obtains the desired bound on $\|\psi(t)\|_p$. A similar manipulation yields the bounds on $Q_p(\psi(t))$.     
\end{proof} 

\section{Discussion}\label{sec:Discussion} 

In this work we have provided a series of sufficient conditions that demonstrate the local asymptotic stability of periodic solutions to our Lambda-Omega lattice dynamical system. Recall that our first assumption, Hypotheses~\ref{hyp:LambdaOmega}, simply states that our Lambda-Omega system should generalized a spatially-discretized Ginzburg-Landau equation and that Hypothesis~\ref{hyp:PolarSoln} assumes the existence of a periodic solution that exists for sufficiently small positive coupling values $\alpha \geq 0$. These two assumptions alone are all that are required to demonstrate the existence of an invariant slow manifold which is locally asymptotically stable with uniform exponential rate of decay. Although this result is certainly not surprising, it is necessary for our understanding of local asymptotic stability in this Lambda-Omega setting, as well as demonstrates a useful extension of Hale's integral manifold theorems to the infinite-dimensional lattice dynamical system context.  

Upon proving the existence of a locally asymptotically stable invariant manifold to our Lambda-Omega system, we turned our attention to investigating the behaviour of trajectories starting near our periodic orbit on this manifold. In order to do so we required Hypothesis~\ref{hyp:LinearPhase} which assumed the decay of a certain semigroup with infinitesimal generator closely related to the linearization about the periodic solution on the invariant manifold. We recall that these decay rates were not arbitrary and were confirmed in Section~\ref{sec:Applications} for a wide variety of infinitesimal generators. Our final assumption, Hypothesis~\ref{hyp:pStar} requires that the radial solutions asymptotically become constant as one moves away from the centre of the lattice. Again, this assumption is quite technical, but we saw that it is satisfied for a number of interesting examples of periodic solutions to our Lambda-Omega lattice dynamical system. Furthermore, this assumption appears to be necessary since it allows one to achieve the result Lemma~\ref{lem:AsymBnd}, which in turn gave the linearized decay estimates of Proposition~\ref{prop:LinearPhaseDecay}. From the proofs of these auxiliary results it should be clear to the reader that simple boundedness of the radial components is not sufficient to obtain the nonlinear stability results of Theorem~\ref{thm:PhaseDecay} and therefore we necessitate these radial components asymptotically approach the same value at a fast enough rate. 

With regards to weakening hypotheses, a quick comparison between Hypothesis~\ref{hyp:LambdaOmega} and the assumptions put on the Lambda-Omega lattice dynamical system studied in \cite{Bramburger2} reveals a slight, but important, difference in the form of the function $\omega$. That is, our main result Theorem~\ref{thm:PhaseDecay} assumed that $\omega$ was a constant function, whereas this was not a necessary assumption to obtain rotating wave solutions to system (\ref{LambdaOmegaLDS}). Hence, it would be of interest to determine whether the class of functions $\omega$ for which stability results analogous to Theorem~\ref{thm:PhaseDecay} can be obtained can be expanded beyond that which is studied in this work. Hence, an interesting problem going forward would be determining non-constant functions $\omega$ that may still guarantee similar results to those of Theorem~\ref{thm:PhaseDecay}.

Another point to consider is that all initial conditions are taken from the Banach space $\ell^1$. The reason for this is that Hypothesis~\ref{hyp:LinearPhase} is stated in terms of initial conditions in $\ell^1$. It would be interesting to investigate how the results of Theorem~\ref{thm:PhaseDecay} change as one considers initial conditions in $\ell^p$, for various $p > 1$. What can be stated immediately is that the stability results fail when considering the full range of initial conditions in $\ell^\infty$. This is quite easy to see since $\ell^\infty$ contains the constant sequences indexed by $\mathbb{Z}^2$. To see this, denote $\mathbbm{1} = \{1\}_{(i,j)\in\mathbb{Z}^2}\in\ell^\infty$ to be the constant sequence of all ones in $\ell^\infty$ and note that for all $C \in \mathbb{R}$ we have that $(\bar{r}(\alpha),\bar{\theta}(\alpha) + C\mathbbm{1})$ is also a steady-state solution of (\ref{FullPolarLattice}) due to the fact that only the difference of nearest-neighbours in $\theta$ are present in the polar system. Hence, taking the initial condition $(s^0,\psi^0) = (0,C\mathbbm{1}) \in \ell^\infty\times\ell^\infty$ results in $\dot{s} = 0$ and $\dot{\psi} = 0$ for all $t \geq 0$, $C \in \mathbb{R}$ and $\alpha \in [0,\alpha^*]$, thus giving no decay back to the equilibrium $(s,\psi) = (0,0)$, regardless of how small $|C|$ is chosen. Therefore, it would be of great interest to see if the decay rates gradually weaken for initial conditions belonging to $\ell^p$ as $p$ increases, resulting in no decay in the limiting case of $p = \infty$. 

This work merely provides a necessary initial step in the study of stability in lattice dynamical systems through the use of traditional dynamical systems techniques. As this discussion has eluded to, there still remains a number of open problems related to this work. Away from the Lambda-Omega systems investigated here, the nonlinear stability of solutions to lattice dynamical systems remains one that is largely unexplored without the use of comparison theorems. Therefore, the hypotheses and techniques put forth in this manuscript could also be taken to inform future investigations into the local asymptotic stability of solutions to lattice dynamical systems.

\section*{Acknowledgements} 

This work was supported by an NSERC PDF held at Brown University.

\end{document}